\setlist[itemize]{leftmargin=*}
\setlist[enumerate]{leftmargin=*}
\def\co{\colon\thinspace}
\newcommand{\C}{\mathbb{C}}
\newcommand{\Z}{\mathbb{Z}}
\newcommand{\R}{\mathbb{R}}
\newcommand{\del}{\partial}
\newcommand{\M}{\mathcal{M}}
\newcommand{\cL}{\mathcal{L}}
\newcolumntype{C}[1]{>{\centering\arraybackslash$}p{#1}<{$}}
\newcommand{\pt}{\mathrm{pt}}
\newcommand{\BS}{\mathcal{BS}}
\renewcommand{\L}{\mathcal{L}}
\newcommand{\xx}{\mathbf{x}}
\newcommand{\dashover}[2][\mathop]{#1{\mathpalette\df@over{{\dashfill}{#2}}}}
\newcommand{\fillover}[2][\mathop]{#1{\mathpalette\df@over{{\solidfill}{#2}}}}
\newcommand{\df@over}[2]{\df@@over#1#2}
\newcommand\df@@over[3]{%
	\vbox{
		\offinterlineskip
		\ialign{##\cr
			#2{#1}\cr
			\noalign{\kern1pt}
			$\m@th#1#3$\cr
		}
	}%
}
\newcommand{\dashfill}[1]{%
	\kern-.5pt
	\xleaders\hbox{\kern.5pt\vrule height.4pt width \dash@width{#1}\kern.5pt}\hfill
	\kern-.5pt
}
\newcommand{\dash@width}[1]{%
	\ifx#1\displaystyle
	2pt
	\else
	\ifx#1\textstyle
	1.5pt
	\else
	\ifx#1\scriptstyle
	1.25pt
	\else
	\ifx#1\scriptscriptstyle
	1pt
	\fi
	\fi
	\fi
	\fi
}
\newcommand{\solidfill}[1]{\leaders\hrule\hfill}
\newcommand{\bM}{\overline{\mathcal{M}}}
\newcommand{\cR}{\mathcal{R}}
\newcommand{\en}{\bullet}
\newtheorem{theorem}{Theorem}[section]
\newtheorem{proposition}[theorem]{Proposition}
\newtheorem{lemma}[theorem]{Lemma}
\newtheorem{corollary}[theorem]{Corollary}
\newtheorem{conjecture}[theorem]{Conjecture}
\theoremstyle{definition}
\newtheorem{definition}[theorem]{Definition}
\theoremstyle{remark}
\newtheorem{remark}{Remark}[section]
\newtheorem{example}[remark]{Example}
\numberwithin{equation}{section}
\begin{document}

	\title[String topology and
	LG periods]{String topology with gravitational descendants,\\ and
		periods of Landau-Ginzburg potentials}
	
	\author{Dmitry Tonkonog}
	\let\thefootnote\relax\footnote{This work was partially supported by the Simons
		Foundation grant \#385573, Simons Collaboration on Homological Mirror
		Symmetry.}
	\address{University of California, Berkeley}
	
	\begin{abstract}
	This paper introduces new operations on the string topology of a smooth manifold: gravitational descendants of its cotangent bundle, which are augmentations of the Chas-Sullivan $L_\infty$ algebra structure of the loop space. The definition extends to Liouville domains. Descendants of the $n$-torus are computed.
	
	To a monotone Lagrangian torus in a symplectic manifold, one associates a Laurent polynomial called the Landau-Ginzburg potential, by counting holomorphic disks. This paper proves the following mirror symmetry prediction: the constant terms of the powers of an LG~potential are equal to descendant Gromov-Witten invariants of the ambient manifold.
	\end{abstract}
	\maketitle
	
	\section{Overview}

\subsection{Landau-Ginzburg potential}
Given a smooth Fano variety considered as a symplectic manifold, one question that can be asked about it is what monotone Lagrangian tori does it contain. This paper explains how Lagrangian tori shed light on the enumerative geometry of the Fano variety, and vice versa.

The interest in monotone Lagrangian tori  generally stems from mirror symmetry;  concretely, recent constructions of such reveal exciting algebraic and combinatorial patterns which seem to correctly capture the structure of cluster charts of the mirror variety. 
The constructions include those of Lagrangian tori in $\C P^2$ (indexed by Markov triples, i.e.~solutions of the Markov equation $a^2+b^2+c^2=3abc$) and  del Pezzo surfaces by Vianna \cite{Vi13,Vi14,Vi16}; tori in $\R^6$ by Auroux \cite{Au15}; and higher-dimensional mutations of Lagrangian tori in toric Fano varieties by Pascaleff and the author \cite{PT17}.

One is naturally interested in monotone Lagrangian tori up to Hamiltonian isotopy. Given such a torus $L\subset X$ with a fixed basis of $H_1(L;\Z)\cong \Z^n$, one defines the \emph{Landau-Ginzburg potential} of $L$  to be the Laurent polynomial
$$
W_L\in\C[x_1^{\pm 1},\ldots,x_n^{\pm 1 }],
$$
where $x_1,\ldots,x_n$ are formal variables associated with the basis of $H_1(L;\Z)$, in the following way. The LG potential counts $J$-holomorphic Maslov index~2 disks $(D,\del D)\subset(X,L)$ whose boundary passes through a specified point on $L$, for some fixed compatible almost complex structure $J$. There is a finite number of such disks;  the potential also retains the information about their boundary homology classes. Namely, to a disk $D$  with $[\del D]=(v^1,\ldots,v^n)\in\Z^n=H_1(L;\Z)$ one associates the monomial $\pm x_1^{v^1}\ldots x_n^{v^n}$, and $W_L$ is obtained as the sum of these monomials over all holomorphic disks as above; see e.g.~\cite{CO06,Au07,FO310}. The  signs arise from the orientation on the moduli space of holomorphic disks.

This definition is not specific to tori but the requirement that $L\subset X$ be \emph{monotone} is crucial: it guarantees that $W_L$ is invariant of the choice of~$J$ and Hamiltonian isotopies of $L$.
Recall that a Lagrangian submanifold $L\subset X$ is called monotone if the following two maps:
$$
\omega\co H_2(X,L;\Z)\to \R,\quad \mu\co H_2(X,L;\Z)\to \Z,
$$
the symplectic area and the Maslov index, are positively proportional to each other. This in particular implies that $\omega\in H^2(X;\Z)$ and $c_1(X)\in H^2(X;\Z)$ are positively proportional, that is, $X$ is a monotone symplectic manifold. Fano varieties constitute the main class of monotone symplectic manifolds.

\begin{example}
The product torus in $\R^4=\C^2(z_1,z_2)$ given by $\{|z_1|=r_1,\ |z_2|=r_2\}$ is monotone if and only if $r_1=r_2$. Its potential equals $W=x_1+x_2$.
\end{example}

\begin{example}
	\label{ex:toric_pot}
Let $X$ be a (smooth) compact toric Fano variety, $\Delta$ its moment polytope and $\pi\co X\to \Delta$ the moment map. The preimages of interior points of $\Delta$ are Lagrangian tori in $X$, and there is a unique point in $\Delta$ (the origin using the common normalisation of $\pi$) whose preimage is monotone.  The potential of this monotone torus has been computed by Cho and Oh \cite{CO06}: it is equal to the standard Hori-Vafa potential \cite{HV00}, cf.~\cite{Bat94}. For example, for $\C P^2$ this is $x_1+x_2+{x_1}^{-1}x_2^{-1}$.
\end{example}

\subsection{Quantum periods theorem}
Let $W\in \C[x_1^{\pm 1},\ldots,x_n^{\pm 1}]$ be a Laurent polynomial. Its \emph{$d$th period} 
$
\phi_d(W)\in\C
$
is the constant term of the $d$th power $W^d$. 

\begin{example}
One has $\phi_3(x_1+x_2+x_1^{-1}x_2^{-1})=6$.
\end{example}

Let $X$ be a compact monotone symplectic manifold.
The \emph{$d$th quantum period of $X$} is the number  $$\langle\psi_{d-2}\,\pt\rangle_{X,d}\in\mathbb{Q}.$$ It is the gravitational descendant one-pointed Gromov-Witten invariant of the point class $\pt \in H^{2n}(X)$, defined by the formula
\begin{equation}
\label{eq:psi_class}
\langle\psi_{d-2}\,\pt\rangle_{d}\coloneqq \int_{[\bM_1(X,d)]}c_1(\L)^{d-2}\cup ev^*([pt]).
\end{equation}
See Section~\ref{sec:cm} or  \cite{CoKa99,Giv94,Giv96} for a reminder.

\begin{theorem}[Quantum periods theorem]
	\label{th:lg_per}
	Let $X$ be a closed monotone symplectic $2n$-manifold and $L\subset X$ a monotone Lagrangian torus with LG potential $W_L\in\C[x_1^{\pm 1},\ldots, x_n^{\pm 1}]$. Then for all $d\ge 2$,
	$$
	 \phi_d(W_L)=d!\, \langle\psi_{d-2}\,\pt\rangle_{X,d}.
	$$
\end{theorem}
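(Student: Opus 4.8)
\noindent\emph{Proof proposal.}
The plan is to route both sides through the gravitational descendant operations on the free loop space that this paper constructs. First I would unwind the left-hand side. Writing $W_L=\sum_j c_j\,x^{v_j}$ with distinct $v_j\in\Z^n=H_1(L;\Z)$, where $c_j=\sum_{[\del D]=v_j}\epsilon(D)$ is the signed count of Maslov-$2$ disks through a generic $p\in L$ in class $v_j$, the period $\phi_d(W_L)$ is the constant term of $W_L^d$, i.e.\ the signed count of \emph{ordered} $d$-tuples $(D_1,\dots,D_d)$ of such disks with $\sum_k[\del D_k]=0$ in $H_1(L;\Z)$. This already explains the factor $d!$: it records the orderings of the disks in a configuration, the same normalization that relates a gravitational descendant invariant to an honest enumerative count.

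Second — the analytic core — I would (i) produce the Borman--Sheridan class $\BS\in SH^0(X)$ from the disk counts, together with the gravitational descendant operation $q_d$ (the $d$-ary piece of an augmentation of the Chas--Sullivan $L_\infty$ structure, encoding the $S^1$-action/$\psi$-decoration on moduli of punctured spheres); and (ii) show, by stretching the neck of $X$ along $\del D^*L$ for a disk cotangent neighbourhood $D^*L\subset X$ of $L$, that applying $q_d$ to $\BS^{\ot d}$ computes $d!\,\langle\psi_{d-2}\,\pt\rangle_{X,d}$. The exponent $d-2$ is forced by dimension: with the monotone normalization $\langle c_1,\beta\rangle=d$, the space $\bM_1(X,d)$ has complex dimension $n+d-2$, the point constraint removes $n$, and $c_1(\L)^{d-2}$ caps the rest. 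In the stretched limit the curve contributing to $\langle\psi_{d-2}\,\pt\rangle_{X,d}$ breaks into $d$ top pieces in $\widehat{X\setminus D^*L}$ — exactly the non-compact halves of the Maslov-$2$ disks — glued to a single bottom piece in $T^*L=\widehat{D^*L}$ carrying the marked point and the $\psi$-class; the count of the bottom piece is, by definition and by Viterbo functoriality $SH^*(X)\to SH^*(D^*L)\cong H_*(\L L)$, the descendant $q_d$ of $T^*L$ evaluated on the loop class $\sum_j c_j[\gamma_{v_j}]$ read off from the disk boundaries (closed geodesics of $T^n$).

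Third, I would compute inside $H_*(\L T^n)$: the operation $q_d$ on $\big(\sum_j c_j[\gamma_{v_j}]\big)^{\ot d}$ should vanish on every term with $\sum_k v_{j_k}\neq 0$ (the balancing/cycle condition for curves in $D^*T^n\simeq(\C^*)^n$) and, on the surviving terms, reduce to a single integral of a $\psi$-power over a Deligne--Mumford space, $\int_{\bM_{0,d+1}}\psi^{d-2}=1$. Summing $c_{j_1}\cdots c_{j_d}$ over the admissible ordered tuples then returns $\phi_d(W_L)$, completing the chain
$\phi_d(W_L)=q_d\big((\textstyle\sum_j c_j[\gamma_{v_j}])^{\ot d}\big)=q_d(\BS^{\ot d})=d!\,\langle\psi_{d-2}\,\pt\rangle_{X,d}$, where the middle equality uses that Viterbo restriction intertwines the descendant operations and sends $\BS$ to $\sum_j c_j[\gamma_{v_j}]$.

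The principal obstacle is the second step: the descendant enhancement of $\BS$ and of Viterbo functoriality. One must construct the operations $q_d$ with their $\psi/S^1$-structure on the correct compactified moduli $\bbM$ of punctured spheres, with transversality sufficient (virtual perturbations, or an ad hoc argument exploiting monotonicity and the genericity of $p$) to rule out sphere bubbling, multiply covered Reeb orbits, and other degenerate breakings; prove that Viterbo restriction is a morphism of these descendant structures and pins down the image of the disk counts — a delicate SFT gluing argument that must in particular show the $\psi$-decoration concentrates on the component falling into $D^*L$; and control orientations so that the signs of the glued configurations match the $\epsilon(D)$ in $W_L$. The torus computation in step three, once the descendants of $T^n$ are in hand, should by contrast be essentially self-contained.
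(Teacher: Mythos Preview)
Your overall architecture matches the paper's: stretch the closed-string descendant around a Weinstein neighbourhood of $L$, identify the $d$ outer planes with the Borman--Sheridan class (equal to $W_L$), identify the inner piece with a descendant operation on $T^*T^n$, and compute the latter. You also correctly locate step~(ii) as the main technical obstacle. But the proposal misjudges where the remaining work lies and has a couple of concrete errors.

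First, $\BS$ is not an element of $SH^0(X)$---that group is not even defined for closed $X$. In the paper $\BS$ lives in $SH^0_{S^1,+}(T^*L)$, the positive $S^1$-equivariant symplectic cohomology of the Weinstein neighbourhood, and Viterbo restriction plays no role. The equivariant version is not cosmetic: the descendant augmentations $\psi^k_{m-1}$ exist only on the equivariant complex.

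Second, the paper does \emph{not} attempt to transport the $\psi$-class abstractly through the stretching. It first converts $\langle\psi_{d-2}\,\pt\rangle_{X,d}$ into a zero-dimensional count, the \emph{enumerative descendant} $\langle\psi_{d-2}\,\pt\rangle^\en_{X,d}$, by imposing a tangency of order $d-1$ to a local hypersurface germ $Y\ni y$ \emph{together with} a Hamiltonian perturbation, supported on an annulus around the tangency marked point, whose vector field is transverse to $Y$. The perturbation is the key device: it forbids constant bubbles at the tangency point, which is exactly what makes the jet-section comparison $\langle\psi_{d-2}\,\pt\rangle^\en=(d-2)!\,\langle\psi_{d-2}\,\pt\rangle$ go through. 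Without it one lands on the Cieliebak--Mohnke tangency invariants $\langle\tau_{d-2}\,\pt\rangle$, which differ from the $\psi$-invariants. Your phrase ``the $\psi$-decoration concentrates on the component falling into $D^*L$'' is precisely the step this mechanism is designed to make rigorous; after the reformulation, the inner piece is again a zero-dimensional count of curves in $T^*L$ carrying the same Hamiltonian-perturbed tangency condition.

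Third, and most importantly, you badly underestimate the torus computation. It is not ``essentially self-contained'' and it does not reduce to $\int_{\bM_{0,d+1}}\psi^{d-2}=1$. With the paper's (tangency-based) definition the descendant on a balanced tuple $(v_1,\dots,v_d)$ evaluates to $(d-2)!$, and there is no direct identification of the relevant moduli space in $T^*T^n$ with $\bM_{0,d+1}$ once the orbits and the tangency condition have been perturbed. The paper treats this as the core computation: it first derives, from the $L_\infty$-augmentation relations combined with an explicit calculation of the Chas--Sullivan bracket on $\L T^n$, a linear identity
\[
\sum_{i=1}^k \Omega(u,v_i)\,\langle \xx^{v_1}|\ldots|\xx^{v_i+u}|\ldots|\xx^{v_k}\rangle=0
\]
for all $\Omega\in\Lambda^2\Z^n$ and $u\in\Z^n$; then, using this relation together with dimensional stabilisation and $GL(n,\Z)$-invariance, it runs an inductive reduction of an arbitrary balanced tuple to the tuples arising from the Clifford torus in a product $\prod_j\C P^{k_j}$; finally those base cases are read off from the known descendant invariants $\langle\psi_{k-2}\,\pt\rangle_{\prod\C P^{k_j},\beta}=1$ via the stretching identity itself. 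Your heuristic that curves in $(\C^*)^n$ with prescribed asymptotics are parametrised by domain moduli is appealing for the integrable $J$, but turning it into a proof (transversality, compactification, compatibility with the Hamiltonian perturbation) is neither addressed nor how the paper proceeds.
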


\begin{corollary}
	\label{cor:per_eq}
	Let $X$ be a monotone symplectic manifold. For each $d$, the $d$th periods of the LG~potentials of all monotone Lagrangian tori in $X$ are equal to each other.\qed
\end{corollary}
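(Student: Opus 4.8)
The plan is to deduce this directly from Theorem~\ref{th:lg_per}, with essentially no further work. Fix the monotone symplectic manifold $X$ and an integer $d\ge 2$. Let $L,L'\subset X$ be any two monotone Lagrangian tori, equipped with (arbitrary) bases of $H_1(L;\Z)$ and $H_1(L';\Z)$, and let $W_L\in\C[x_1^{\pm1},\ldots,x_n^{\pm1}]$ and $W_{L'}\in\C[x_1^{\pm1},\ldots,x_n^{\pm1}]$ be their Landau--Ginzburg potentials. Applying Theorem~\ref{th:lg_per} to $L$ and then to $L'$ gives
$$
\phi_d(W_L)=d!\,\langle\psi_{d-2}\,\pt\rangle_{X,d}=\phi_d(W_{L'}),
$$
since the right-hand side $d!\,\langle\psi_{d-2}\,\pt\rangle_{X,d}$ is the $d$th quantum period of $X$, which is intrinsic to $X$ and $d$ and does not reference any Lagrangian.

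For $d=0$ one has $\phi_0(W_L)=1$ for every Laurent polynomial, and for $d=1$ the constant term $\phi_1(W_L)$ vanishes for every LG potential of a monotone torus (there is no Maslov index~$2$ disk with trivial boundary class), so the statement is trivially true in those degrees as well; hence it holds for all $d$. The only point worth remarking is that the conclusion is independent of the chosen $H_1$-bases: changing a basis of $H_1(L;\Z)$ by an element of $GL_n(\Z)$ applies a monomial change of variables $x_i\mapsto \prod_j x_j^{a_{ij}}$ to $W_L$, and such a substitution preserves the constant term of every power, so $\phi_d(W_L)$ is well defined independently of the basis. Thus the periods of the LG potentials of all monotone Lagrangian tori in $X$ coincide, which is the assertion.

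There is no real obstacle here: the entire content has been placed in Theorem~\ref{th:lg_per}, whose right-hand side is manifestly $L$-independent, and the corollary is the observation that a quantity equal to an $L$-independent number is itself $L$-independent.
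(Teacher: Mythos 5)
Your proposal is correct and takes exactly the approach the paper intends: the corollary is stated with \qed because it is an immediate consequence of Theorem~\ref{th:lg_per}, whose right-hand side $d!\,\langle\psi_{d-2}\,\pt\rangle_{X,d}$ depends only on $X$ and $d$. Your extra remarks on $d\in\{0,1\}$ and on $GL_n(\Z)$-invariance of periods are accurate but not required.
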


\subsection{Tangential and descendant invariants}
It is important to distinguish descendant invariants from related, yet different ones: relative Gromov-Witten invariants, or Gromov-Witten invariants with tangency conditions.
Specifically, consider the following invariant. Fix a point $y\in X$, a compatible almost complex structure on $X$ which is integrable in a neighbourhood $U$ of $y$, and a germ $Y\subset U$ of a $J$-complex hypersurface passing through $y$. Following Cieliebak and Mohnke \cite{CM14}, one defines the \emph{Gromov-Witten invariant with a tangency}
$$
\langle\tau_{d-2}\,\pt\rangle_{X,d}\in\Z
$$
to be the count of $J$-holomorphic Chern number~$d$ spheres in $X$ which pass through $y\in X$ and have intersection multiplicity $d-1$ with $Y$ at that point. See Section~\ref{sec:cm} for more details. 

Importantly, the relation between $\langle \psi_{d-2}\, \pt\rangle_{X,d}$ and $\langle \tau_{d-2}\, \pt\rangle_{X,d}$ is complicated. GW invariants with a tangency have been recently revisited by Siegel \cite{Sie19}; among the interesting computations made in that paper are the invariants of $\C P^2$, see \cite[Section~5.5]{Sie19}. Section~\ref{sec:cm} expands on the relation and the difference between the $\psi$- and $\tau$-versions of the invariant.

While $\langle\tau_{d-2}\,\pt\rangle_{X,d}$ is defined by counting the elements of a zero-dimensional moduli space, the definition of $\langle\psi_{d-2}\,\pt\rangle_{X,d}$ does not have that form.
For the proof of Theorem~\ref{th:lg_per}, it is convenient to have an alternative reformulation of $\langle\psi_{d-2}\,\pt\rangle_{X,d}$ as a zero-dimensional counting problem. 
Section~\ref{sec:cm} introduces such a problem, called enumerative descendants, and $$\langle\psi_{d-2}\,\pt\rangle_{X,d}^\en$$ stands for corresponding count.
The following comparison is proved in Section~\ref{sec:cm}.

\begin{lemma}
	\label{lem:desc}
	For any monotone symplectic manifold $X$ and any $d\ge 2$
	it holds that $$\langle\psi_{d-2}\,\pt\rangle_{X,d}^\en=(d-2)!\cdot  \langle\psi_{d-2}\,\pt\rangle_{X,d}.$$
\end{lemma}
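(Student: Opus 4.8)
The plan is to relate the two descendant invariants by comparing the two moduli-theoretic constructions underlying them. On one side, $\langle\psi_{d-2}\,\pt\rangle_{X,d}$ is defined via intersection theory on the compactified moduli space $\bM_1(X,d)$ of stable genus-zero maps with one marked point, integrating the top power $c_1(\L)^{d-2}$ of the psi-class (the cotangent line at the marked point) against $\ev^*[\pt]$. On the other side, $\langle\psi_{d-2}\,\pt\rangle_{X,d}^\en$ is the enumerative reformulation introduced in Section~\ref{sec:cm}: a count of a zero-dimensional moduli space, which (as is standard for turning $\psi$-classes into enumerative problems) should be built from maps with $d-1$ marked points $z_1,\dots,z_{d-1}$, all constrained to map to a common point $y\in X$, together with a chosen representative of $c_1(\L)$ at each of them — e.g.\ requiring the tangent directions at the $z_i$ to lie in prescribed hyperplanes, or equivalently cutting down by a transverse section of $\L$ at each marked point. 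The heart of the argument is the classical comparison between $\psi_1^{k}$ on $\bM_{0,1}$ and the pushforward from $\bM_{0,k+1}$ of the product $\psi_1\cdots\psi_{k+1}$ (or the appropriate variant), whose multiplicity is exactly $k!$.

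Concretely, I would proceed as follows. First, recall/set up the forgetful map $\pi\co \bM_{d-1}(X,d)\to \bM_1(X,d)$ that remembers only the first marked point (and stabilizes), and the standard relation on $\bM_{d-1}(X,d)$ expressing each $\psi_{z_i}$ in terms of the pulled-back psi-class $\pi^*\psi$ and boundary divisors $D_{1,i}$ where $z_1$ and $z_i$ collide. Second, observe that after imposing the point constraint $\ev_i = y$ at all $d-1$ marked points (via $\ev^*[\pt]$ at $z_1$, which by the diagonal/point condition forces all bubbles carrying the other marked points to also pass through $y$ or be constant), the relevant boundary contributions either vanish for dimension reasons or are absorbed, so that the product $\psi_{z_1}\psi_{z_2}\cdots\psi_{z_{d-1}}$ reduces, up to the combinatorial factor, to $\pi^*(c_1(\L)^{d-2})$. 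Third, push forward along $\pi$: the fiber of $\pi$ over a generic stable map in $\bM_1(X,d)$ is (a compactification of) the configuration space of $d-2$ additional distinct points on the domain $\C P^1$ minus $z_1$; integrating $\psi_{z_2}\cdots\psi_{z_{d-1}}$ over this fiber produces exactly the combinatorial constant. The count of ways to order the $d-2$ auxiliary marked points, which are a priori labelled in the enumerative problem but unordered in the intersection-theoretic definition, yields the factor $(d-2)!$; assembling these pieces gives $\langle\psi_{d-2}\,\pt\rangle_{X,d}^\en=(d-2)!\cdot\langle\psi_{d-2}\,\pt\rangle_{X,d}$.

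I expect the main obstacle to be analytic rather than combinatorial: namely, justifying that the zero-dimensional enumerative problem defining $\langle\psi_{d-2}\,\pt\rangle_{X,d}^\en$ is genuinely transversely cut out and that its signed count agrees with the virtual/intersection-theoretic computation on $\bM_1(X,d)$. In the monotone setting one has enough positivity that stable maps in the relevant homology class are somewhere injective and the compactified moduli spaces have no sphere bubbling in codimension one that would spoil the count, so standard transversality (after generic perturbation of $J$ and of the auxiliary constraint data representing $c_1(\L)$) should apply; but one must check that the chosen geometric representatives of the psi-classes — e.g.\ the choice of hyperplane conditions on the tangent lines at the marked points, or equivalently the normal-crossing divisor / section data — are compatible with the Gromov compactification and contribute the correct boundary signs. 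The combinatorial factor $(d-2)!$ itself is robust and follows from the fiber integral $\int_{\overline{M}_{0,d-1}/\text{point}}\psi_2\cdots\psi_{d-2} = (d-3)!$-type identities together with the relabelling count; the subtlety is entirely in matching the two moduli-theoretic frameworks. Once that matching is in place, the identity follows by comparing how the single class $c_1(\L)^{d-2}$ on the one-pointed space lifts to a product of $(d-2)$ individual cotangent-line constraints on the $(d-1)$-pointed space.
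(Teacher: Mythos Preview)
Your proposal rests on a misreading of the definition of the enumerative descendant. In this paper, $\langle\psi_{d-2}\,\pt\rangle_{X,d}^\en$ is \emph{not} built from $d-1$ marked points all mapping to the same point $y$; it is defined in (\ref{eq:def_psi_t}) as $\tfrac{1}{(dN)!}$ times the count of the moduli space (\ref{eq:m_psi}). That moduli space has a \emph{single} marked point $z_0$ with $u(z_0)=y$ carrying a tangency condition of order $d-1$ to a local hypersurface germ $Y\ni y$, together with $dN$ auxiliary marked points constrained to a Donaldson divisor $\Sigma$ (these serve only to stabilise the domain), and a Hamiltonian perturbation supported in an annulus around $z_0$ with $X_H$ transverse to $Y$. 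None of the forgetful-map / psi-class comparison machinery you outline is relevant to this definition.

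The paper's actual proof, following Gathmann, uses that the $m$th normal jet to $Y$ at $y$ furnishes a section $\sigma_m$ of $\L^{\otimes m}$ over the moduli space $\M(m)$ of curves with tangency order $\ge m$, whose zero locus is $\M(m+1)$. The key analytic input is Lemma~\ref{lem:no_const_bub}: the Hamiltonian perturbation forbids constant bubbles at the tangency point, so the zero locus of $\sigma_m$ really is $\M(m+1)$ and nothing else. Iterating, the vanishing locus $\sigma_1^{-1}(0)\cap\cdots\cap\sigma_{d-2}^{-1}(0)$ computes $[\M(d-1)]$; since $\sigma_i$ is a section of $\L^{\otimes i}$, this equals $1\cdot 2\cdots(d-2)\cdot c_1(\L)^{d-2}\cap[\M(1)]=(d-2)!\cdot c_1(\L)^{d-2}\cap[\M(1)]$. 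The factor $(d-2)!$ is thus the product of the tensor powers, not a marked-point relabelling count.

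So the gap is fundamental: your argument addresses a different (and in this paper, undefined) invariant. To fix it you would need to start from the tangency formulation, invoke the jet sections $\sigma_m$, and use Lemma~\ref{lem:no_const_bub} to rule out the boundary strata that would otherwise spoil the identification of zero loci with higher-tangency spaces.
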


One can try to compare  $\langle \psi_{d-2}\, \pt\rangle_{X,d}$ with $\langle \tau_{d-2}\, \pt\rangle_{X,d}$ as follows, see Section~\ref{sec:cm} for details. On the moduli space of holomorphic spheres in $X$ passing through $y\in X$, consider the higher jets of these curves at $y$ in the direction normal to a local divisor $Y\ni y$. (These jets are defined whenever the lower-order ones vanish). On one hand, the vanishing loci of the jets cut out moduli spaces of curves with increasing tangencies to $Y$, and on the other hand, these jets are sections of the powers of $\cL$, the line bundle appearing in the definition of the $\psi$-invariants. 

Section~\ref{sec:cm} reminds that
this observation does not directly translate to an identity between the $\tau$- and $\psi$-invariants because of bubbled configurations with a {\it constant} holomorphic bubble inheriting the tangency condition. To remedy this, Section~\ref{sec:cm} introduces a modified problem where the curves carry a Hamiltonian perturbation near the tangency point to $Y$, with Hamiltonian vector field transverse to $Y$. These moduli spaces can no longer develop constant bubbles at the tangency point (Lemma~\ref{lem:no_const_bub}). They are used to define the enumerative descendants $\langle\psi_{d-2}\,\pt\rangle_{X,d}^\en$.

Later, the proof of Theorem~\ref{th:lg_per} actually goes by showing that
$$
\phi_d(W_L)=d(d-1)\cdot \langle\psi_{d-2}\,\pt\rangle_{X,d}^\en.
$$

The background placing Theorem~\ref{th:lg_per} and Corollary~\ref{cor:per_eq} into context will be surveyed soon, along with their applications. 
The following generalisation of Theorem~\ref{th:lg_per} is  proved as well.

\begin{theorem}
	[=Theorem~\ref{th:gw_bs}]
	\label{th:gw_s_intro}
	Let $X$ be a closed monotone symplectic manifold, and $M\subset X$ a monotone Liouville subdomain  admitting a non-negatively graded Floer complex, and a suitable Donaldson divisor in its complement.
	Then
	$$
	\langle\underbrace{ \BS|\ldots|\BS}_d
	\rangle_M=d!\,\langle\psi_{d-2}\,\pt\rangle^\en_{X,d}.
	$$
\end{theorem}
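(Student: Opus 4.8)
The plan is to degenerate the closed manifold $X$ along the contact boundary $\partial M$ and to interpret the resulting broken holomorphic curves in two complementary ways: read from the outside they compute the enumerative descendant $\langle\psi_{d-2}\,\pt\rangle^\en_{X,d}$, and read from the inside they compute the string topology descendant operation $\langle\BS|\ldots|\BS\rangle_M$. Here $D\subset X\setminus M$ denotes the suitable Donaldson divisor, and $\hat M$, $\widehat{X\setminus M}$ are the completions of the two pieces produced by the neck stretch.

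First I would analyse the outside. By construction (Section~\ref{sec:cm}) the invariant $\langle\psi_{d-2}\,\pt\rangle^\en_{X,d}$ is a perturbed, zero-dimensional count of Chern number $d$ rational curves through a point $y$ having maximal tangency to a local divisor at $y$; since this count is independent of the auxiliary choices, I would place $y$ and the local divisor near $D$, adapted to it, so that the tangency data becomes intersection data with $D$. Stretching the neck along $\partial M$ and applying SFT compactness, a sequence of such curves converges to a broken configuration: a genus-zero curve $u_\infty$ in $\hat M$ with $d$ punctures asymptotic to Reeb orbits on $\partial M$, glued along these punctures to $d$ ``caps'' $v_1,\dots,v_d$ in $\widehat{X\setminus M}$, each meeting $D$ at a single point. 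The caps absorb the point and tangency constraints near $D$, while the gravitational descendant $\psi^{d-2}$ (equivalently, the normal jet data along $D$) survives on $u_\infty$. Two points must be settled here: that ``suitable'' forces every $v_i$ to be a minimal disk meeting $D$ once, with no higher degree and no multiple covers; and that the monotonicity of $M$ together with the non-negative grading of $SC^*(M)$ rules out all further breaking of $u_\infty$ by an index count. Lemma~\ref{lem:no_const_bub} keeps the count well defined in the presence of the Hamiltonian perturbation.

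Next I would match this with the inside. The Borman--Sheridan class $\BS\in SH^0(M)$ is by construction represented by the count of exactly the minimal caps in $\widehat{X\setminus M}$ meeting $D$ once, and the string topology descendant $\langle c_1|\ldots|c_d\rangle_M$ is defined in the earlier sections as the count of a moduli space of genus-zero curves in $\hat M$ with $d$ punctures decorated by cycles representing the $c_i$ and carrying the descendant insertion $\psi^{d-2}$. Setting all $c_i=\BS$ therefore counts precisely the configurations $u_\infty\cup v_1\cup\ldots\cup v_d$ found above, so the two moduli problems agree up to a combinatorial factor coming from the labelling of the $d$ insertions (the punctures, equivalently the $d$ points of intersection with $D$) and from the normalisation of the $\psi^{d-2}$ descendant. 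Tracking these carefully, the factor works out to $d!$ once all inputs are set equal to $\BS$, which gives $\langle\BS|\ldots|\BS\rangle_M=d!\,\langle\psi_{d-2}\,\pt\rangle^\en_{X,d}$.

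The main obstacle is the compactness-and-gluing package for the neck-stretched moduli space, together with the chain-level compatibility of the descendant constraint across the neck. One must show that the only limits are the expected ``core plus $d$ minimal caps'' configurations --- excluding multiply covered caps, extra plane or sphere bubbles on either level, and limits in which the class $\psi^{d-2}$ collapses onto a bubble component --- which is exactly what the hypotheses of monotonicity, non-negative grading and suitability of $D$ are there to ensure; and, conversely, that each such broken configuration is regular and glues back to a genuine curve in $X$ contributing with the correct sign. The delicate technical point is that the string topology descendant operation was constructed in the earlier sections precisely so that its $\psi$-insertions match the normal jet (increasing tangency) conditions along $D$ on the closed side; making this identification precise at the level of virtual fundamental chains, compatibly with the gluing maps and the Hamiltonian perturbation of Lemma~\ref{lem:no_const_bub}, is the heart of the argument.
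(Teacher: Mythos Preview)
Your overall strategy---neck-stretch along $\partial M$ and identify the resulting building with a descendant curve in $M$ capped by Borman--Sheridan planes in $X\setminus M$---is exactly the paper's approach. However, the setup you describe contains a genuine error that would make the argument fail as written.

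You place the point $y$ and the local divisor ``near $D$, adapted to it,'' and say the caps $v_i$ ``absorb the point and tangency constraints near $D$.'' This is backwards. The point $y$ must lie \emph{inside} $M$, because the descendant operation $\langle\,\cdot\,|\ldots|\,\cdot\,\rangle_M$ is defined (see~(\ref{eq:m_gr_sh})) by curves $u\co D\to M$ satisfying $u(z_0)=y$ with a tangency to a germ $Y\ni y$ in $M$. After stretching, it is the \emph{inner} curve $u_\infty\subset \hat M$ that inherits the marked point $z_0$, the tangency condition to $Y$, and the Hamiltonian-perturbed annulus; the outer caps $v_i\subset \widehat{X\setminus M}$ carry none of this. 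You also conflate two different divisors: the germ $Y$ at $y\in M$ which encodes the $\psi$-tangency, and the Donaldson divisor $\Sigma\subset X\setminus M$ (your $D$), whose only role is to stabilise the domain via the auxiliary marked points $z_1,\ldots,z_{Nd}$ in~(\ref{eq:m_psi}). These $Nd$ marked points, not the tangency, are what get distributed among the caps---$N$ to each, since $\Sigma$ is dual to $Nc_1(X)$, not one---and the combinatorics of this distribution is what produces the normalising factors $\tfrac{1}{(Nd)!}\binom{Nd}{N,\ldots,N}(N!)^d$ which collapse to $1$, leaving only the $\tfrac{1}{d!}$ from reordering the inputs.

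With $y$ placed in the wrong piece, the index argument you sketch (non-negative grading forcing exactly $d$ punctures on $u_\infty$, monotonicity forcing each cap to have $c_1^{rel}=1$) cannot be run: the Fredholm index computation in Subsection~\ref{subsec:CM} relies on $u_\infty$ carrying both the $2n$-codimensional point constraint and the $2(d-2)$-codimensional tangency. Once you move $y$ inside $M$ and separate the roles of $Y$ and $\Sigma$, your outline matches the paper's proof in Section~\ref{sec:pr}.
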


Here is a quick explanation of the statement, without going into the forthcoming details. A \emph{monotone}  Liouville domain $M\subset X$ is a notion generalising the Weinstein neighbourhood of a monotone Lagrangian submanifold; the definition is  simple but  does not seem to have  appeared in the literature before. The \emph{Borman-Sheridan class} $\BS\in SH^0_{S^1,+}(M)$ accordingly generalises the Landau-Ginzburg potential. Here $SH^0_{S^1,+}(M)$  is the positive equivariant symplectic cohomology, graded so that the Viterbo isomorphism reads $SH^*_{S^1,+}(T^*L)\cong H_{n-*-1}(\L L/S^1,L)$. For example, $SH^0_{S^1,+}(T^*T^n)\cong\C[x_1^{\pm1},\ldots,x_n^{\pm 1}]$ as vector spaces (recall that the left hand side is not a ring).

\begin{remark}
	The Borman-Sheridan class has a non-equivariant version, but the statement Theorem~\ref{th:gw_s_intro} needs the equivariant one. 
\end{remark}

One says that $M$ has \emph{non-negatively graded Floer complex} if $\del M$ admits a contact structure such that the Floer complex $CF^*_{S^1,+}(M)$ computing the symplectic cohomology is concentrated in non-negative degrees up to an  arbitrarily high action truncation: for example, cotangent bundles of manifolds admitting  a metric with non-positive sectional curvature (like the $n$-torus) have this property. 

Finally, the brackets from the left hand side of the identity from Theorem~\ref{th:gw_s_intro} are gravitational descendant operations 
$$
\langle\, \cdot\, | \ldots |\, \cdot \, \rangle_M\co SH^0_{S^1,+}(M)^{\otimes d}\to \C
$$
introduced in this paper.
These operations are only defined at the cohomology level   when $M$ admits  non-negatively graded Floer complex. 
The main step towards the proof of Theorem~\ref{th:lg_per} is to compute descendants of $M=T^*T^n$; the statement is found in
Theorem~\ref{th:m_comput} and the proof occupies Section~\ref{sec:torus}.

For a general Liouville domain, gravitational descendants are chain-level operations; they are quickly overviewed next.

\subsection{Gravitational string topology}
Let $M$ be a Liouville domain (see the references in Section~\ref{sec:bs});  the cotangent bundle $M=T^*L$ of a smooth manifold $L$ is  already a very interesting case for this discussion.
 The Floer chain complex $CF^*(M)$ computing symplectic cohomology has a natural structure of an $L_\infty$ algebra. 
 
 The {\it positive equivariant} Floer complex $CF^*_{S^1,+}(M)$ is more relevant for the present paper. It also has an $L_\infty$ algebra structure: this means that there is a sequence of operations
 $$
 l^k\co CF^*_{S^1,+}(M)^{\otimes k}\to CF^{*+3-2k}_{S^1,+}(M)
 $$
 satisfying the $L_\infty$ relations appearing in Section~\ref{sec:grav}. 
 
 For each $m,k\ge 1$,  this paper introduces a \emph{gravitational descendant} operation which is a degree $2-2k$ map
 $$
 \psi^k_{m-1}\co CF^*_{S^1,+}(M)^{\otimes k}\to \C[-2m]
 $$
 where $\C[-2m]$ is a copy of $\C$ of grading $-2m$.
 In other words the operation $\psi^k_{m-1}$ vanishes unless the degrees of the inputs sum to $2k-2-2m$, and otherwise returns a  number. This is the point where using the equivariant Floer complex is important: this augmentation does not have a non-equivariant analogue.
 
 \begin{figure}[h]
 	\includegraphics[]{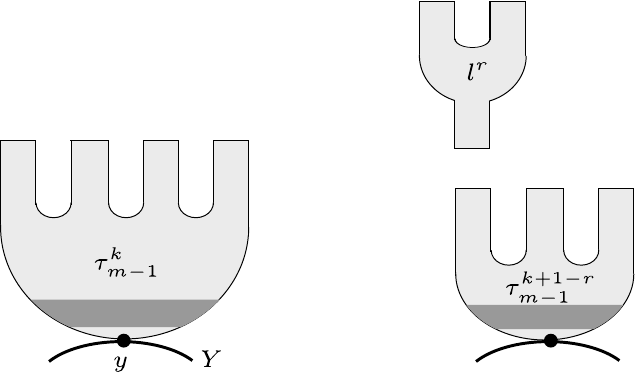}
 	\caption{Left: a gravitational descendant operation. The shaded region contains a Hamiltonian perturbation. Right: the bubbling showing part of the $L_\infty$ augmentation relation ($k=4$, $r=2$).}
 	\label{fig:t_l_inf}
 \end{figure}
 
 Here is a rough definition of $\psi^k_{m-1}$. These operations count holomorphic maps $\C P^1\setminus\{z_1,\ldots,z_k\}\to M$ with an additional marked point $z_0\in \C P^1$, asymptotic to the given input orbits  $x_i\in CF^*_{S^1,+}(M)$ at the punctures $z_i$, and passing through a specified point $y\in M$ at $z_0$ with intersection multiplicity $m$ with a fixed germ $Y$ of a complex hypersurface. 
 The last condition is analogous to the one from the  definition of $\langle\psi_{d-2}\,\pt\rangle_d^\en$; see Figure~\ref{fig:t_l_inf}, left. Similarly to the definition of $\langle\psi_{d-2}\,\pt\rangle_d^\en$, the curves defining $\psi^k_{m-1}$ carry a Hamiltonian perturbation near the tangency point. See Section~\ref{sec:grav} for the details.

 The stated degree of $\psi^k_{m-1}$ means that one is counting the moduli spaces which are zero-dimensional. Taking the boundary of 1-dimensional moduli spaces defined in the same way, see Figure~\ref{fig:t_l_inf}, right, one obtains the following identities between gravitational descendants and the $L_\infty$ structure:
 $$
 \sum_{\stackrel{1\le r\le k,}{\sigma\in S_k}}(-1)^\maltese\, \tfrac{1}{r!(k-r)!}\, \psi^{k+1-r}_{m-1}(l^{r}(x_{\sigma_1},\ldots x_{\sigma_{r}}),x_{\sigma_{r+1}},\ldots, x_{\sigma_{k}})=0.
 $$
These identities may be rephrased to say that for each $m\ge 1$, the collection of maps $\psi_{m-1}=\{\psi_{m-1}^k\}_{k\ge 1}$ is an \emph{$L_\infty$ morphism} from the $L_\infty$ algebra $CF^*_{S^1,+}(M)$ to the shifted one-dimensional vector space $\C[-2m]$ considered as the trivial $L_\infty$ algebra.
  Yet another equivalent formulation is that 
  $\psi_{m-1}=\{\psi_{m-1}^k\}_{k\ge 1}$ is  a shifted \emph{augmentation} of the $L_\infty$ algebra $CF^*_{S^1,+}(M)$.

\begin{remark}
Siegel \cite{Sie19} studied a version of these operations without using a Hamiltonian perturbation; call these operations $\tau^{k}_{m-1}$. They are different from the ones defined in this paper: generally, $\tau^{k}_{m-1}\neq \psi^{k}_{m-1}$. However, both give (different) $L_\infty$ augmentations of the same equivariant Floer complex.  The reason for the difference between $\tau^{k}_{m-1}$ and $\psi^{k}_{m-1}$ is similar to the closed-string case. Briefly, when one turns off the Hamiltonian perturbation used in the definition of the ``$\psi$-curves'' to zero, these curves may converge to nodal unions of the ``$\tau$-curves'' rather than a just a single ``$\tau$-curve''.
\end{remark}
  
  Let $L$ be a smooth orientable spin manifold, and $\L L$ its free loop space.
  In view of the Viterbo isomorphism $SH^*_{S^1,+}(T^*L)\cong H_{n-*-1}(\L L/S^1,L)$ and the results extending it, the Floer complex  $CF^*_{S^1,+}(T^*L)$ is understood as a model for the \emph{string topology} of $L$, that is, for the space of chains $C_*(\L L/S^1,L)$ on the unparametrised loop space of $X$. This space carries a wealth of operations (see the references in Section~\ref{sec:grav}) which go under the general name of {\it string topology}. For example, the  $L_\infty$ algebra on the Floer complex is conjecturally quasi-isomorphic to the Chas-Sullivan $L_\infty$ algebra structure on $C_*(\L L/S^1,L)$, compare \cite{Irie18}.
  
  From this point of view, gravitational descendants $\psi_{m-1}^k$ of $T^*L$ are new string topology operations. Curiously, they have no obvious explicit interpretation as operations defined geometrically on the chains on the loop space; indeed, it is not clear how to reinterpret the tangency condition for the holomorphic curves in terms of chains on $\L L$.
  It would be  interesting to obtain such a description, which might involve looking at the strata of  intersections  of loop cycles where the intersections happen less generically than  transversally. It might also be possible to formulate these operations within the open conformal field theory framework for string topology \cite{Sul07,God07}.

\subsection{Periods and wall-crossing}  
 For a Laurent polynomial $W$ in $n$ variables,  a version of the Cauchy formula reads
 $$
 \phi_d(W)=\frac{1}{(2\pi i)^n}
 \cdot \int_{|x_1|=\ldots=|x_n|=1}W^d(x_1,\ldots,x_n)\cdot \frac{dx_1}{x_1}\wedge\ldots\wedge \frac{dx_n}{x_n}
 $$
 justifying the name \emph{period} for $\phi_d(W)$. Here $x_i$ are considered as complex variables.
 This formula makes it obvious that the periods of $W$ do not change if $W$ is modified  by a birational substitution of variables
 \begin{equation}
 \label{eq:wall_cross}
 \begin{array}{lll}
 x_1&\mapsto& x_1,\\
 \ldots\\
 x_{n-1}&\mapsto& x_{n-1},\\
 x_n&\mapsto& x_n\cdot f(x_1,\ldots,x_{n-1}),
 \end{array}
 \end{equation}
  where $f$ is a holomorphic function. Provided such a substitution transforms $W$ into another Laurent polynomial, this is called a \emph{mutation} of $W$, or a \emph{wall-crossing} of $W$, see e.g.~\cite{Prz07,Katzarkov2011,GU12,ACGK12,CMG13,PT17}. In particular the lemma below is easily seen from the Cauchy formula.
  
  \begin{lemma}
  	\label{lem:per_cauchy}
  The periods of a Laurent polynomial remain unchanged under mutation.\qed
  \end{lemma}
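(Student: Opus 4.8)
The plan is to read off the lemma from the Cauchy-type integral for the periods recalled just above. Write $\Omega=\frac{dx_1}{x_1}\wedge\cdots\wedge\frac{dx_n}{x_n}$, so that $\phi_d(W)=\frac{1}{(2\pi i)^n}\int_{T^n}W^d\,\Omega$ with $T^n=\{|x_1|=\cdots=|x_n|=1\}$, bearing in mind that such an integral with any Laurent polynomial in place of $W^d$ computes its constant term and in particular is independent of the radii of the polytorus. A mutation replaces $W$ by $W'=\sigma(W)$, where $\sigma$ is the substitution \eqref{eq:wall_cross} and $W'$ is by hypothesis again a Laurent polynomial; since $\sigma$ is a ring homomorphism, $(W')^d=\sigma(W^d)$, and it therefore suffices to prove that the constant term of $\sigma(P)$ equals that of $P$ for every Laurent polynomial $P$ whose image $\sigma(P)$ is also a Laurent polynomial, and then specialise to $P=W^d$.

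To do this, realise $\sigma$ by the map $\Phi(x_1,\ldots,x_n)=(x_1,\ldots,x_{n-1},x_n f(x_1,\ldots,x_{n-1}))$, which is a biholomorphism of $\{f\neq0\}\cap(\C^*)^n$ onto itself (its inverse divides the last coordinate by $f(x')$), and satisfies $\sigma(P)=P\circ\Phi$. The key step is the identity $\Phi^*\Omega=\Omega$: indeed $\Phi^*\frac{dx_n}{x_n}=\frac{d(x_n f)}{x_n f}=\frac{dx_n}{x_n}+\frac{df}{f}$, and since $f$ depends only on $x_1,\ldots,x_{n-1}$, the extra term $\frac{df}{f}$ lies in the span of $dx_1,\ldots,dx_{n-1}$ and so wedges to zero against $\frac{dx_1}{x_1}\wedge\cdots\wedge\frac{dx_{n-1}}{x_{n-1}}$. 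Hence $\sigma(P)\,\Omega=(P\circ\Phi)\,\Phi^*\Omega=\Phi^*(P\,\Omega)$, and the holomorphic change of variables along $\Phi$ yields
$$
\int_{T^n}\sigma(P)\,\Omega=\int_{T^n}\Phi^*(P\,\Omega)=\int_{\Phi(T^n)}P\,\Omega .
$$
Finally $P\,\Omega$ is a closed holomorphic $n$-form on $(\C^*)^n$, and $\Phi(T^n)$ is homologous to $T^n$ there: it represents $+1$ times the generator of $H_n((\C^*)^n;\Z)\cong\Z$, as one sees by pairing with the closed form $\Omega$ itself and using $\int_{\Phi(T^n)}\Omega=\int_{T^n}\Phi^*\Omega=\int_{T^n}\Omega\neq0$. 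Therefore $\int_{\Phi(T^n)}P\,\Omega=\int_{T^n}P\,\Omega$, so the constant terms of $\sigma(P)$ and $P$ coincide, and taking $P=W^d$ gives $\phi_d(W')=\phi_d(W)$.

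The only genuine technical point is that $f$ may vanish somewhere on the polytorus $\{|x_1|=\cdots=|x_{n-1}|=1\}$, so that $\Phi$ is not defined on all of $T^n$. This is handled by first replacing $T^n$ with $\{|x_i|=r_i\}$ for radii $r_i>0$ chosen so that $f$ does not vanish on $\{|x_i|=r_i:\ i<n\}$ — possible for generic radii, since the zero locus of the nonzero holomorphic function $f$ is a proper analytic subset — and invoking that the Cauchy integral does not depend on the $r_i$; the argument above then runs verbatim on this polytorus. I also note a purely algebraic route that bypasses contours altogether: expanding $P=\sum_{a\in\Z}c_a(x_1,\ldots,x_{n-1})\,x_n^a$ as a Laurent polynomial in $x_n$, one has $\sigma(P)=\sum_a c_a\,f^a\,x_n^a$, whose $x_n^0$-coefficient is still exactly $c_0$; since the constant term of any Laurent polynomial equals the constant term of its $x_n^0$-coefficient, the constant term of $\sigma(P)$ is that of $c_0$, i.e.\ that of $P$.
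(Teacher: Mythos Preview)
Your argument is correct and is precisely the elaboration the paper has in mind: the paper does not give a detailed proof but simply says the lemma ``is easily seen from the Cauchy formula'' and places the \qed\ in the statement. Your computation $\Phi^*\Omega=\Omega$ together with the homology argument is the standard way to make that sentence precise, and your handling of possible zeros of $f$ on the unit polytorus by shifting radii is the right fix. The purely algebraic alternative you give at the end (tracking the $x_n^0$-coefficient) is also valid and arguably the cleanest route; it avoids contours entirely and makes no analyticity demands on $f$ beyond the hypothesis that the mutated expression is again Laurent.
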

  
 There exist geometric modifications of monotone Lagrangian tori in monotone symplectic manifolds which are also called \emph{mutations}; they have been studied by Vianna \cite{Vi13} and Shende, Treumann and Williams \cite{STW15} in dimension four, and by Pascaleff and the author \cite{PT17} for higher-dimensional toric Fanos. The wall-crossing formula for such mutations was proved in \cite{PT17}; it states that the Landau-Ginzburg potential of a monotone torus changes by a specific wall-crossing (\ref{eq:wall_cross}) when a Lagrangian torus is mutated geometrically. (In these known  wall-crossing formulas, the functions $f$  from (\ref{eq:wall_cross}) are of the form $1+x_1+\ldots+x_k$ for appropriately chosen $H_1$-bases for the tori.)
 Therefore the known geometric mutations of Lagrangian tori do not change the periods of their Landau-Ginzburg potentials, in agreement with Theorem~\ref{th:lg_per}. 
  
 \subsection{Mirror symmetry}
 Given a Laurent polynomial $W$,
 one introduces
 the \emph{classical period of $W$}, or its \emph{non-regularised constant term series}, by
 $$
 \pi_W=\sum_{d\ge 0}\frac 1 {d!}\,\phi_d(W)\cdot t^d.
 $$

 Let $X$ be a monotone symplectic manifold.
 Givental's $J$-series of $X$ is a generating function for its Gromov-Witten invariants with gravitational descendants. Restricting to descendant Gromov-Witten invariants of the point class $\pt\in H_0(X)$, one defines the
 \emph{quantum period of $X$}, or the \emph{fundamental term of Givental's $J$-series} to be the following power series in the formal variable $t$:
 $$
 G_X=1+\sum_{d\ge 2}\langle\psi_{d-2}\,\pt\rangle_{X,d}\cdot t^d.
 $$
 One says that $W$ is a \emph{mirror dual to $X$}, see e.g.~\cite[Definition~4.9]{CCGGK14}
 if $$\pi_W=G_X.$$ Surveys of this topic are found in \cite{CCGGK14,CCGK16}; it has also been studied in e.g.~\cite{Prz07,Katzarkov2011,Prz13,Prz17,GalThesis} where a mirror dual potential is called a \emph{very weak Landau-Ginzburg model}. In these terms,
 Theorem~\ref{th:lg_per} can be reformulated as follows.
 
 \begin{corollary}
 Let $L\subset X$ be a monotone Lagrangian torus. Its LG~potential is mirror dual to $X$, that is, $\pi_{W_L}=G_X$.
 \end{corollary}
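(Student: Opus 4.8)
The plan is to derive this reformulation directly from the Quantum periods theorem (Theorem~\ref{th:lg_per}) by unwinding the two generating functions and comparing them coefficient by coefficient. Recall that $\pi_{W_L}=\sum_{d\ge 0}\tfrac{1}{d!}\,\phi_d(W_L)\,t^d$ and $G_X=1+\sum_{d\ge 2}\langle\psi_{d-2}\,\pt\rangle_{X,d}\,t^d$, so it suffices to check that the coefficients of $t^d$ agree for every $d\ge 0$.

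For $d\ge 2$, Theorem~\ref{th:lg_per} states $\phi_d(W_L)=d!\,\langle\psi_{d-2}\,\pt\rangle_{X,d}$, i.e.\ $\tfrac{1}{d!}\phi_d(W_L)=\langle\psi_{d-2}\,\pt\rangle_{X,d}$, which is precisely the $t^d$-coefficient of $G_X$. It then remains to match the two bottom terms. The case $d=0$ is immediate, since $W_L^{0}=1$ gives $\phi_0(W_L)=1$, matching the leading $1$ of $G_X$. For $d=1$ one needs $\phi_1(W_L)=0$, i.e.\ the vanishing of the constant term of $W_L$; equivalently, the monomial $x_1^{0}\cdots x_n^{0}$ does not appear among the disk contributions defining $W_L$, because a monotone Lagrangian torus bounds no Maslov index~$2$ holomorphic disk with null-homologous boundary. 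This is a standard property of the Landau--Ginzburg potential of a monotone torus (cf.~\cite{CO06,Au07,FO310}), and it is the only input required beyond Theorem~\ref{th:lg_per}.

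Putting these together yields $\pi_{W_L}=1+0\cdot t+\sum_{d\ge 2}\langle\psi_{d-2}\,\pt\rangle_{X,d}\,t^d=G_X$, which is the assertion. There is no real obstacle in the corollary itself: the entire analytic content sits in Theorem~\ref{th:lg_per} (and hence in the descendant computation for $T^*T^n$ underlying it). The single point worth recording separately is the vanishing of the constant term of $W_L$, which is what ensures the absence of a $t^1$-term and thereby the compatibility of the normalisations of $\pi_{W_L}$ and $G_X$.
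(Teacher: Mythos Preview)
Your proof is correct and is precisely the coefficient-by-coefficient unpacking that the paper leaves implicit: the paper simply presents the corollary as a direct reformulation of Theorem~\ref{th:lg_per} and gives no separate argument. You go slightly further by explicitly recording the $d=0$ and $d=1$ checks, in particular the vanishing of the constant term of $W_L$, which the paper does not spell out.
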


 This corollary can be seen as a vast generalisation of the following theorem of Givental. It follows from his proof of mirror symmetry for toric varieties, specifically from his computation of their $J$-function \cite{Giv96}; see~\cite[Corollary~C.2]{CCGK16}, \cite{CCGGK14} for details, and also recall Example~\ref{ex:toric_pot}.
  (Sample computations extracting quantum periods from the $J$-function are also found in \cite[Example~10.1.3.1]{CoKa99} and \cite[Example~5.13]{Guest08}.)
 
 \begin{theorem}
 Let $X$ be a toric Fano variety and $W$ its standard toric potential. Then $W$ is mirror dual to $X$.\qed
 \end{theorem}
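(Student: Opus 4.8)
The plan is to reduce the assertion to the single power-series identity $\pi_W=G_X$ that the phrase ``mirror dual'' abbreviates, to compute the two sides by hand, and to observe that the resulting combinatorial expressions are literally equal; the one external input is Givental's computation of the $J$-function of a toric Fano variety \cite{Giv96}. (There is also a shortcut: the monotone fibre of the moment map has LG potential equal to the standard toric potential by Cho and Oh, cf.\ Example~\ref{ex:toric_pot}, so the statement is the case $L=$ this fibre of the corollary just above, hence of Theorem~\ref{th:lg_per}; but I record here the classical argument, which is independent of this paper.)

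First I would compute $\pi_W$ directly. Write the (reflexive) moment polytope as $\Delta=\{v\in\R^n:\langle v,u_i\rangle\ge-1,\ i=1,\dots,m\}$ with $u_1,\dots,u_m\in\Z^n$ the primitive inward facet normals, so that the standard toric potential is $W=\sum_{i=1}^m x^{u_i}$ in the coordinates $x^{u}=x_1^{u_1}\cdots x_n^{u_n}$ (for $\C P^2$ this recovers $x_1+x_2+x_1^{-1}x_2^{-1}$). Expanding $W^d$ by the multinomial theorem, the constant term is contributed exactly by the tuples $a=(a_1,\dots,a_m)\in\Z_{\ge0}^m$ with $\sum_i a_i=d$ and $\sum_i a_iu_i=0$, whence
$$
\phi_d(W)=\sum_{\substack{a\in\Z_{\ge0}^m,\ \sum_i a_i=d\\ \sum_i a_iu_i=0}}\frac{d!}{a_1!\cdots a_m!},
\qquad
\pi_W=\sum_{\substack{a\in\Z_{\ge0}^m\\ \sum_i a_iu_i=0}}\frac{t^{\,a_1+\cdots+a_m}}{a_1!\cdots a_m!}.
$$
(This is the Cauchy-integral formula of the subsection on periods and wall-crossing, applied term by term.)

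Next I would identify $G_X$ with the same sum. Givental's mirror theorem for toric Fano varieties produces an explicit hypergeometric $I$-function built from the combinatorics of $\Delta$ and asserts $I_X=J_X$. Extracting the component of $J_X$ along the point class $\pt\in H^{2n}(X)$ and organising the contributions by effective curve class $\beta$ — using $-K_X=\sum_i D_i$, so that $\langle-K_X,\beta\rangle=\sum_i\langle D_i,\beta\rangle$ equals the power of $t$, and that the tuples $(\langle D_i,\beta\rangle)_i$ of nonnegative integers are exactly those with $\sum_i\langle D_i,\beta\rangle u_i=0$ — yields $G_X=\sum_{a}t^{a_1+\cdots+a_m}/(a_1!\cdots a_m!)$ over the very same index set $\{a\in\Z_{\ge0}^m:\sum_i a_iu_i=0\}$; this extraction is the bookkeeping carried out in \cite[Corollary~C.2]{CCGK16} and \cite{CCGGK14}. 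Comparing with the previous display gives $\pi_W=G_X$, i.e.\ $W$ is mirror dual to $X$.

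The step that needs genuine care is the last one: passing from Givental's $I$-function to the closed formula for $G_X$ requires correctly handling the Novikov-variable specialisation, the precise relation between the $I$- and $J$-functions (the derivative/asymptotic normalisation recovering $J$ from $I$), and the verification that for Fano $X$ all mirror-map corrections vanish to the orders relevant for the point-class component; for non-Fano $X$ these corrections are genuinely present and the naive identity fails. A route avoiding the $I$-function altogether is to apply the string and divisor equations to Givental's explicit descendant invariants of toric Fanos, which reproduces the same combinatorial sum; either way the theorem collapses to the elementary fact that the two displayed power series coincide.
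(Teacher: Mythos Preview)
Your proposal is correct and matches the paper's treatment. The paper does not give its own proof of this theorem: it is stated with a \qed and attributed to Givental's computation of the $J$-function, with the extraction of $G_X$ deferred to \cite[Corollary~C.2]{CCGK16} and \cite{CCGGK14}, together with the Cho--Oh identification of the toric potential recalled in Example~\ref{ex:toric_pot}. Your main argument follows exactly this route, and in fact goes further by spelling out the multinomial computation of $\pi_W$ and the matching hypergeometric shape of $G_X$ that the cited references establish.

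Your parenthetical ``shortcut'' via Theorem~\ref{th:lg_per} is also valid, but note that it is not fully independent of Givental's input: the proof of Theorem~\ref{th:m_comput} (through Lemma~\ref{lem:cpm}) invokes the known descendant invariants of products of projective spaces, which are themselves instances of the toric computation you are citing. So the classical argument you give is the more self-contained of the two.
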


 \begin{example}
 	The Laurent polynomial
 	$$
 	W=x_1+\ldots+x_{n-1}+x_1^{-1}\ldots x_{n-1}^{-1}
 	$$
 	is mirror dual to $\C P^{n-1}$. The periods of $W$ are easily computed: for each $r\ge 1$
 	$$
 	\phi_{nr}(W)=
 	\binom {nr} {n,n,\ldots,n}
 	=\frac{(nr)!}{(n!)^r}
 	$$
 	where the middle term is a multinomial coefficient.
 	The quantum periods of $\C P^{n-1}$ are
 	$$
 	\langle \psi_{nr-2}\,\pt\rangle_{\C P^{n-1},\,nr}=\frac{1}{(n!)^r}.
 	$$
 \end{example}

 The definition of a Laurent polynomial dual to a Fano variety has the following motivation.
 Under suitable assumptions, the series $G_X$ and $\pi_W$ are solutions to the following  differential equations, respectively: the flatness equation with respect to the Dubrovin connection  and the Picard-Fuchs equation.
 The classical  mirror symmetry conjecture for variations of Hodge structures predicts  that
 the two differential equations are equivalent; in particular for a true mirror potential $W$ it should hold that $G_X=\pi_W$. 
 
The existence of mirror dual potentials has been established through explicit computation for important classes of Fanos like toric varieties and their complete intersections, and for certain other examples like Fano 3-folds where the proofs in a  sense rely on a case-by-case analysis, see e.g.~\cite{CoGi07,Prz13,ILP13,CCGK16}. Given a general Fano variety, little is known about the existence of a mirror dual potential, let alone mirror symmetry.
 The following is  a   conjecture.
  
 \begin{conjecture}
 	\label{conj:mirror_fano}
 	For every smooth compact Fano variety, there exists a Laurent polynomial which is its mirror dual.
 \end{conjecture}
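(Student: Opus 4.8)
\medskip
\noindent\textbf{Proof proposal.} Conjecture~\ref{conj:mirror_fano} is open, so what follows is a plan rather than a proof. The natural strategy is to reduce it to a purely symplectic existence statement using the machinery already in place. Combining Theorem~\ref{th:lg_per} with the definitions of $\pi_W$ and $G_X$, one sees that if $L\subset X$ is \emph{any} monotone Lagrangian torus then $\pi_{W_L}=G_X$; that is, $W_L$ is a Laurent polynomial mirror dual to $X$. Hence Conjecture~\ref{conj:mirror_fano} would follow at once from the assertion $(\ast)$: \emph{every smooth compact Fano variety, viewed as a monotone symplectic manifold, contains a monotone Lagrangian torus}. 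So the plan is to establish $(\ast)$. This is classical for toric Fanos (the moment-map fibre over the origin, as in Example~\ref{ex:toric_pot}), known for del Pezzo surfaces by the work of Vianna \cite{Vi13,Vi14,Vi16}, and available in various further cases, but open in general.

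\emph{First approach: toric degenerations.} A large class of smooth Fanos --- conjecturally all of them --- admit a flat degeneration $\mathcal{X}\to\mathbb{D}$ with smooth generic fibre $X$ and central fibre $X_0$ a (possibly singular) Gorenstein toric Fano variety; these are the degenerations underlying the Fanosearch/Minkowski-polynomial program \cite{CCGGK14,CCGK16} and the Gross--Siebert picture. The idea is to take the monotone moment-map fibre $L_0\subset X_0$ over the barycentre of the polytope and transport it, by symplectic parallel transport along the degeneration, to a Lagrangian torus $L\subset X$. Two points need care: (i) $X_0$ and the total space are singular, so one works over the locus $X_0\setminus\mathrm{Sing}(X_0)$, which requires the barycentre to be a smooth point of $X_0$ and uniform control of parallel transport near the singular strata; (ii) monotonicity of $L$ must be checked, but this is a cohomological condition and is inherited from that of $L_0$ as long as the family is nice enough that $[\omega]$ and $c_1$ vary continuously and $H_2(X,L;\Z)$ is locally constant. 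The substantive work is to make (i) precise, e.g.\ via a version of Moser's argument adapted to families with mild (say terminal Gorenstein) singularities.

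\emph{Second approach: SYZ/almost-toric fibrations.} This is the higher-dimensional analogue of the four-dimensional constructions of Vianna \cite{Vi13} and of Shende--Treumann--Williams \cite{STW15}: remove an anticanonical divisor $D\subset X$, build a Lagrangian torus fibration (with singular fibres) on $X\setminus D$ over a base carrying an integral-affine structure with singularities, as predicted by the Gross--Siebert program, and single out the monotone fibre by the condition $[\omega]=\lambda\,c_1$, i.e.\ the fibre over the ``centre'' of the base. The difficulty here is analytic: producing a genuine Lagrangian (not merely tropical or formal) torus fibration in complex dimension $\ge 3$ is presently out of reach in general.

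In either route the main obstacle is exactly $(\ast)$: there is currently no construction of monotone Lagrangian tori in an arbitrary Fano. I expect a proof will need genuinely new input --- a ``toric degeneration with controlled singularities'' theorem valid for all smooth Fanos (a statement in birational geometry), or a general SYZ existence theorem (a statement in symplectic analysis). One cannot sidestep this by building the Laurent polynomial directly from the descendant invariants $\langle\psi_{d-2}\,\pt\rangle_{X,d}$: those invariants do determine the formal series $\pi_W=G_X$, but upgrading a formal constant-term series to an honest Laurent polynomial with finitely many monomials is precisely the content of the conjecture and cannot be bypassed. In this light the role of Theorem~\ref{th:lg_per} in an eventual proof is purely reductive: it turns the mirror-symmetry assertion into the geometric existence assertion $(\ast)$, which is where the real difficulty lies.
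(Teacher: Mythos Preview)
Your proposal is correct in spirit and matches the paper's own treatment: the statement is a \emph{conjecture}, and the paper does not prove it. The paper's ``proof'' consists of exactly the reduction you describe --- it records that Theorem~\ref{th:lg_per} reduces Conjecture~\ref{conj:mirror_fano} to the existence of a monotone Lagrangian torus in $X$, states this as a corollary, and remarks that one expects any Fano admitting a toric degeneration to contain such a torus. Your discussion of the two approaches (toric degeneration, SYZ/almost-toric) and of the genuine obstacle $(\ast)$ goes somewhat beyond what the paper says, but is entirely consistent with it.
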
 

  Theorem~\ref{th:lg_per} reduces this  to the problem of finding a monotone Lagrangian torus in $X$, without ever having to compute the periods explicitly. One generally expects  that any Fano variety admitting a toric degeneration  contains such a torus.

  \begin{corollary}
  	Conjecture~\ref{conj:mirror_fano} holds for any Fano variety that contains a monotone Lagrangian torus.\qed
  \end{corollary}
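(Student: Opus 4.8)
The plan is to read the corollary off Theorem~\ref{th:lg_per} through the mirror-duality dictionary of the preceding subsection. Let $X$ be a smooth compact Fano variety. Since $-K_X$ is ample, the class $c_1(X)$ is represented by a Kähler form $\omega$ with $[\omega]=c_1(X)$, so $X$ is a closed monotone symplectic $2n$-manifold and Theorem~\ref{th:lg_per} applies to it; in particular the descendant invariants $\langle\psi_{d-2}\,\pt\rangle_{X,d}$, and hence the series $G_X$, are well defined. By hypothesis $X$ contains a monotone Lagrangian torus $L$; fixing a basis of $H_1(L;\Z)$ yields its LG~potential $W_L\in\C[x_1^{\pm1},\ldots,x_n^{\pm1}]$. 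I claim $W_L$ is a mirror dual of $X$, i.e.\ $\pi_{W_L}=G_X$, which proves Conjecture~\ref{conj:mirror_fano} for $X$.

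To verify $\pi_{W_L}=G_X$ I match the coefficient of $t^d$ on both sides. The $t^0$-coefficient of $\pi_{W_L}=\sum_{d\ge 0}\tfrac{1}{d!}\phi_d(W_L)\,t^d$ is $\phi_0(W_L)$, the constant term of $W_L^0=1$, which is $1$ and matches the leading term of $G_X$. For $d\ge 2$ the $t^d$-coefficient of $\pi_{W_L}$ is $\tfrac{1}{d!}\phi_d(W_L)$, which by Theorem~\ref{th:lg_per} equals $\langle\psi_{d-2}\,\pt\rangle_{X,d}$, i.e.\ the $t^d$-coefficient of $G_X$. It remains to match the linear coefficients: $G_X$ has no term in $t^1$, so I must check $\phi_1(W_L)=0$, i.e.\ that $W_L$ has no constant term. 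This is a standard feature of LG~potentials of monotone Lagrangian tori: a constant monomial would be contributed by a Maslov index~$2$ holomorphic disk with null-homologous boundary, which (since $L\cong T^n$ is aspherical) has null-homotopic boundary, so capping it off inside $L$ realises its relative class as $\iota_*A$ for a spherical class $A$ with $c_1(A)=1$; the corresponding disk count is governed by a one-pointed genus-$0$ Gromov--Witten invariant of $X$ in class $A$ with a point insertion, which vanishes for dimension reasons, the relevant moduli space of stable maps having virtual real dimension $2n-2$, below the degree $2n$ of the point class. See \cite{CO06,Au07,FO310}. With all coefficients matched, $\pi_{W_L}=G_X$.

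I do not expect any genuine obstacle in the corollary itself: all of the analytic and enumerative substance lies in Theorem~\ref{th:lg_per}, whose proof occupies the rest of the paper, and what is left here is the purely formal comparison of the generating series $\pi_W$ and $G_X$ together with the above elementary observations about their degree-$0$ and degree-$1$ terms. The one point that is not mere bookkeeping is the vanishing of the constant term of $W_L$, needed to match the linear coefficients; I would treat this as the known normalisation of LG~potentials of monotone Lagrangian tori recorded in the references above.
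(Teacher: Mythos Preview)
Your proposal is correct and matches the paper's approach: the corollary is marked with a tombstone, treated as immediate from the preceding corollary $\pi_{W_L}=G_X$, which is itself presented as a reformulation of Theorem~\ref{th:lg_per}. You unpack that reformulation by matching coefficients term by term, which the paper does not bother to do, so you are in fact being more careful than the paper here.

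The one point worth flagging is your $d=1$ argument. The assertion that the count of Maslov index~2 disks with null-homologous boundary ``is governed by'' the one-pointed Gromov--Witten invariant in the capped sphere class $A$ is not justified: gluing a non-holomorphic disk in $L$ to a holomorphic disk does not identify the two moduli problems, and there is no general reason the open and closed counts should agree. The conclusion $\phi_1(W_L)=0$ is nonetheless correct. Within the paper's own framework the cleanest justification is Lemma~\ref{lem:bs_w}: it identifies $W_L$ with the Borman--Sheridan class in $SH^0_{S^1,+}(T^*L)$, which is a linear combination of Reeb orbits on $S^*T^n$; for the flat metric these are closed geodesics, none of which are contractible, so no constant monomial can appear. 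Alternatively, as you yourself say at the end, one may simply cite this as a known normalisation of the LG~potential and move on.
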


 \subsection{Quantum Lefschetz}
One envisions another application of the periods theorem to enumerative geometry. Given a smooth divisor  $Y\subset X$ where both $X$ and $Y$ are Fano, the quantum Lefschetz theorem of  Coates and Givental~\cite{CoGi07} relates their quantum periods to each other; in \cite{CCGK16} one finds a more explicit version of this formula when $Y$ is toric.
Theorem~\ref{th:lg_per} should  lead to a symplectic-geometric understanding of this result.

Consider a Fano hypersurface $Y\subset X$ whose homology class is proportional to the anticanonical class. Assuming $Y$ contains a monotone Lagrangian torus $L\subset Y$, one can write down an explicit formula relating the potential of $L\subset Y$ with the potential of its Biran lift \cite{BiCi01,Bi06} to a monotone torus in $X$, cf.~\cite{Bk13} and the ongoing project \cite{DTVW17}. Comparing the periods of the two potentials yields a version of the quantum Lefschetz  theorem: the result has been checked to agree with 
\cite{CCGK16} in several illuminating examples. One hopes that the strong requirement on $Y$ above may be removed; the details will appear in a forthcoming work.

\subsection{Lagrangian tori and mirror cluster charts}
Mirror symmetry predicts that any compact Fano variety $X$ has a mirror Landau-Ginzburg model $(\check X,W)$ where $\check X$ is a complex variety, and $W\co \check X\to\C$ is a holomorphic function whose fibres are compact Calabi-Yau varieties. Here $\check X$ is a complex variety which, without the potential $W$, is expected to be mirror to $X\setminus \Sigma$ where $\Sigma\subset X$ is a smooth anticanonical divisor. Mirror symmetry is a package of several interrelated conjectures,  including mirror symmetry for variations of Hodge structures and homological mirror symmetry; they have been established in a significant number of cases.

The periods theorem and the results about mutations  suggest the following very geometric conjecture within the framework of mirror symmetry; cf.~\cite[Section~1.2]{To17}.

\begin{conjecture}
	\label{conj:charts}
	Let $X$ be a compact Fano variety.
	The set of embeddings $(\C^*)^n\subset \check X$ (`cluster charts') is in bijection with the set of monotone Lagrangian tori in $X$ up to Hamiltonian isotopy. For a chart $(\C^*)^n_L\subset \check X$ corresponding via this bijection to a monotone Lagrangian torus $L\subset X$, it holds that
	\begin{equation}
	\label{eq:W_chart}
	W|_{(\C^*)^n_L}=W_L\in\C[x_1^{\pm 1},\ldots,x_n^{\pm 1}].
	\end{equation}
	Above, the left hand side is the restriction of $W$ from $\check X$ onto $(\C^*)^n_L$; since this is a regular function on $(\C^*)^n_L$, it can be written as a Laurent polynomial. The right hand side is the LG potential of $L$.
\end{conjecture}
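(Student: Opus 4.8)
\emph{Strategy.} Since Conjecture~\ref{conj:charts} is, well, a conjecture, the outline below is a program rather than a complete argument; I indicate where the genuinely hard inputs enter. The natural approach is to \emph{build} $\check X$ so that cluster charts are manifestly indexed by monotone Lagrangian tori, and then to certify that the resulting variety deserves the name ``mirror''. Concretely, to a monotone Lagrangian torus $L\subset X$ with a chosen $H_1$-basis one associates the affine chart $(\C^*)^n_L$ parametrising rank-one $\C$-local systems on $L$ (equivalently, the $\Ham$-isotopy-invariant Floer-theoretic deformation space of $L$), and one \emph{defines} the restriction of the sought potential $W$ to this chart to be $W_L$. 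The periods theorem, Theorem~\ref{th:lg_per}, is exactly the compatibility statement that makes this sensible: $\pi_{W_L}=G_X$ for every $L$, so the candidate global ``$\pi_W=G_X$'' exists independently of the chosen torus. One then glues the charts $(\C^*)^n_L$ along the wall-crossing substitutions \eqref{eq:wall_cross} provided by the mutation formula of \cite{PT17} (and its expected generalisations), producing a space $\check X$ with a function $W$ for which \eqref{eq:W_chart} holds by construction; the real content is (i) that this $\check X$ is the homological-mirror variety, and (ii) that $L\mapsto(\C^*)^n_L$ is a bijection.

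For (i) the cleanest route is family Floer homology, in the sense of Fukaya and Abouzaid: a monotone Lagrangian torus fibration on the complement of a smooth anticanonical divisor $\Sigma\subset X$ yields, via holomorphic disk counts, an analytic (over $\C$, algebraic) mirror equipped with a superpotential, and one must identify it with the chart-glued $(\check X,W)$ above. This amounts to two checks already discussed in the excerpt: that the disk-counting superpotential of a torus fibre equals $W_L$ when the fibre is monotone (this is the definition of $W_L$ recalled in the introduction), and that the transition map across a wall is precisely the substitution \eqref{eq:wall_cross} (the wall-crossing formula of \cite{PT17}). Homological mirror symmetry, $D^b\Fuk(X)\simeq D^b\mathrm{MF}(\check X,W)$, then certifies that $\check X$ is the mirror; in the cases where it is a theorem (toric Fanos, del Pezzo surfaces, and some others) one obtains the conjecture in those cases, modulo the classification issue below.

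For (ii), injectivity has two halves. That the chart depends only on the Hamiltonian isotopy class of $L$ is the monotonicity input already used to make $W_L$ well defined. That non-isotopic tori give genuinely distinct (non-identified) charts should be read off from HMS: the object of $\Fuk(X)$ cut out by $(L,\rho)$ corresponds to a skyscraper-type object at the point of $\check X$ named by $(L,\rho)$, so tori with distinct Floer-theoretic invariants occupy disjoint chart-strata; the closed--open map $\CO\co QH^*(X)\to HH^*(\Fuk(X))$ and the resulting identification of $\Spec QH^*(X)$ with $\Crit W$ organise this. Surjectivity — that \emph{every} embedding $(\C^*)^n\hookrightarrow\check X$ arises from a torus — is where I expect the genuine obstacle to lie: it requires knowing that $\check X$ is \emph{exhausted} by the tautological charts, with no ``phantom'' cluster charts, which is simultaneously a statement about the scattering/cluster combinatorics of $\check X$ and, dually, a \emph{classification} of monotone Lagrangian tori in $X$ up to Hamiltonian isotopy. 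Even for $X=\C P^2$ this is open: the assertion that the Vianna tori (indexed by Markov triples) are all of them is the symplectic avatar of the uniqueness half of the Markov conjecture.

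So the main obstacle is not a single technical lemma but the conjunction of two ingredients currently out of reach in general: a complete classification of monotone Lagrangian tori (for surjectivity of the bijection), and homological mirror symmetry in a form strong enough both to pin down $\check X$ and to detect tori by their images in it. Where both are available — toric Fano varieties being the model case, with the tori conjecturally the iterated mutations of the standard one and HMS a theorem — the program above should run and yield Conjecture~\ref{conj:charts} unconditionally; the general case would follow from progress on those two fronts. I would begin by making the $\C P^2$ case airtight, since it already displays every feature of the general argument.
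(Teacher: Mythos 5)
The statement you were asked to address is labelled a \emph{Conjecture} in the paper, and the paper offers no proof of it --- only surrounding context, evidence, and an explanation of why it is difficult. You have correctly recognised this and framed your text as a program rather than a proof, which is the appropriate response. Your identification of the two central obstructions --- the lack of a classification of monotone Lagrangian tori up to Hamiltonian isotopy, and the need for homological mirror symmetry strong enough both to pin down $\check X$ and to detect tori inside it --- agrees with the paper's own remarks: the paper says explicitly that the conjecture is out of reach even for $X=\C P^2$ precisely because the classification problem is open there, and lists the only known complete classifications ($T^*T^2$ and a Whitney immersion neighbourhood). The family-Floer-plus-mutation gluing route you sketch, with the wall-crossing formula of \cite{PT17} supplying the transition maps, is a reasonable framework, and you assign the periods theorem its correct supporting role (a consistency check, $\pi_{W_L}=G_X$ for all $L$, rather than a mechanism of proof).

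Two cautions. First, if $\check X$ is \emph{defined} by gluing the tautological charts indexed by tori, then \eqref{eq:W_chart} holds essentially by fiat and the entire content shifts to identifying this glued space with an independently specified mirror (via HMS, the Gross--Siebert algorithm, or otherwise); and even in the constructed model one must still rule out $(\C^*)^n$-embeddings into the glued $\check X$ that do not coincide with any chart inclusion, so surjectivity of the proposed bijection is not automatic. Second, your closing suggestion that the program ``should run and yield the conjecture unconditionally'' for toric Fano varieties overstates the state of the art: the classification of monotone Lagrangian tori in a compact toric Fano is itself only conjectural, so --- as the paper acknowledges --- no case of Conjecture~\ref{conj:charts} is currently within reach by this route. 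This is a discrepancy in tone rather than substance, but it is worth correcting, since the distinction between ``conditionally clear'' and ``known'' is exactly what makes the statement a conjecture.
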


Rather than considering  cluster charts in the complex variety $\check X$, one can formulate an analogous conjecture by looking  at the chambers in the mirror constructed using the Gross-Siebert algorithm \cite{Gro11,GrSi11}; the papers of Carl, Pumperla and Siebert \cite{CPS11} and Prince \cite{Pri17} explain this point of view in detail.

This conjecture seems to be currently out of reach even for $X=\C P^2$. This is  because it is extremely hard to classify Lagrangian tori up to Hamiltonian isotopy even in the simplest manifolds. Basically, the only known non-trivial classifications are for $T^*T^2$ \cite{DGI16} and the neighbourhood of the Whitney immersion \cite{Riz17}. It is an open problem whether every monotone Lagrangian torus in $\C P^2$ is Hamiltonian isotopic to a Vianna torus; and whether a monotone Lagrangian torus in $\R^4$ is Hamiltonian isotopic to either the Clifford or the Chekanov one.

What Theorem~\ref{th:lg_per} makes more tractable is an algebraic version of Conjecture~\ref{conj:charts}:
that the set of LG~potentials of all possible monotone Lagrangian tori in $X$ is in bijection with the restrictions of the mirror LG model to the  charts,
$$
\{W_L:L\subset X\}/GL(n,\Z)=\{W|_{(\C^*)^n}:(\C^*)^n\subset\check X\}/GL(n,\Z).
$$
Given a sufficiently complete construction of Lagrangian tori in $X$,  Theorem~\ref{th:lg_per} can reduce this question to a combinatorial one.

\begin{conjecture}
	\label{conj:cp2}
Let $W(x,y)$ be  a Laurent polynomial in two variables. If it has the same periods as $x+y+1/xy$, that is $\pi_W=\pi_{x+y+1/xy}$, then $W$ is equal to the potential of a Vianna torus.\qed
\end{conjecture}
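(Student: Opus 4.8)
The plan is to argue entirely on the side of Laurent polynomials, in two stages. \textbf{Stage~1:} use the hypothesis $\pi_W=\pi_{x+y+1/xy}$ --- equivalently $\pi_W=G_{\C P^2}$, by the Givental computation recalled above --- to pin down the Newton polygon $P$ of $W$ up to $GL(2,\Z)$ and translation, together with the shape of the coefficients of $W$ along $\partial P$. \textbf{Stage~2:} invoke the combinatorial classification of such polygons to mutate $W$ to a normal form in which the conclusion is immediate, and then transport it back along the chain of mutations. Throughout, Lemma~\ref{lem:per_cauchy} lets one move within a mutation class without changing $\pi_W$; and Corollary~\ref{cor:per_eq} (via Theorem~\ref{th:lg_per}), together with the wall-crossing formula of \cite{PT17}, records that every Vianna potential does satisfy $\pi_{W_L}=\pi_{x+y+1/xy}$, so this is the correct necessary condition to analyse.

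For Stage~1 one extracts the combinatorial shadow of $P$ from the period sequence $\phi_d(W)$. The values $\phi_0(W)=1$ and $\phi_1(W)=\phi_2(W)=0$ force $W$ to have vanishing constant term and constrain its low-order part; the vanishing $\phi_d(W)=0$ for $3\nmid d$ imposes a $\Z/3$-type symmetry on $P$; and the exponential growth $\phi_{3k}(W)^{1/3k}\to 3$ constrains the size of $P$ through the known link between the growth rate of constant-term sequences and the geometry of the Newton polytope --- equivalently, through the location of the critical values of $W$ on $(\C^*)^2$, or through the Picard--Fuchs operator of the pencil $\{W=s\}$, which must coincide with the quantum differential operator of $\C P^2$. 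Because $\pi_W$ has to match $G_{\C P^2}$ in \emph{every} degree, I expect this to force $W$ to be a maximally mutable Laurent polynomial in the sense of Coates--Corti--Galkin--Kasprzyk and Przyjalkowski, and $P$ to be --- up to $GL(2,\Z)$ and translation --- one of the Markov triangles $T_{p,q,r}$ attached to the solutions of $p^2+q^2+r^2=3pqr$. \emph{This is the step I expect to be the main obstacle}: at present there is no general mechanism recovering even the mutation class of the Newton polytope of a Laurent polynomial from its period sequence, and the genuinely hard point is to exclude a hypothetical sporadic $W$ whose Newton polygon is large and whose coefficients reproduce the $\C P^2$ period sequence only through cancellation.

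Granting Stage~1, Stage~2 is comparatively formal. The Markov triangles form a single mutation class: the Markov moves $(p,q,r)\mapsto(p,q,3pq-r)$ generate a tree and are realised by algebraic mutations (\ref{eq:wall_cross}) of the supporting polygons, as part of the classification of Fano polygons and del~Pezzo degenerations (Hacking--Prokhorov, Akhtar--Kasprzyk). Since $W$ is maximally mutable it admits these mutations as a Laurent polynomial, so a suitable sequence of them --- using Lemma~\ref{lem:per_cauchy} at each step --- reduces the situation to $P=\mathrm{conv}\{(1,0),(0,1),(-1,-1)\}$, with $\pi_W$ still equal to $G_{\C P^2}$. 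This triangle contains only its three vertices and the origin as lattice points, so $W=ax+by+c\,(xy)^{-1}+e$; now $\phi_1(W)=0$ forces $e=0$, and $\phi_3(W)=6abc=6$ forces $abc=1$, so the rescaling $x\mapsto\lambda x$, $y\mapsto\mu y$ --- which preserves all periods --- normalises $W$ to $x+y+(xy)^{-1}$, the LG potential of the monotone Clifford torus, that is, of the Vianna torus of Markov triple $(1,1,1)$. Finally, running the mutations backwards and applying the wall-crossing formula of \cite{PT17} (together with Vianna's construction), each algebraic mutation used above lifts to a geometric mutation of Lagrangian tori, so $W$ is, up to a change of $H_1$-basis and a coordinate rescaling, the LG potential of a Vianna torus, as claimed.
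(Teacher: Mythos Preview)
The statement you are attempting to prove is stated in the paper as a \emph{conjecture}, not a theorem; the paper offers no proof. Immediately after the statement, the paper remarks that the conjecture has only been resolved by Akhtar and Kasprzyk under additional hypotheses on the Newton polygon of $W$. The \verb|\qed| in the source is a formatting artefact (the paper uses it after several statements whose justification is deferred or external), not an indication that a proof follows.

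Your proposal is candid about exactly this: you flag Stage~1 as ``the main obstacle'' and correctly observe that there is no known mechanism for recovering the mutation class of the Newton polygon of a Laurent polynomial from its period sequence alone. That is precisely why the statement remains open. The heuristics you list --- growth rate $\phi_{3k}(W)^{1/3k}\to 3$, the $\Z/3$-symmetry from $\phi_d=0$ for $3\nmid d$, matching of Picard--Fuchs operators --- constrain $W$ but do not exclude the sporadic large-polygon cancellations you yourself mention; as far as is currently known, nothing does in full generality.

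Your Stage~2 is essentially the content of the Akhtar--Kasprzyk result the paper cites: \emph{once} one knows the Newton polygon is (up to $GL(2,\Z)$) a Markov triangle and $W$ is maximally mutable on it, mutating down the Markov tree to the Clifford triangle and normalising finishes the argument. So your proposal is a correct outline of what a proof would have to accomplish, with the gap honestly located at the decisive step --- but it is not a proof, and the paper does not contain one either.
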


Under certain conditions on the Newton polytope of $W$, this conjecture has been solved by Akhtar and Kasprczyk \cite{AkKa13}.
Now Theorem~\ref{th:lg_per} implies the following.

\begin{corollary}
	Assume Conjecture~\ref{conj:cp2} folds. Then
every monotone Lagrangian torus in $\C P^2$ has the LG~potential of a Vianna torus.\qed
\end{corollary}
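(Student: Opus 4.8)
The plan is to obtain this as a formal consequence of Theorem~\ref{th:lg_per} (in the packaged form of Corollary~\ref{cor:per_eq}) together with the assumed Conjecture~\ref{conj:cp2}; no new geometry is needed.

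First I would fix a reference torus: the monotone toric (Clifford) torus $T\subset\C P^2$, whose LG~potential is $x+y+x^{-1}y^{-1}$ by Example~\ref{ex:toric_pot}, and which is itself a Vianna torus (the one attached to the Markov triple $(1,1,1)$). Let $L\subset\C P^2$ be an arbitrary monotone Lagrangian torus. By Corollary~\ref{cor:per_eq} the $d$th periods of $W_L$ and of $W_T=x+y+x^{-1}y^{-1}$ agree for every $d$; since the classical period $\pi_W=\sum_{d\ge 0}\tfrac1{d!}\phi_d(W)\,t^d$ is assembled from exactly these numbers, this yields $\pi_{W_L}=\pi_{x+y+1/xy}$.

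Next, $W_L$ is a Laurent polynomial in two variables satisfying the hypothesis of Conjecture~\ref{conj:cp2}, so—granting that conjecture—$W_L$ equals the LG~potential of a Vianna torus, which is the assertion. The one point worth spelling out is that $W_L$ depends on a choice of $\Z$-basis of $H_1(L;\Z)$, i.e.\ is well defined only up to the $GL(2,\Z)$-action on Laurent polynomials; but periods are invariant under this action (indeed under all mutations, Lemma~\ref{lem:per_cauchy}), so the period hypothesis is basis-independent and matches the indeterminacy already built into the statement of Conjecture~\ref{conj:cp2}. Since the corollary is a direct deduction from two cited results, there is no genuine obstacle here—only this harmless bookkeeping of the $GL(2,\Z)$ ambiguity, which is the sole step requiring any care.
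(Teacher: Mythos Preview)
Your proposal is correct and is exactly the argument the paper intends: the corollary is marked with \qed\ because it follows immediately from Corollary~\ref{cor:per_eq} (which forces $\pi_{W_L}=\pi_{x+y+1/xy}$ via the Clifford torus) together with the assumed Conjecture~\ref{conj:cp2}. Your explicit write-up, including the remark on the $GL(2,\Z)$ ambiguity, simply unpacks what the paper leaves implicit.
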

	
\subsection{Structure and proof ideas} The proof of Theorem~\ref{th:lg_per} begins with a beautiful idea of Cieliebak and Mohnke which applies SFT stretching around $L\subset X$ to the holomorphic spheres computing $\langle\psi_{d-2}\,\pt\rangle^\en_{X,d}$ choosing the point constraint to lie on $L$. The argument, sketched in Figure~\ref{fig:gw_stretch}, is recalled in Section~\ref{sec:cm}.  Sections~\ref{sec:bs},~\ref{sec:grav} analyse the two broken moduli spaces arising from the stretching; they compute the concurrently introduced Borman-Sheridan class and gravitational descendants, respectively. The short Section~5 revisits the stretching argument from Section~\ref{sec:cm} to show that it essentially amounts to
Theorem~\ref{th:gw_s_intro}. Section~\ref{sec:torus} contains the central computation of the paper: it determines  descendant invariants for the cotangent bundle of the $n$-torus. The computation of descendants and Theorem~\ref{th:gw_s_intro} imply Theorem~\ref{th:lg_per}.

The computation in Section~\ref{sec:torus} starts with an identity for descendants of $T^*T^n$ implied by the $L_\infty$ relations and the computed Chas-Sullivan bracket on the loop space of the torus. The section finishes with an inductive procedure which reduces all descendants to the ones coming from products of projective spaces.

\begin{figure}[h]
\includegraphics[]{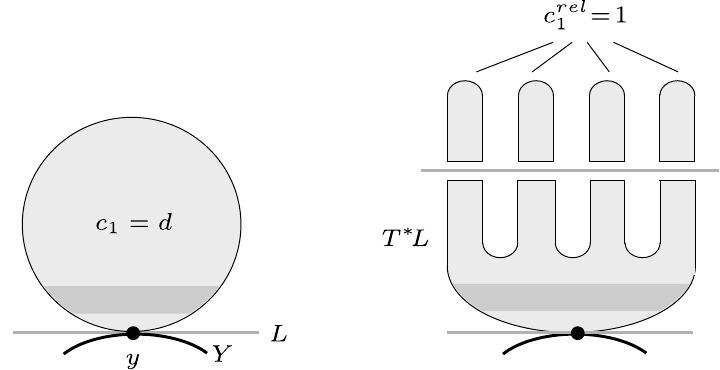}
\caption{The stretching argument. The actual argument uses a stabilising divisor and additional marked points, not shown here.}
\label{fig:gw_stretch}
\end{figure}
	
	\subsection*{Acknowledgements} I am very thankful to Chiu-Chu Melissa Liu and Kyler Siegel for pointing out a mistake in the previous version of the article.
	
	I am deeply grateful to Denis Auroux, Tobias Ekholm and Ivan Smith for creating environments where doing mathematics turns into a truly engaging process. I learned a lot from them, and benefited many times from their support.
	
	This work was partially supported by the Simons
	Foundation grant \#385573, Simons Collaboration on Homological Mirror
	Symmetry.
	
	\section{Stretching closed-string descendants}
	\label{sec:cm}
	\subsection{GW invariants with a tangency}
	\label{subsec:grav_inv}
	This section starts with a review of Gromov-Witten invariants with a tangency. This definition will soon be modified to a different one, leading to $\langle \psi_{d-2}\pt\rangle_{X,d}^\en$.  The lemmas in this subsection are provided for completeness.
	
	Let $X$ be a compact monotone symplectic manifold, e.g.~a smooth Fano variety
	with a K\"ahler symplectic form.
	Fix a point $y \in X$, a (compatible) almost complex structure $J$ on $X$
	which is integrable in a neighbourhood of $y$, and a germ $Y$ of a
	$J$-complex hypersurface defined in a neighbourhood of $y$ so that $y\in Y$.
	Following \cite{CM14} consider the moduli space
	\begin{equation}
		\label{eq:m_t_1}
		\M_1(\tau_{m-1}\,\pt)_d=\left\{
		\begin{array}{l}
			u\co \C P^1\to X,\ \bar \del u=0, \ c_1(u)=d,\\
			u(0)=y ,\\
			u \text{ has local intersection} \\
			\quad \text{multiplicity }m\text{ with }Y\text{ at }u(0)=y
		\end{array}
		\right\}/Aut(\C P^1,0).
	\end{equation}
	The intersection multiplicity condition appearing above is  sometimes called a \emph{tangency condition} of order $m-1$.
	The dimension of this moduli space is
	\begin{equation}
		\label{eq:dim_M_t_1}
		2d-2(m+1).
	\end{equation}
	Indeed, it can be computed as
	$$
	2n+2d-4-2n-2(m-1).
	$$
	Here
	$2n+2d-4$
	is the dimension of the space of Chern number~$d$ holomorphic spheres with one marked points; the
	$-4$ summand is due to the quotient by $Aut(\C P^1,0)$. The $(-2n)$ and the $-(m-1)$
	summands account for the incidence condition with $y$ and the intersection multiplicity, respectively. 
	
	The moduli space is $0$-dimensional whenever $m=d-1$, and for each $d\ge 2$ the \emph{Gromov-Witten invariant with a tangency} is defined by:
	\begin{equation}
		\label{eq:dfn_tau_1}
		\langle\tau_{d-2}\, \pt\rangle_{X,d}\coloneqq \#\M_1(\tau_{d-2}\,\pt)_d\in \Z.
	\end{equation}
	(The appearance of $X$ in the subscript will be frequently omitted.)
	To make sure that this invariant is well-defined, one has to argue that the
	moduli space (\ref{eq:dim_M_t_1}) is regular for generic $J$. As usual in
	Gromov-Witten theory, the regularity for generic $J$ is easily ensured for
	\emph{simple} curves; and it turns out that all curves in this moduli space are
	in fact simple.
	
	\begin{lemma}\label{lem:m_t_1_simple}
		For any $d\ge 2$, every curve in $\M_1(\tau_{d-2}\,\pt)_d$ is simple for a
		generic $J$.
	\end{lemma}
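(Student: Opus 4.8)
The standard dichotomy says that a $J$-holomorphic sphere which is not simple factors through a multiple cover: $u = v \circ \sigma$, where $v \co \C P^1 \to X$ is simple and $\sigma \co \C P^1 \to \C P^1$ is a holomorphic branched cover of some degree $q \ge 2$.  So suppose for contradiction that $u \in \M_1(\tau_{d-2}\,\pt)_d$ is of this form.  I would first record the numerical constraints this forces on $v$.  Since $c_1(u) = d$ and $c_1$ is multiplicative under covers, $c_1(v) = d/q$; in particular $q \mid d$ and $c_1(v) =: d' = d/q < d$.  Because $X$ is monotone with minimal Chern number $\ge 1$, the curve $v$ lives in a moduli space of simple spheres whose expected dimension is $2n + 2d' - 2 \cdot 2$ after quotienting by $\mathrm{Aut}$, but more usefully, for generic $J$ the simple curves through $v$ form a space of dimension $2n + 2d' - 4$ with one marked point, i.e. $2n+2d'-6$ unparametrised.

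**The dimension count.**  The key point is that $u$ inherits from $v$ a tangency condition to $Y$ at $y$, but a \emph{cheaper} one.  Concretely, $\sigma$ has some ramification order $e \ge 1$ at the preimage $z_0 = \sigma^{-1}(0)$ (we may assume $0 = \sigma(z_0)$ by reparametrising), and the intersection multiplicity of $u$ with $Y$ at $z_0$ is $e$ times the intersection multiplicity of $v$ with $Y$ at $\sigma(z_0)$.  Since $u$ has multiplicity $m = d-1$ with $Y$, this means $v$ has multiplicity $m' := (d-1)/e$ with $Y$ at the point $v(\sigma(z_0)) = y$, and necessarily $e \mid d-1$.  Now I would run the same dimension computation as in \eqref{eq:dim_M_t_1}–\eqref{eq:dim_M_t_1} but for the simple curve $v$: the space of simple Chern number $d'$ spheres through $y$ with tangency of order $m'-1$ to $Y$ there has, for generic $J$, dimension
$$
2n + 2d' - 4 - 2n - (m'-1) = 2d' - 3 - (m'-1) = 2d' - m' - 2.
$$
For this to be nonnegative (so that such a $v$ can exist for generic $J$) we need $2d' \ge m' + 2$, i.e. $2d/q \ge (d-1)/e + 2$.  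Using $q \ge 2$ and $e \ge 1$ this gives $d \ge (d-1)/e + 2 \ge \ldots$; I would push this inequality together with the divisibility constraints $q \mid d$, $e \mid d-1$, $q,e \ge 2$ (note $q \ge 2$ and $e=1$ is the interesting edge case and must be treated) to derive a contradiction, or to corner the possibilities into a finite list that can be excluded by hand.  The cleanest version: with $e=1$ the bound reads $2d/q \ge d+1$, impossible for $q \ge 2$ and $d \ge 2$; with $e \ge 2$ one has $q \mid d$ and $e \mid d-1$ with $\gcd(d,d-1)=1$, so $qe \mid d(d-1)$ only loosely, but the inequality $2d/q - (d-1)/e \ge 2$ fails once both $q,e \ge 2$ and $d \ge 2$.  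Thus no multiply covered curve can satisfy the constraints, and every curve in $\M_1(\tau_{d-2}\,\pt)_d$ is simple.

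**The main obstacle.**  The genuine subtlety is the bookkeeping of how the tangency order behaves under the branched cover $\sigma$ when $0$ is \emph{not} a branch point of $\sigma$ (the case $e=1$): then $u$ and $v$ have the same local intersection multiplicity with $Y$ at $y$, so one does not gain anything from the tangency and must rely purely on $c_1(v) = d/q$ being strictly smaller.  I expect this $e=1$ case to be where the inequality is tightest and where one has to be careful that the relevant moduli space of simple curves is genuinely empty for generic $J$ rather than merely of negative virtual dimension — but since the curves in question are simple by hypothesis, generic transversality applies and negative expected dimension does force emptiness.  A secondary point to get right is that one must quotient $v$ by its own automorphisms correctly and keep the marked point at $\sigma(z_0)$ bookkept so the dimension formula for $v$ matches the template in \eqref{eq:dim_M_t_1}; this is routine once the setup is fixed.
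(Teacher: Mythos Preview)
Your approach is the same as the paper's, but the execution has two real errors that leave the $e\ge 2$ case unproved.

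First, your dimension formula for the underlying simple curve is off by a factor of two on the tangency condition: each order of tangency is a \emph{complex} codimension-$1$ constraint, so the real codimension is $2(m'-1)$, not $m'-1$. The correct dimension, following \eqref{eq:dim_M_t_1}, is $2d'-2(m'+1)$, and the nonnegativity condition reads $d/q-(d-1)/e-1\ge 0$, not your $2d/q-(d-1)/e\ge 2$. (There is also an arithmetic slip where $2d'-4$ became $2d'-3$.)

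Second, and more importantly, you never use the elementary constraint that the local ramification order is bounded by the degree of the cover, i.e.\ $e\le q$ in your notation. Without it, your assertion that ``the inequality $2d/q-(d-1)/e\ge 2$ fails once both $q,e\ge 2$'' is simply false: for instance $d=9$, $q=3$, $e=2$ satisfies the divisibility constraints $q\mid d$, $e\mid d-1$, $e\le q$, yet your inequality holds with equality. The divisibility observations alone do not close the argument. The paper's proof is a one-liner once $e\le q$ is invoked: rewriting $d/q-(d-1)/e\ge 1$ as $(d-q)e\ge(d-1)q$ and using $e\le q$ (with $d\ge q$) gives $d-q\ge d-1$, hence $q\le 1$, contradiction. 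Your $e=1$ case is fine (and indeed is the tightest case, as you suspected); it is the $e\ge 2$ case where your argument is incomplete.
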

	
	\begin{proof}
		Suppose a curve $u\in \M_1(\tau_{d-2}\,\pt)_d$ is a degree $p$ branched cover
		over a simple curve $\tilde u$, and the cover $u\to\tilde u$ has ramification
		order $q\le p$ at the origin (the tangency point).
		Then $c_1(\tilde u)=d/p$ and $\tilde u$ intersects $Y$ with multiplicity
		$(d-1)/q$ at the origin. Hence $\tilde u$ belongs to the space
		$$
		\M_1\left(\tau_{\frac{d-1}{q}-1}\,\pt\right)_{d/p}.$$
		Since simple curves are regular for generic $J$, the dimension of this moduli space has to be $\ge 0$. In view of
		(\ref{eq:dim_M_t_1}) this means
		$$
		d/p-(d-1)/q-1\ge 0
		$$
		or equivalently
		$$
		(d-p)q\ge (d-1)p.
		$$
		But $q\le p$ so this implies $d-p\ge d-1$, or equivalently $p=1$. Hence $u$ is
		simple.
	\end{proof}

Below, the stable map compactification $\bM_1(\tau_{m-1}\,\pt)_d$
shall mean the following.
Consider the moduli space $\M_1(\pt)_d$
 of all Chern number~$d$ spheres with one marked point passing through $y\in X$, without any tangency condition. This space has the usual Kontsevich stable map compactification
  $\bM_1(\pt)_d$.
 There is an inclusion
 $$
 \M_1(\tau_{m-1}\,\pt)_d\subset \M_1(\pt)_d,
 $$ 
 and by
 $\bM_1(\tau_{m-1}\,\pt)_d$
 one denotes its closure 
 in  $\bM_1(\pt)_d$.

\begin{lemma}
	\label{lem:M_codim_2}
All compactification strata of  $\bM_1(\tau_{m-1}\,\pt)_d$ are  of complex codimension~1 and higher.
\end{lemma}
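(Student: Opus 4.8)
The plan is to organise the argument by the combinatorial (dual graph) type $\Gamma$ of the domains of the nodal configurations. A compactification stratum consists of stable maps $u\co\Sigma\to X$ with one marked point $z_0$, $u(z_0)=y$, $c_1(u)=d$, arising as Gromov limits of sequences $u_j\in\M_1(\tau_{m-1}\,\pt)_d$, where $\Sigma$ is a tree of spheres with some number $e\ge1$ of nodes. Writing $\mathcal S_\Gamma$ for the corresponding locus inside $\bM_1(\tau_{m-1}\,\pt)_d$, I would show that $\dim_\C\mathcal S_\Gamma\le d-m-2$ for every such $\Gamma$; since $\dim_\C\M_1(\tau_{m-1}\,\pt)_d=d-m-1$ by (\ref{eq:dim_M_t_1}), this is exactly the asserted complex codimension $\ge1$.

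First I would record the standard inputs, valid for generic $J$: every non-constant component of a configuration in $\bM_1(\pt)_d$ factors through a simple curve, and simple curves are regular (as in the proof of Lemma~\ref{lem:m_t_1_simple}), so the locus $\mathcal T_\Gamma\subset\bM_1(\pt)_d$ of stable maps of degree $d$ through $y$ with dual graph $\Gamma$ (no tangency imposed) has $\dim_\C\mathcal T_\Gamma\le(d-2)-e$, the multiply covered part being of strictly smaller dimension. In particular, if $e\ge m$ then already $\dim_\C\mathcal S_\Gamma\le\dim_\C\mathcal T_\Gamma\le d-m-2$, so from now on I may assume $1\le e<m$ and must exhibit $m-e$ further complex conditions coming from the tangency.

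For this, let $\Sigma_y\subset\Sigma$ be the connected component of $u^{-1}(y)$ containing $z_0$; it is a subtree assembled from the constant components sent to $y$ together with finitely many points on the non-constant components through $y$. Let $p_1,\dots,p_r$ be the nodes at which $\Sigma_y$ is attached to the rest of $\Sigma$, lying on non-constant components $C_1,\dots,C_r$ (when $z_0$ sits on a non-constant component this degenerates to the single marked point, $r=1$), and set $m_i=\mathrm{ord}_{p_i}(f\circ u|_{C_i})$, where $Y=\{f=0\}$ in the integrable neighbourhood of $y$. Since the $p_i$ are distinct nodes of $\Sigma$, $r\le e$. The key point, discussed below, is the \emph{conservation of the tangency order} under Gromov degeneration: $\sum_{i=1}^r m_i\ge m$. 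Granting it, the requirement that $C_i$ be tangent to $Y$ to order $m_i-1$ at $p_i$ — over and above $C_i(p_i)=y$, which is already built into $\mathcal T_\Gamma$ once $z_0$ lies on $\Sigma_y$ — imposes, for generic $J$ and by the same Sard--Smale argument as in Lemma~\ref{lem:m_t_1_simple}, $m_i-1$ independent complex conditions on the $C_i$-factor; the conditions on distinct $C_i$ being independent, one gets
$$
\dim_\C\mathcal S_\Gamma\ \le\ \bigl((d-2)-e\bigr)-\sum_{i=1}^r(m_i-1)\ \le\ (d-2)-e-(m-r)\ \le\ d-m-2,
$$
using $r\le e$. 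This yields the codimension bound for every $\Gamma$.

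The step I expect to be the real obstacle is the conservation statement $\sum_i m_i\ge m$. When $z_0$ ends up on a non-constant component $C_0$ it is easy: the domain points $z_0^j$ converge to the smooth point $z_0\in C_0$ with $u_j\to u|_{C_0}$ in $C^\infty_{\mathrm{loc}}$ near $z_0$, so the order-$m$ zero of $f\circ u_j$ at $z_0^j$ persists in the limit and $m_1=\mathrm{ord}_{z_0}(f\circ u|_{C_0})\ge m$. The delicate case — precisely the phenomenon responsible for the difference between the $\tau$- and $\psi$-invariants stressed in the introduction — is when $z_0$ lands on a constant bubble mapped to $y$: then the tangency cannot be read off a single component, and one must track, via Gromov compactness together with positivity of intersections of $J$-holomorphic curves with the $J$-complex germ $Y$ in the integrable neighbourhood of $y$, how the $m$ units of local intersection multiplicity of $u_j$ with $Y$ near $z_0^j$ are distributed in the limit; they end up accounted for exactly by $m_1+\dots+m_r$ along the branches leaving $\Sigma_y$. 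This is the content of the corresponding lemma of Cieliebak--Mohnke \cite{CM14}, which I would either cite directly or reprove along these lines.
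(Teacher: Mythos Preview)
Your approach is essentially the paper's, repackaged: both hinge on the conservation-of-tangency lemma of Cieliebak--Mohnke (the paper cites \cite[Lemma~7.2]{CM07}; your \cite{CM14} reference should likely be to the earlier paper) and a dimension count on the non-constant branches leaving the ghost locus at $y$. Your organisation via the dual graph and the clean inequality chain using $r\le e$ is a bit more uniform than the paper's case split (bubbling away from $z_0$ versus at $z_0$), but the substance is identical.

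One genuine omission: you implicitly assume $f\circ u|_{C_i}\not\equiv 0$ near $p_i$, i.e.\ that no non-constant branch $C_i$ is locally contained in the germ $Y$. If this failed, $m_i$ would be undefined and your count would collapse. The paper addresses this explicitly: since $Y$ is only a \emph{germ} of a hypersurface and $J$ is generic outside a small neighbourhood of $y$, no non-constant rational curve through $y$ can sit inside $Y$ there. You should insert this observation before defining the $m_i$. A smaller point: your justification ``since the $p_i$ are distinct nodes'' for $r\le e$ is not literally correct in the case $r=1$, $p_1=z_0$ (a marked point, not a node), though the inequality still holds because $e\ge 1$; and the appeal to ``the same Sard--Smale argument as in Lemma~\ref{lem:m_t_1_simple}'' for transversality of the jet conditions is a slight misdirection, since that lemma concerns simplicity rather than transversality of tangency constraints.
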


\begin{proof}
	The curves in $\M_1(\tau_{m-1}\,\pt)_d$ can undergo bubbling which may or may not involve the marked point of tangency.
	In the first case, it follows from monotonicity in a standard way that the bubbled configurations are of codimension~1 and higher.
		
	The second type of bubbling has one special case when the stable curve inheriting the tangency condition
	becomes contained in $Y$ near the tangency point $y$. If $Y$ were a globally defined divisor, this would have been an issue,
	but for a generic germ $Y$ of a hypersurface and generic $J$, there are no \emph{non-constant} rational curves in $X$ that pass through $y$ and are contained in $Y$ in the neighburhood of $y$. (This is ensured by perturbing $J$ in a non-integrable way in a bigger neighbourhood of $y$, keeping it integrable in a smaller one.)
	
	So it remains to consider the case when the component of the stable curve inheriting the tangency condition
	is a constant curve at $y$.  
Consider such a stable map $u\co C\to X$ modelled on a tree $T$ and denote by: 
\begin{itemize}
	\item 
	$e\in T$  the edge corresponding to the component containing $z_0$,
	\item $T'\subset T$ the maximal subtree containing $e$ and consisting of components that are all contracted to the point $y$, and $C_0$ the corresponding sub-curve,
\item   $C_1,\ldots,C_l$ the sub-curves of the domain curve corresponding to the connected components of $T\setminus T'$, 
	\item
	and $z_i\in C_i$ the marked points attached to the curves in $T'$ so that $u(z_i)=y$. 
\end{itemize}
By \cite[Lemma~7.2]{CM07}, the intersection multiplicity $m$ with $Y$ gets distributed between the curves $C_i$ namely:
$$
\sum_{i=1}^l u\cdot_{z_i} Y\ge m
$$
where $u\cdot_{z_i} Y$ is the intersection multiplicity at $z_i$.
Suppose that $u_\epsilon \in \M_1(\tau_{m-1}\,\pt)_d$ is a sequence of elements Gromov converging to a stable map $u\in \bM_1(\pt)_d$ as $\epsilon\to 0$.
Using the notation above, let $m_i=u\cdot_{z_i}Y$ and $d_i=c_1(C_i)$.  One has $\sum d_i=d$.
The dimension of the moduli space containing $C_i$ is at most
$$
2d_i-2m_i-2,
$$
where the equality is achieved when $C_i$ has a single component (no extra bubbles).
The sum of these dimensions is
$$
2d-2m-2l.
$$
The dimension of the moduli space of the ghost component $C$ mapping to the point $y$ is 
$$2(l+1)-6=2l-4;$$ this is the dimension of the moduli space $\M_{l+1}$ of $l+1$ marked points on $\C P^1$. The resulting total sum of dimensions is at most
$$
2d-2m-4=2d-2(m+1)-2,
$$
as claimed.
\end{proof}

\begin{lemma}
	\label{lem:gw_inv}
The number $\langle\tau_{d-2}\,\pt\rangle_{d}$ is invariant of the choice of $y$, $Y$ and $J$ as above.
\end{lemma}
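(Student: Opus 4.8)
The plan is to run the standard cobordism argument for invariance of a Gromov--Witten-type count. Fix two admissible triples $(y_0,Y_0,J_0)$ and $(y_1,Y_1,J_1)$. The space of such triples --- a compatible $J$, integrable near a point $y$, together with a germ of $J$-complex hypersurface through $y$ --- is connected: first join $y_0$ to $y_1$, then join the almost complex structures keeping each one integrable near the corresponding $y_t$, then transport the germs along. So choose a generic smooth path $(y_t,Y_t,J_t)_{t\in[0,1]}$, with regular endpoint data, joining the two triples, and form the parametrized moduli space
$$
\M_1(\tau_{d-2}\,\pt)_d^{[0,1]}=\bigl\{(t,u): t\in[0,1],\ u\in \M_1(\tau_{d-2}\,\pt)_{d}\text{ for the data }(y_t,Y_t,J_t)\bigr\}.
$$
For a generic path this will be a compact oriented $1$-manifold whose boundary is $\M_1(\tau_{d-2}\,\pt)_{d,0}\sqcup \M_1(\tau_{d-2}\,\pt)_{d,1}$, and the lemma follows because a compact oriented $1$-manifold has signed boundary count zero. (The relevant moduli spaces of simple genus-$0$ curves carry canonical orientations, and the tangency condition is cut out by a complex-linear equation on the normal jet, so the zero-dimensional count is a well-defined signed integer and the cobordism respects orientations.)

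Establishing that $\M_1(\tau_{d-2}\,\pt)_d^{[0,1]}$ is a smooth manifold for a generic path reuses the argument of Lemma~\ref{lem:m_t_1_simple}. First, every curve in it is simple: if $u$ at time $t$ were a $p$-fold cover, $p\ge2$, of a simple curve $\tilde u$ with ramification order $q$ at the tangency point, then $\tilde u$ lies, for the data at time $t$, in $\M_1(\tau_{(d-1)/q-1}\,\pt)_{d/p}$ (here $p\mid d$ and $q\mid(d-1)$, since $c_1(u)=p\,c_1(\tilde u)$ and the intersection multiplicity at the branch point scales by $q$). In a generic $1$-parameter family the simple curves $\tilde u$ form a manifold of dimension $\bigl(2d/p-2(d-1)/q-2\bigr)+1$; nonemptiness forces $2d/p-2(d-1)/q-2\ge-1$, hence $\ge 0$ by parity, which as in Lemma~\ref{lem:m_t_1_simple} gives $p=1$, a contradiction. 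With simplicity in hand, the usual Sard--Smale transversality argument makes $\M_1(\tau_{d-2}\,\pt)_d^{[0,1]}$ a smooth $1$-manifold with boundary lying over $t\in\{0,1\}$.

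It remains to prove compactness, i.e.\ the parametrized analogue of Lemma~\ref{lem:M_codim_2}. A sequence in $\M_1(\tau_{d-2}\,\pt)_d^{[0,1]}$ has a subsequence with $t_i\to t_\infty$ and $u_i$ Gromov-converging to a stable map for the data at $t_\infty$. Running the proof of Lemma~\ref{lem:M_codim_2} verbatim --- including the point that, for a generic path, there is no non-constant rational curve through $y_t$ contained in $Y_t$ near $y_t$, achieved as before by a non-integrable perturbation in a larger neighbourhood --- shows that every compactification stratum has complex codimension $\ge1$, hence real codimension $\ge2$ in each fibre. In the $1$-dimensional parametrized family such a stratum has expected dimension $\le 1-2<0$, so for a generic path it is empty, and the Gromov limit is an honest element of some $\M_1(\tau_{d-2}\,\pt)_{d}$. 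Thus $\M_1(\tau_{d-2}\,\pt)_d^{[0,1]}$ is a compact oriented $1$-manifold with boundary and the two endpoint counts agree. The step that genuinely needs care --- the main obstacle --- is exactly this last one: checking that letting the germ $Y$ move in the family creates no new boundary phenomena (constant ghost bubbles inheriting the tangency, or components sinking into $Y$), which is precisely the content of the parametrized version of Lemma~\ref{lem:M_codim_2}.
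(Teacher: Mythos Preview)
Your proposal is correct and follows essentially the same approach as the paper: form the parametrized moduli space over a generic homotopy of $(y,Y,J)$, observe it is a compact $1$-manifold because Lemma~\ref{lem:M_codim_2} rules out bubbling in codimension one, and conclude by the cobordism argument. The paper's proof is just a terse two-line version of what you have spelled out, including the implicit appeal to the parametrized version of Lemma~\ref{lem:m_t_1_simple} for simplicity.
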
	
\begin{proof}
	Consider a 1-dimensional moduli space $\M_1(\tau_{d-2}\,\pt)_d$ corresponding to a generic homotopy of the data $y,Y,J$ in a 1-parametric family. By Lemma~\ref{lem:M_codim_2}, it undergoes no bubbling.   
\end{proof}	

	\subsection{Two stabilisations}
\label{subsec:stab}
This subsection is again mostly provided for completeness. It brings the setting closer towards the actual definition needed.

The domains of the curves from the moduli space (\ref{eq:m_t_1}) are unstable:
they only have one marked point. Although it was shown above that this is not a
problem for the regularity of the moduli space, for an
argument in Section~\ref{sec:pr} it will be convenient to work with curves over stable domains, i.e.~domains having at least 3 marked points.
Two natural ways of stabilising the moduli problem (\ref{eq:m_t_1}) will now be discussed, to spell out a numerical difference.

Fix an integer $p\ge 2$.
Let $\Sigma_i\subset X$, $i=1,\ldots,p$, be auxiliary oriented smooth
codimension~2 submanifolds Poincar\'e dual to $Nc_1(X)$. One does not require the
$\Sigma_i$ to be complex or even symplectic. Consider the moduli space
\begin{equation}
\label{eq:m_t_p_diff}
\M_{p+1}(\tau_{m-1}\,\pt,\Sigma_1,\ldots,\Sigma_p)_d=\left\{
\begin{array}{l}
u\co \C P^1\to X,\ \bar \del u=0, \ c_1(u)=d,\\
u(0)=y ,\\
u(z_i)\in \Sigma_i,\quad i=1,\ldots,p,\\
u \text{ has local intersection} \\
\quad \text{multiplicity }m\text{ with }Y\text{ at }u(0)=y
\end{array}
\right\}.
\end{equation}
Here $z_i\in \C P^1$ are distinct marked points which are free to vary in the
domain.
Recall that $Y$ is a germ of a hypersurface as earlier. For any $p$ this moduli
space has the same dimension as 
$\M_{p+1}(\tau_{m-1}\,\pt)_d$ given by (\ref{eq:dim_M_t_1}).
For any $d\ge 2$, $p\ge 2$ and $N\ge 1$ one has:
\begin{equation}
\label{eq:tau_comparison_many}
\langle\tau_{d-2}\,\pt\rangle_d=\frac{1}{(Nd)^p}\cdot
\#\M_{p+1}(\tau_{d-2}\,\pt,\Sigma_1,\ldots,\Sigma_p)_d.
\end{equation}
Indeed, consider the forgetful map
$$
\M_{p+1}(\tau_{d-2}\,\pt,\Sigma_1,\ldots,\Sigma_p)_d\to
\M_{1}(\tau_{d-2}\,\pt)_d.
$$
It has degree $(Nd)^p$ because the intersection number between every curve 
$u\in \M_{1}(\tau_{d-2}\,\pt)_d$
and $\Sigma_i$ equals $Nd$. Moreover the
preimages of those intersection points in $\C P^1$ are different for all
$i$ for generic $J$, since $\Sigma_i\cap \Sigma_j\subset X$ is a codimension~4
submanifold. Hence marking the preimage of any such intersection by $z_i$
recovers the full preimage of the forgetful map, and its degree is evidently
$(Nd)^p$.

A similar
argument is used to prove that the right hand side of
(\ref{eq:tau_comparison_many}), i.e.~the count of the moduli space
(\ref{eq:m_t_p_diff}), is invariant under generic choices of $y$, $Y$, $J$ and $\Sigma_i$.
For this one needs to rule out bubbling happening to
(\ref{eq:m_t_p_diff}) when these data are changed in a 1-parametric
family. Using monotonicity one easily rules out any non-constant sphere bubbles,
and the remaining case to consider is when two different marked points $z_i$,
$z_j$ collide producing a constant bubble.
Forgetting the constant bubble, the main part of the curve (the one inheriting the
tangency condition)  acquires an incidence condition to $\Sigma_i\cap
\Sigma_j$ which is a codimension~4 submanifold; this makes the Fredholm index of the
moduli problem drop by $2$, so this does not generically happen.

For the second stabilisation scheme, suppose $X$ is a Fano manifold with a K\"ahler symplectic form.
Fix an integer $p\ge 2$ and a \emph{single}
oriented smooth \emph{complex} codimension~1 divisor $\Sigma\subset X$
Poincar\'e dual to $Nc_1(X)$. 
Equip $X$ with an almost complex structure $J$ which preserves $\Sigma$ and is
integrable in a neighbourhood of $\Sigma$.
Consider the moduli space
\begin{equation}
\label{eq:m_t_p}
\M_{p+1}(\tau_{m-1}\,\pt,\Sigma,\ldots,\Sigma)_d=\left\{
\begin{array}{l}
u\co \C P^1\to X,\ \bar \del u=0, \ c_1(u)=d,\\
u(0)=y ,\\
u(z_i)\in \Sigma,\quad i=1,\ldots,p,\\
u \text{ has local intersection} \\
\quad \text{multiplicity }m\text{ with }Y\text{ at }u(0)=y
\end{array}
\right\}.
\end{equation}
As earlier, $z_i\in \C P^1$ are distinct marked points which are free to
vary in the domain.
For any $p$ this moduli space again has the same dimension as 
$\M_{p+1}(\tau_{m-1}\,\pt)_d$ given by (\ref{eq:dim_M_t_1}).
For any $d\ge 2$, $N\ge 1$ and $Nd\ge p\ge 2$ one has:
\begin{equation}
\label{eq:tau_comparison}
\langle\tau_{d-2}\,\pt\rangle_d=\tfrac{1}{(Nd)(Nd-1)\ldots(Nd-p+1)}\cdot
\#\M_{p+1}(\tau_{d-2}\,\pt,\Sigma,\ldots,\Sigma)_d.
\end{equation}
This is to be compared with (\ref{eq:tau_comparison_many}) which has a different
numerical factor.
Indeed, consider the forgetful map as earlier
$$
\M_{p+1}(\tau_{d-2}\,\pt,\Sigma,\ldots,\Sigma)_d\to \M_{1}(\tau_{d-2}\,\pt)_d.
$$
This time, 
it has degree 
$(Nd)(Nd-1)\ldots(Nd-p+1)$.
This is true because any curve  $u\in \M_{1}(\tau_{d-2}\,\pt)_d$
intersects $\Sigma$ at precisely $Nd$ points by positivity of intersections, and
any ordered subcollection of $p$ points among them (without repetitions) can be
labelled by the $z_i$.

As earlier, it is to be explained why the  collision of marked points $z_i=z_j$
does not generically occur. This is again a codimension~2 phenomenon but for a
reason slightly different than in the previous setting. This time, assuming
$z_i$ and $z_j$ collide creating a constant bubble, the main part of the curve
acquires an incidence condition with $\Sigma$ with a \emph{tangency}, i.e.~local
intersection multiplicity with $\Sigma$ at least~two \cite[Lemma~7.2]{CM07}.
The tangency condition drops the Fredholm index of the corresponding moduli
problem by~2, much like in the previous case.

\subsection{Enumerative descendants}
\label{subsec:psi_inv}
As before, let $X$ be a compact monotone symplectic manifold. This time, it is important to work with stable curves from the beginning. Fix one smooth divisor $\Sigma$ dual to $Nc_1(X)$, as in the second setting from the previous subsection. Fix a point $y\in X$,  $y\notin \Sigma$, and an almost complex structure $J$ preserving $\Sigma$ which is integrable in a neighbourhood of $y$. Fix a germ $Y$ of a
$J$-complex hypersurface in a neighbourhood of $y$, so that $y\in Y$. Consider the following moduli space for each $m\ge 1$.

\begin{equation}
\label{eq:m_psi}
\M_{Nd+1}(\psi_{m-1}\,\pt,\Sigma)_d=\left\{
\begin{array}{l}
0,z_1,\ldots,z_{dN}\in \C P^1, \\
u\co \C P^1\to X,\ (du-X_H\otimes \beta)^{0,1}=0, \ c_1(u)=d,\\
u(0)=y ,\\
u(z_i)\in \Sigma,\quad i=1,\ldots,dN,\\
u \text{ has local intersection} \\
\quad \text{multiplicity }m\text{ with }Y\text{ at }u(0)=y
\end{array}
\right\}.
\end{equation}

There are two differences from (\ref{eq:m_t_p}). First, the number of marked points is $dN$, which is the intersection number $[u]\cdot \Sigma$. That is, the preimages of all intersection points of $u(\C P^1)\cap \Sigma$ are marked by the $z_i$; this can be considered as a special case of (\ref{eq:m_t_p}). 

The second difference is the crucial one; it will be responsible for the fact that the counts become different from those of (\ref{eq:m_t_p}). Namely, the holomorphicity equation  is perturbed by a Hamiltonian term: $(du-X_H\otimes \beta)^{0,1}=0$. Here:

\begin{itemize}
	\item $H\co X\to \R$ is a function supported in a neighbourhood of $y$, and such that the Hamiltonian vector field $X_H$ is transverse to $Y$;
	\item $\beta$ is a 1-form on $\C P^1$ which depends on the modulus of the curve, i.e.~on the positions of the $z_i$, with the following property. It is supported in an annulus surrounding $0\in \C P^1$ such that the other marked points $z_i$ do not belong to it, and are located to the other side of the annulus with respect to $0\in \C P^1$. For concreteness,  one may assume that $\beta$ equals $dt$ times a cutoff function, where $t$ is a radial co-ordinate on the annulus: this gives a more common version of Floer's equation on the annulus.   
\end{itemize}

The provision that the $z_i$ are to the other side of $0\in\C P^1$ also prescribes where the annulus region should go under bubbling. Namely, the annulus always stays on the component containing the marked point $0$ under any domain degeneration, with one exception: when the bubbling happens at the point $0\in \C P^1$, and does not come from a domain degeneration at all.
Figure~\ref{fig:two_bubbles} shows these two possibilities.

\begin{figure}[h]
	\includegraphics{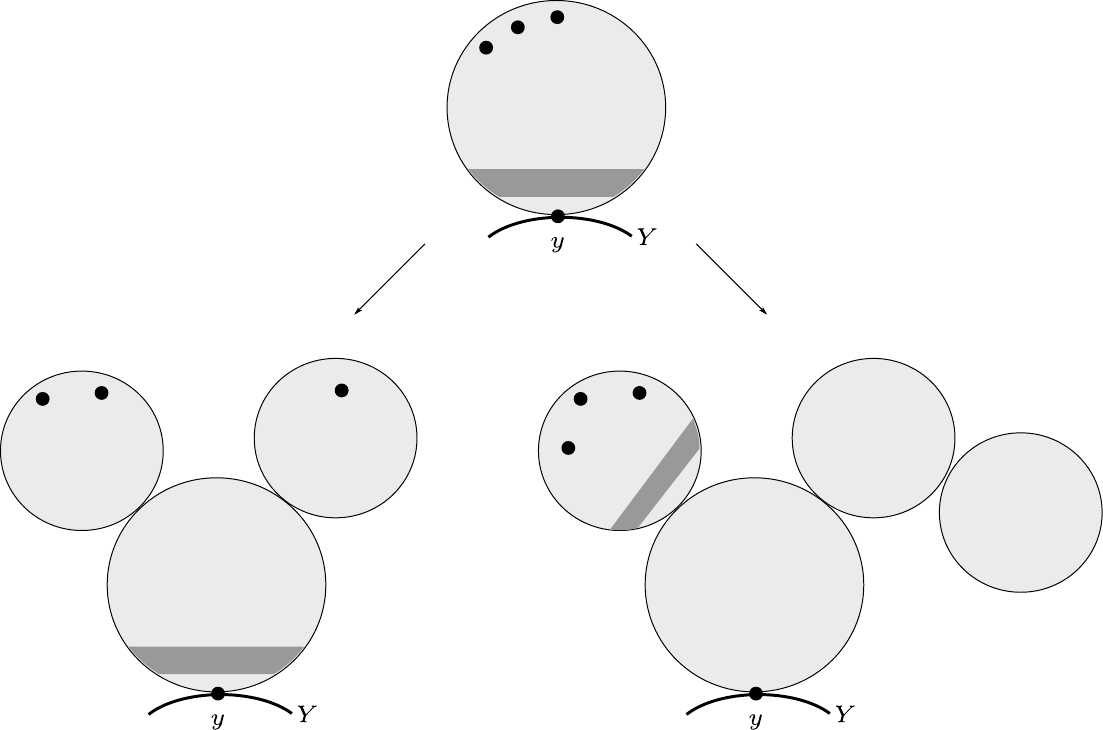}
	\caption{Two possibilities for bubbling at the tangency point $y$. The left one comes from domain degeneration, and the right one from bubbling at $y$. The shaded region contains the Hamiltonian perturbation. The marked points are the $z_i$, they are required to map to $\Sigma$.}
	\label{fig:two_bubbles}
\end{figure}

The lemma below says that curves in these moduli spaces cannot develop constant bubbles at the tangency point.

\begin{lemma}
	\label{lem:no_const_bub}
Whenever a curve in	
$\M_{Nd+1}(\psi_{m-1}\,\pt,\Sigma)_d$ bubbles, the component inheriting the marked point $0$ cannot be contained in $Y$.  
\end{lemma}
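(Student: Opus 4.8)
The plan is to rule out, in turn, the two ways the component $C_0$ inheriting the marked point $0$ could come to lie in $Y$: either $C_0$ is a constant ghost at $y$, or $C_0$ is non-constant but its branch at $0$ lies in $Y$. The first possibility is excluded by the Hamiltonian perturbation, the second by genericity of the germ $Y$ and of $J$, exactly as in the proof of Lemma~\ref{lem:M_codim_2}. So suppose, for contradiction, that $u\co C\to X$ is a nodal limit of curves in $\M_{Nd+1}(\psi_{m-1}\,\pt,\Sigma)_d$ whose component $C_0\ni 0$ has $u(C_0)$ contained in $Y$ near $y=u(0)$.

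First I would locate the perturbation annulus relative to $C_0$. By the placement of $\beta$ — supported in an annulus surrounding $0$ with all of the $z_i$ on the far side — there are, under bubbling, only the two possibilities depicted in Figure~\ref{fig:two_bubbles}: either the annulus stays on $C_0$, or $C_0$ is a sphere bubble formed at $0$ by energy concentration, in which case the annulus remains on the component to which $C_0$ is attached and $C_0$ carries no perturbation term. In the second case $C_0$ is non-constant by construction. In the first case, if $C_0$ were the constant map to $y$, restricting $(du-X_H\otimes\beta)^{0,1}=0$ and substituting $du=0$ would give $(X_H(y)\otimes\beta)^{0,1}=0$; since $\beta$ is not identically zero on $C_0$, this forces $X_H(y)=0$, contradicting that $X_H$ is transverse to $Y$ at $y\in Y$. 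Hence in all cases $C_0$ is non-constant.

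Next I would use that a neighbourhood of $0$ in $C_0$ is genuinely $J$-holomorphic: the annulus carrying $\beta$ surrounds $0$, so a small disc around $0$ on $C_0$ is disjoint from it (and if $C_0$ carries no perturbation at all, the whole of $C_0$ is $J$-holomorphic). Thus the germ of $u|_{C_0}$ at $0$ is a non-constant $J$-holomorphic branch through $y$ with image in $Y$. For a generic germ $Y$ of a hypersurface through $y$ — with $J$ integrable in a small neighbourhood of $y$ and perturbed non-integrably in a slightly larger one, as arranged in the proof of Lemma~\ref{lem:M_codim_2} — no branch of a non-constant $J$-holomorphic curve through $y$ is contained in $Y$. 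This contradiction proves the claim.

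The step I expect to be the main obstacle is the dichotomy asserted in the second paragraph: justifying, through the stable-map/SFT compactness underlying these moduli spaces, that under any bubbling the perturbation annulus either stays with the component carrying $0$ or, exceptionally, $C_0$ is an energy-concentration bubble at $0$ with no perturbation — in particular, that no constant ghost component can end up carrying $0$ while $\beta$ slips onto a different component. This is exactly what the requirement that the $z_i$ sit on the opposite side of the annulus from $0$ is built to guarantee, and it is the only place the specific construction of $\beta$ is used; granting it, the lemma rests solely on transversality of $X_H$ to $Y$ and genericity of $Y$.
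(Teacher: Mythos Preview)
Your dichotomy matches the paper's, but the two cases are dispatched quite differently, and your Case~1 treatment has a small gap.

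In Case~1 (the annulus stays on $C_0$), the paper does not split into constant and non-constant sub-cases. It argues in one stroke: if $u(C_0)\subset Y$ then on the annulus $u$ maps into the $J$-complex hypersurface $Y$, so $du$ is tangent to $Y$; projecting the equation $(du-X_H\otimes\beta)^{0,1}=0$ to the normal bundle of $Y$ gives $(X_H^\perp\otimes\beta)^{0,1}=0$, which forces $X_H^\perp=0$ wherever $\beta\neq 0$, contradicting transversality. Your constant-sub-case argument is exactly this specialised to $du=0$; you could have run it verbatim for non-constant $u$ and been done. Instead you route the non-constant case through genericity of $Y$, and the step ``the germ of $u|_{C_0}$ at $0$ is non-constant'' is not justified: unique continuation does not give it, since the constant map to $y$ is \emph{not} a solution across the annulus (so there is no second solution to compare against). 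A priori $u$ could be constant on the inner disc and only become non-constant where $\beta$ turns on, in which case the germ at $0$ is constant and your genericity argument is inapplicable. The fix is trivial---extend the normal-projection argument to all maps into $Y$---but as written there is a gap.

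In Case~2 (energy-concentration bubble at $0$), your genericity argument is valid: $C_0$ is entirely $J$-holomorphic and non-constant, so its germ at $0$ is genuinely a non-constant holomorphic branch in $Y$, which a generic pair $(Y,J)$ excludes. The paper's argument is different and stronger: it shows this bubbling configuration cannot occur at all. The bubble $C_0$ carries none of the $z_i$, so by monotonicity $C_0\cdot\Sigma=N\,c_1(C_0)>0$; but the $Nd$ marked points $z_i$ on the remaining components already realise the total intersection number with $\Sigma$, a contradiction. This uses the stabilising divisor rather than the germ $Y$, and proves more (no bubble at $0$ regardless of where it lands), which is cleaner for the downstream application to the sections $\sigma_m$.
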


\begin{proof}
Consider the case shown	in Figure~\ref{fig:two_bubbles} (bottom left). In this case the component inheriting the marked point $0$ contains the annulus with a Hamiltonian perturbation. Because $X_H$ is transverse to $Y$, any map into $Y$ is not a solution.

Consider the case shown in  Figure~\ref{fig:two_bubbles} (bottom right). In this case the configuration contains a non-constant purely holomorphic curve without any of the marked points $z_i$. This curve intersects $\Sigma$ positively, by monotonicity. However, the marked points $z_i$ already realise the intersection number of the whole configuration with $\Sigma$, which gives a contradiction.
\end{proof}	

Define 
\begin{equation}
\label{eq:def_psi_t}
\langle \psi_{d-2}\pt\rangle_{X,d}^\en\coloneqq \frac 1{(dN)!}\cdot\# \M_{Nd+1}(\psi_{d-2}\,\pt,\Sigma)_d.
\end{equation}
One can directly show that this number does not depend on the choice of $y,Y,\Sigma$, and $J$ satisfying the above conditions. To prove the independence on $\Sigma$, one may argue as follows. Fix two divisors $\Sigma_1,\Sigma_2$, of degrees $N_1,N_2$ and assume that they form a normal crossings configuration, so that there is a $J$ preserving both of them. Consider the moduli space similar to (\ref{eq:m_psi}) with $dN_1+dN_2$ marked points, where $dN_1$ points are constrained to $\Sigma_1$ and $dN_2$ points are constrained to $\Sigma_2$. A rigid moduli space of this form has forgetful maps to the moduli spaces as in (\ref{eq:m_psi}) with respect to $\Sigma_1$ and $
\Sigma_2$ of degrees $(dN_1)!$ and $(dN_2)!$ respectively. This argument is to be compared with the previous subsection. The independence of (\ref{eq:def_psi_t})  on $\Sigma$ follows.

However, this argument is unnecessary, as the next subsection will prove that (\ref{eq:def_psi_t}) equals the actual $\psi$-invariant, up to a normalising factor.

\subsection{Comparison lemma}
Let $\bM_1(X,d)$ be the Kontsevich stable map compactification of the space of rational $J$-holomorphic curves $(u\co \C P^1\to X,\, z\in \C P^1)$ of Chern number~$d$ and with one marked point.
Let $\L\to \bM_1(X,d)$ be the line bundle whose geometric fibre at a stable map $(u\co C\to X,\, z)$ is $T^*_zC$.
The  gravitational descendant Gromov-Witten invariant of the point class is defined as follows:
\begin{equation}
\label{eq:psi_class_2}
\langle\psi_{d-2}\,\pt\rangle_{d}\coloneqq \int_{[\bM_1(X,d)]}c_1(\L)^{d-2}\cup ev^*([pt]),
\end{equation}
see \cite[Definition~10.11]{CoKa99}, \cite[Section~4]{Man99}. Recall that since $X$ is  monotone, $\bM_1(X,d)$ compactifies $\M_1(X,d)$ by  strata of positive complex codimension, and the  fundamental class $[\bM_1(X,d)]$ is defined in a straightforward way \cite{MDSaBook}. Above, the point class $[pt]\in H^{2n}(X)$ corresponds to the point constraint $y$ as in (\ref{eq:m_t_1}).

Let $\Sigma$ be a divisor dual to $Nc_1(X)$, as above.
By a version of the divisor axiom, compare with (\ref{eq:tau_comparison}):
$$
\langle\psi_{d-2}\,\pt,\Sigma,\ldots,\Sigma\rangle_{d}=(dN)!\cdot  \langle\psi_{d-2}\,\pt\rangle_{d}\cdot 
$$
The left hand side is defined using moduli spaces with $1+dN$ marked points free to vary in the domain, where $dN$ points are required to map to $\Sigma$, and the rest of the definition is analogous to (\ref{eq:psi_class_2}). Note that the left hand side is not quite a standard GW invariant: the standard one  would use $dN$ different copies of $\Sigma$ rather than one copy, in which case the factor in the above formula would be $(dN)^{dN}$ instead of $(dN)!$.

As a final modification, one introduces a Hamiltonian perturbation to the above moduli spaces to make them exactly as in (\ref{eq:m_psi}). This changes the moduli spaces by a cobordism, and $\langle\psi_{d-2}\,\pt,\Sigma,\ldots,\Sigma\rangle_{d}$ does not change, by the cobordism invariance of $c_1(\cL)$.
One arrives at the following.

\begin{equation}
\label{eq:psi_alt_3}
\langle\psi_{d-2}\,\pt\rangle_{d}=
\frac {1}{(dN)!}
\int_{[\M_{Nd+1}(\psi_{0}\,\pt,\Sigma)_d]}c_1(\L)^{d-2}.
\end{equation}

The integration happens over the moduli space (\ref{eq:m_psi}) with $m=1$. The incidence condition with $y$ (with no extra tangency condition) is already incorporated into $\M_{Nd+1}(\psi_{0}\,\pt,\Sigma)_d$, hence (\ref{eq:psi_alt_3}) has no $ev^*([pt])$ term. 
One is now in a position to compare (\ref{eq:def_psi_t}) with
(\ref{eq:psi_alt_3}).

\begin{proof}[Proof of Lemma~\ref{lem:desc}]
The proof elaborates on the discussion found in the paper of Gathmann~\cite{Gat02}, cf.~Vakil~\cite{Va99}.
For brevity, write for each $m\ge 1$:
$$
\M(m)\coloneqq \M_{Nd+1}(\psi_{m-1}\,\pt,\Sigma)_d
$$
for the moduli spaces from (\ref{eq:m_psi}), with a tangency condition at $y\in Y$ of order $m-1$, and a Hamiltonian perturbation as described above. Let $\bM(m)$ be its stable map compactification inside $\M(0)$.

It is shown in \cite[Section~2]{Gat02}
that for each $m\ge 1$, there is a section of
$\sigma_m$ of $\L^{\otimes m}$ over $\bM(m)$
which, by definition, computes the $m$th normal jet to $Y$ at $y\in Y$  of a holomorphic curve passing through $y$.
Recall that the Hamiltonian perturbation vanishes in the relevant region of the domain, so these jets are defined as in the purely holomorphic context.

It is easy to see that the zero-locus of $\sigma_m$ inside $\bM(m)$ equals $\bM(m+1)$.  
This fact crucially uses Lemma~\ref{lem:no_const_bub}: if there were boundary components of  $\bM(m)$ with a constant bubble inheriting the tangency point, $\sigma_m$ would vanish on such strata identically as well. Those strata need not belong to $\bM(m+1)$.

Using Lemma~\ref{lem:no_const_bub}, one can also show
that the zero loci of the sections $\sigma_1,\sigma_2,\ldots$ are generically transverse to each other.
Taking the intersection $\sigma_1^{-1}(0)\cap\ldots \cap \sigma_m^{-1}(0)$, one concludes:
$$
(m-2)!\cdot c_1(\L)^{m-2}\cdot [\bM_1]=[\M(m-1)].
$$
The factor $(m-2)!$ above is due to the fact that $\sigma_i$ is a section of $\L^{\otimes i}$ rather than $\L$. 
In paticular,
$$
(d-2)!\cdot c_1(\L)^{d-2}\cdot [\bM_1(\pt)]=[\M(d-1)].
$$
This amounts to Lemma~\ref{lem:desc}.
\end{proof}

\begin{remark}
	\label{rmk:comp_1}
	This remark essentially repeats what is mentioned in Siegel's \cite{Sie19}.
Although the relevant point has been stressed already, it can be helpful to discuss where the proof of Lemma~\ref{lem:desc} breaks down if one considers the moduli spaces of purely holomorphic curves (\ref{eq:m_t_p}) or (\ref{eq:m_t_1}), instead of (\ref{eq:m_psi}) which have a Hamiltonian perturbation. The key point is that Lemma~\ref{lem:no_const_bub} fails.
To see the consequence of it,
write
	$$
	\M(m)\coloneqq \M_{1}(\tau_{m}\,\pt,\Sigma)_d
	$$
	for the moduli spaces (\ref{eq:m_t_1}).
	
	The boundary strata of the stable map compactification  $\bM(m-1)$ inside $\bM(0)$ are again of complex codimension~1 and higher, see Lemma~\ref{lem:M_codim_2}.
	Now they come in two types: those strata where the component of the stable curve inheriting the tangency marked point is not a constant map to the point $y$, and those where it is. (The stable maps belonging to the strata of the second type were decribed in the proof of Lemma~\ref{lem:M_codim_2}.) 
	Let $$
	D(m-1)\subset \bM_1(m-1) 
	$$
	be the union of strata of the second type.
	The section $\sigma_m$ from the above proof now vanishes on
	$$
	\sigma_m^{-1}(0)=
	D(m-1)\cup \bM(m)\subset \bM(m-1).
	$$
	The common intersection of the zero-loci of the $\sigma_i$ computes the GW invariant with a tangency plus an extra contribution coming from the $D$-strata, which is not straightforward to determine. Hence the discrepancy between $\langle \tau_{d-2}
	\,\pt\rangle $ and $\langle \psi_{d-2}
	\,\pt\rangle$.  
\end{remark}

\begin{remark}
	\label{rmk:comp_2}
Consider what happens when the Hamiltonian perturbation in (\ref{eq:m_psi}) is turned off to zero. Holomorphic curves (\ref{eq:m_psi}) converge to purely holomorphic curves (\ref{eq:m_t_p}) computing the GW invariant with a tangency, or bubbled configurations of such curves. Generically, these extra bubbled configurations are unavoidable. This is because a version of Lemma~\ref{lem:M_codim_2} fails in the presense of the disappearing Hamiltonian term. Specifically, what fails is a version of \cite[Lemma~7.2]{CM07}: in the case of a constant bubble at $y$, the sum of intersection multiplicities of the non-constant purely holomorphic components with $Y$ may be smaller than the initial intersection multiplicity (\ref{eq:m_psi}). The reason is that nearby solutions of (\ref{eq:m_psi}) may have negative intersections with $Y$ near $y$ over the region supporting the Hamiltonian perturbation.
\end{remark}

	\subsection{Stretching after Cieliebak and Mohnke}
	\label{subsec:CM}
A beautiful stretching argument due to Cieliebak and Mohnke appearing in \cite{CM14} is now recalled. 
Let $X$	be a closed monotone symplectic $2n$-manifold and $L\subset X$ a monotone Lagrangian torus or, more generally, a monotone Lagrangian submanifold admitting a metric of non-positive sectional curvature.

Choose one of the two zero-dimensional moduli spaces, (\ref{eq:m_t_1}) or (\ref{eq:m_psi}), with $m=d-1$, and call it $\M(J)$ to remember the dependence on $J$. 
Recall the following result due to Charest and Woodward \cite[Theorem~3.6]{CW17}, building on previous work of Auroux, Gayet, Mohsen \cite{AGM01}, and Donaldson \cite{Do96}.
For every monotone Lagrangian submanifold $L$, there exists a smooth manifold $\Sigma\subset X$ which is disjoint from $L$ and dual to $Nc_1(X)\in H^2(X)$. Moreover, one can ensure that $\Sigma$ is dual to $2N\mu_L\in H^2(X,L)$ where $\mu_L$ is the Maslov class. If (\ref{eq:m_psi}) is used, a choice of such $\Sigma$ is assumed.

Choose the fixed point $y$ to lie in $L$: $y\in L$. As above, choose a $J$ which is integrable in a neighbourhood of $y$, and  a germ $Y$ of a hypersurface near $y$. For each $r>0$, consider the almost complex structure $J_r$ which is obtained by stretching $J$ along the boundary of a fixed Weinstein neighbourhood of $L$, using $r$ as the stretching parameter. See e.g.~\cite{EGH00,BEE,CM05} for the general stretching procedure, \cite{CM14} for the specific argument being explained, and e.g.~\cite{Hi04, HiLi14, DRE14,DGI16} for neck-stretching applied in related problems. Since the $J_r$ are unmodified near $y$, one may consider the rigid moduli spaces $\M(J_r)$ 
for each $J_r$. 

The SFT compactness theorem \cite{EGH00,BEE,CM05} states that as $r\to +\infty$,
the curves in the moduli space $\M(J_r)$
	converge to broken holomorphic buildings composed of punctured holomorphic curves in $T^*L$, $S^*L\times \R$, and $X\setminus L$.
Here $S^*L$ is the unit cotangent bundle of $L$.

There is a component $u$ of the limiting holomorphic building which lies in $T^*L$ and inherits from (\ref{eq:m_t_1}) resp.~(\ref{eq:m_psi}) the incidence condition at the fixed point $y\in L$ having intersection multiplicity $d-1$ with $Y$. If (\ref{eq:m_psi}) was chosen, this curve also inherits the annulus with a Hamiltonian perturbation. Let $p$ be the number of punctures of $u$ and $\gamma_1,\ldots,\gamma_p$ its (unparametrised) asymptotic Reeb orbits which correspond to closed geodesics on $L$ with respect to a  metric chosen in advance.  

Following \cite{CM14}, a simple action argument shows that the lengths of the geodesics that can potentially arise as asymptotic Reeb orbits from the above stretching are bounded by an apriori constant which depends on the size of the Weinstein neighbourhood of $L$ embeddable into $X$.
By the non-negative sectional curvature assumption, there exists a metric on $L$ all of whose closed geodesics up to any given length satisfy \cite{CM14}
$$
\mu(\gamma_i)\le n-1.
$$
Here $\mu$ stands for the Conley-Zehnder index, compare with Section~\ref{subsec:sh} below.
It also holds that $0\le \mu(\gamma_i)$, but this will not be used. 

Accounting for the tangency condition, the Fredholm index of the moduli problem satisfied by $u$ equals \cite{CM14}
$$
(n-3)(2-p)+\textstyle\sum_i\mu(\gamma_i)-(2n-2)-2(d-2)=p(3-n)+\textstyle\sum_i\mu(\gamma_i)\ge 0;
$$
it must be positive for regular curves. The regularity will be treated later in Section~\ref{sec:pr}; now it is taken for granted. In the above formula, $-2(n-2)$ accounts for the condition of passing through the fixed point $y\in T^*L$, and $-2(d-2)$ accounts for the tangency condition. 

Next, observe that 
$$
p\le d.
$$
Indeed, $T^*L$ has no contractible Reeb orbits so every puncture of $u$ gives rise to at least one holomorphic cap in $X\setminus L$ which topologically corresponds to a disk in $(X,L)$ of Maslov index at least~2, by the monotonicity of $L$. Since the original curve before the breaking was a Chern number~$d$ sphere, the limiting building has no more than $d$ caps.

Combining the inequalities in the three display formulas above, the only possible solution is found to be:
\begin{equation}
\label{eq:deg_cm_stretch}
p=d,\quad \mu(\gamma_i)=n-1.
\end{equation}
Furthermore it follows that the whole broken building looks as shown in Figure~\ref{fig:gw_stretch}, right. It consists of precisely $d+1$ components where one component is the curve $$u\subset T^*L$$ inheriting the tangency condition, and the remaining $d$ curves 
$$
w_1,\ldots,w_d\subset X\setminus L
$$
are holomorphic planes in $X\setminus D$ topologically corresponding to Maslov index~2 disks in $X\setminus D$; note that the curves $w_i$ are automatically simple. The $\gamma_i$ is the asymptotic orbit of $w_i$.

If  problem (\ref{eq:m_psi}) is chosen, the extra marked points $z_i$ are distributed among these planes, and the effect of this is analysed later.
Additionally, the curve $u$ inherits the annulus with a Hamiltonian perturbation.

This is where the  relevant part of the argument of Cieliebak and Mohnke stops: it proves at this point  that $L$ bounds a Maslov index~2 disk $w_i$, establishing  the Audin conjecture which was one of the main goals of \cite{CM14}. 
The next two sections give a second and more careful look at the corresponding moduli spaces; this leads to the definition of the Borman-Sheridan class and gravitational descendants of Liouville domains. 

\section{Borman-Sheridan class}
\label{sec:bs}
\subsection{Introduction}
 Liouville domains are a certain type of symplectic manifolds whose boundary $\del M$ is convex, hence is a contact manifold \cite{EGH00,BEE}.
Each Liouville domain has an associated completion: the result of attaching an infinite collar to $\del M$. For brevity, this paper usually does not distinguish between a Liouville domain and its completion; hopefully this does not create any confusion.

\begin{example}
Two important examples of Liouville domains are the cotangent bundle $T^*L$ of a smooth manifold $L$, and the complement $X\setminus\Sigma$ to an ample (not necessarily smooth) divisor in a compact K\"ahler manifold $X$.
\end{example}

Let $X$ be a closed monotone symplectic manifold, for example a compact Fano variety.
Let  $M\subset X$
be a symplectically embedded Liouville domain. For example, one may take $M=X\setminus \Sigma$ as in the previous example, or $M$ could be a Weinstein neighbourhood of a Lagrangian submanifold $L\subset X$.
Suppose $M$ satisfies the condition called \emph{monotonicity} introduced below which generalises the notion of a monotone Lagrangian submanifold. Then one can define the Borman-Sheridan class 
$$
\BS\in SH^0(M)
$$
and its $S^1$-equivariant version
$$
\BS\in SH^0_{S^1,+}(M).
$$
Roughly speaking, the Borman-Sheridan class controls the deformation of holomorphic curve theory as one passes from $M$ to $X$.
Its name originates from the ongoing work \cite{BS16}, and related ideas can be traced back to the earlier works of Fukaya \cite{Fuk06} and Cieliebak and Latschev \cite{CL09}. The first definition of the Borman-Sheridan class that appeared in the literature is due to Seidel \cite{Sei16}, and was given in the Calabi-Yau context. 

In the present setup, the Borman-Sheridan class has been defined by the author \cite{To17} using the Hamiltonian framework of symplectic cohomology (under a condition slightly stronger than monotonicity, although very similar).  This section gives a brief definition of the Borman-Sheridan class in the SFT framework. The SFT version is slightly easier to define, but assumes that a suitable virtual perturbation scheme for holomorphic curves has been fixed. 

In the special case when $\del M$ has no contractible Reeb orbits, the transversality difficulties of the  SFT framework disappear and virtual perturbations are not required. This is completely enough for the aims of this paper: the proof of Theorem~\ref{th:lg_per} only uses the Borman-Sheridan class for $M=T^*T^n$ which has no contractible Reeb orbits.

\subsection{Symplectic cohomology}
\label{subsec:sh}
Let $M$ be a Liouville domain with $c_1(M)=0$.  
An important symplectic invariant of $M$ is the symplectic cohomology $SH^*(M)$ and its relative, the positive equivariant symplectic cohomology $SH^*_{S^1,+}(M)$.
There are two  definitions of symplectic cohomology, using the Hamiltonian and the Symplectic Field Theory frameworks; both are quickly reminded.

\subsubsection*{Hamiltonian definition}
The classical definition of $SH^*(M)$ takes the direct limit of the Floer cohomologies of a cofinal family of time-perturbed Hamiltonians with fast linear growth near $\del M$. The references include~\cite{FH94, CFH95,  Oa04, Sei08,CFO10,Bourgeois2009,Ri13}. This paper adopts the cohomological grading convention of \cite{Ri13,Sei08}: the Floer differential has degree $+1$, there is a natural map
$H^*(M)\to SH^*(M)$, and the Viterbo isomorphism \cite{Vi96,SW06,ASc06,ASc10,Ab15} reads
$$
SH^*(T^*L)\cong H_{n-*}(\L L)
$$
where $\L L$ is the free loop space of $L$.  For any Hamiltonian in the cofinal family of sufficiently high slope, its 1-periodic orbit are called the generators of the Floer complex $CF^*(M)$. 

The generators of $CF^*(M)$ have the following geometric description: every unparametrised periodic Reeb orbit $\gamma\subset \del M$ gives rise to two Hamiltonian orbits $\hat{\gamma}$,  $\check \gamma$. Additionally, there are constant orbits corresponding to the critical ponts of the Hamiltonian on $M$.
The degree in $CF^*(M)$  is denoted by $|\cdot|$, and one has  $$|\check\gamma|=|\hat{\gamma}|+1.$$

\subsubsection*{SFT definition}
The second definition of $SH^*(M)$, using the SFT framework, is given in \cite{BO16,BEE,EO17}, see also \cite{Ek12,EkLe17,EHK17}. In \cite{BO16} the symplectic cohomology is denoted by $\overline{NCH}_*^{lin}(M)$ and called   \emph{filled  non-equivariant linearised contact homology}; while \cite{BEE,EO17} keep the notation $SH^*(M)$ which is adopted here. In this version the complex $CF^*(M)$ is generated by the critical points of a Morse function on $M$,  and two formal generators $\hat{\gamma}$, $\check{\gamma}$ associated to each (good) periodic Reeb orbit $\gamma\subset\del M$ (no longer thought of as being  1-periodic orbits of any specific Hamiltonian). The differential counts purely holomorphic cylinders between the Reeb orbits augmented by holomorphic planes, with respect to a $J$ which is cylindrical at  infinity of a domain. 

For a general Liouville domain $M$, the SFT definition of $SH^*(M)$ requires a choice of a virtual perturbation scheme making the moduli spaces regular; see \cite{FO3Book,Ho04,Baho15,Pa16}. An example of a Liouville domain whose SFT version of symplectic cohomology can be defined using elementary transversality arguments is $T^*L$ where $L$  is a smooth manifold admitting a metric of non-positive curvature, for instance, the torus \cite{BO09}. 

The  domains of curves whose counts define various operations on the symplectic cohomology, like the Floer differential or the product, by definition carry asymptotic markers at each puncture. This is true both in the Hamiltonian and the SFT setups. In the Hamiltonian setup the markers are required to define the Floer equation with a time-dependent Hamiltonian. In the SFT setup, asymptotic markers are not needed to define the holomorphic equation \emph{per~se}; the way they appear in moduli problems is as follows. Whenever $\check\gamma$ is the input asymptotic of a holomorphic curve, or $\hat\gamma$ is an output, the asymptotic marker on the curve at that cylindrical end must go to a given point on the geometric Reeb orbit $\gamma$, fixed in advance. In the other two scenarios, when $\hat\gamma$ is an input or $\check{\gamma}$ is the output, the asymptotic marker is unconstrained.

The grading conventions for symplectic cohomology in the SFT setup \cite{BO16} are different from those used in the Hamiltonian one. It is worth spelling out the difference.
One denotes the Conley-Zehnder index of an unparametrised Reeb orbit $\gamma\subset\del M$ by $$\mathrm{CZ}(\gamma)=\mu(\gamma).$$ When $M=T^*L$ and $\gamma$ corresponds to a geodesic in $L$, $\mu(\gamma)$ is the Morse index of that geodesic. When $L$ has non-negative curvature, the Morse indices lie within the range $[0,n-1]$, $n=\dim L$.
The SFT grading of $\gamma$ is given by $\bar\mu(\gamma)$ where
$$
\bar\mu(\gamma)=\mu(\gamma)+n-3,
$$
if $\dim M=2n$.
For example, $\bar\mu(\gamma)$ is the dimension of the moduli space of unparametrised holomorphic planes in $M$ asymptotic to $\gamma$.
Finally, one has
\begin{equation}
\label{eq:degree_dic}
\begin{array}{l}
|\check{\gamma}|=n-\mu(\gamma)=2n-3-\bar\mu(\gamma),\\
|\hat{\gamma}|=n-1-\mu(\gamma)=2n-4-\bar\mu(\gamma).
\end{array}
\end{equation}
where $|\cdot|$ is the cohomological degree in the Hamiltonian setup, used in this paper.

\subsubsection*{Positive $S^1$-equivariant symplectic cohomology}
This version of symplectic cohomology can again be defined using either the Hamiltonian or the SFT framework. The latter version  is also called {\it linearised contact homology}; see e.g.~\cite{BO09,BO16}. This paper uses the SFT version, which is now recalled. 

The complex $$CF^*_{S^1,+}(M)=CC^*_{lin}(M)$$ 
has one generator $\gamma$ associated to each (good) periodic Reeb orbit $\gamma\subset\del M$.  The differential counts purely holomorphic cylinders between the Reeb orbits augmented by holomorphic planes, this time without asymptotic markers. If one restricts the differential on $CF^*(M)$ considered in the previous subsection to non-constant $\hat \gamma$-orbits, one recovers the differential for $CF^*_{S^1,+}(M)$. The grading used in this paper is: $$|\gamma|=|\hat\gamma|=n-1-\mu(\gamma),$$
so that a version of the Viterbo isomorphism reads: 
$$SH^*_{S^1,+}(T^*L)\cong H_{n-*-1}(\L L/S^1,L).$$
In fact, this isomorphism will not be used here; the relevant computations will go via the non-equivariant version of the Viterbo isomorphism.

As in the non-equivariant case,
the SFT definition of $CF^*_{S^1,+}(M)$ in general requires a virtual perturbation scheme, but is given using elementary transversality arguments for a narrower class of Liouville manifolds which includes $T^*T^n$. This is enough for the purpose of proving Theorem~\ref{th:lg_per}.

\subsection{Monotone Liouville subdomains}
Let $(X,\omega)$ be a monotone symplectic manifold, and $(M,\theta)\subset X$ a (non-completed) Liouville domain symplectically embedded into $X$, where $\theta$ is a Liouville form on $M$. Assume that $c_1(M)=0$ and choose a trivialisation of the canonical bundle $K_M$. This trivialisation gives rise to the relative first Chern  class
$$
c_1^{rel}\in H^2(X,M)
$$
analogous to (twice) the Maslov class of a  Lagrangian submanifold. 
Define 
$$\omega^{rel}\in H^2(X,M)$$
by its value on any relative homology class $A\in H_2(X,M)$ as follows:
\begin{equation}
\label{eq:omega_rel}
\omega^{rel}=
\int_A \omega-\int_{\del A}\theta.
\end{equation}

\begin{definition}
	\label{def:m_monot}
A symplectic  embedding $(M,\theta)\subset X$ of a Liouville domain with boundary into a closed symplectic manifold is called \emph{monotone} if 
$
c_1^{rel}
$
and $\omega^{rel}$ are positively proportional.
\end{definition}

\begin{remark}
	If $(M,\theta)\subset X$ is monotone and $\theta'$ is a different Liouville form on $M$ such that $\theta-\theta'$ is an exact 1-form, then $(M,\theta')\subset X$ is also monotone.
\end{remark}

\begin{example}
	\label{ex:l_m_mon}
Suppose $L\subset X$ is a monotone Lagrangian submanifold, and let $M=T^*L$ be (a neighbourhood of the zero-section of) its cotangent bundle embedded into $X$ as a Weinstein neighbourhood of $L$, equipped with the Liouville form $\theta$ making the zero section exact. Recall the agreement to blur the distinction between a Liouville domain and its completion.  Then $M\subset X$ is a monotone embedding. Indeed, a class $A\in H_2(X,M)$ can be uniquely extended to a class  $A'\in H_2(X,L)$ so that $\omega^{rel}(A)=\omega(A')$ and $c_1^{rel}(A)=2\mu(A')$.
\end{example}

\begin{example}
Suppose $\Sigma\subset X$ is a (not necessarily smooth) anticanonical divisor, then $M=X\setminus \Sigma$ is monotone in $X$ when equipped with the standard trivialisation of $K_M$ and the standard Liouville form $\theta$ having a simple pole along $\Sigma$. Indeed,  a version of the Stokes formula gives
$$
\int_A\omega=\int_{\del A}\theta +A\cdot \Sigma
$$
and $A\cdot\Sigma=c_1^{rel}(A)$.
\end{example}

\subsection{Borman-Sheridan class}
\label{subsec:bs}
Let $(M,\theta)$ be a monotone Liouville subdomain in a closed symplectic manifold $X$.
For a Reeb orbit $\gamma\subset\del M$, consider the corresponding generator $\gamma\in CF^*_{S^1,+}(M)$ and assume that it has degree~zero: $|\gamma|=0$.
Set
$$
W=X\setminus M,
$$
with $\del M$ being its negative contact boundary.
In this subsection is better to distinguish the notation between $W$ and its completion
$$
\hat W=
\left(  (-\infty,0]\times \del  M\right)\sqcup (X\setminus M) 
$$
with the symplectic form
\begin{equation}
\label{eq:tilde_omega}
\tilde \omega=
\begin{cases}
d(e^r\lambda)&\textit{on } (-\infty,0]\times \del  M
\\
\omega&\textit{on } X\setminus M,
\end{cases}
\end{equation}
where $\lambda=\theta|_{\del M}$ is the contact form on $\del M$, $r\in(-\infty,0]$ is the standard collar co-ordinate and $\omega$ is the initial symplectic form on $X$.

Assuming that $\del M$ has no contractible Reeb orbits, define
$$		
\label{eq:m_bs}
		\M_{1|0}( \gamma)_W=\left\{
		\begin{array}{l}
			u\co \C\to \hat W,\ c_1^{rel}(u)=1,\\
			u\text{ is asymptotic to } \gamma \text{ at }\infty\text{ as output} 
		\end{array}
		\right\}/\mathit{Aff}(\C).
$$

When $\del M$ has contractible Reeb orbits, the moduli space $\M_{1|0}(\gamma)_W$ is defined to consist of curves as above which are additionally augmented, i.e.~they may have arbitrarily many additional punctures augmented by holomorphic planes in $M$. The details follow the usual framework of augmented curves~\cite{BO16,BEE} and are omitted.

The moduli space $\M_{1|0}(\gamma)_W$ is zero-dimensional.
In general, the dimension formula for $\M_{1|0}(\gamma)_W$ letting $| \gamma|$ and $c_1^{rel}(u)$ be arbitrary, would be
$$
2c_1^{rel}(u)+| \gamma|-2.
$$
The \emph{Borman-Sheridan class} is defined as follows:
$$
\BS=\sum_{\gamma}(\#\M_{1|0}( \gamma)_W)\cdot \gamma\ \in\  SH^0_{S^1,+}(M).
$$
It is closed because its Floer differential counts the boundary points of a one-dimensional moduli space of the same type, which causes the differential to vanish.
This class depends on $M$ and $W$, i.e.~on $M$ and its embedding into $X$.
The monotonicity  condition implies the invariance of the Borman-Sheridan class; 
this is an analogue of the statement that the count of Maslov index~2 disks with boundary on a monotone Lagrangian submanifold is an invariant. A preliminary lemma is necessary first.

\begin{lemma}
	\label{lem:areas}
		Consider a holomorphic curve $u\co C\to \hat W$ asymptotic to a Reeb orbit and its relative homology class
		$$[u]\in H_2^{BM}(\hat W)\cong H_2(X,M)$$
		where $BM$ stands for Borel-Moore homology, i.e.~the homology of locally compact chains.
		Under this identification, it holds that
		$$
		\tilde \omega(u)=\omega^{rel}(u)
		$$
		where the two symplectic forms are from (\ref{eq:tilde_omega}) and (\ref{eq:omega_rel}).
\end{lemma}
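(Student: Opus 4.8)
The plan is to compute $\tilde\omega(u)$ directly, by splitting $\hat W$ into its collar $(-\infty,0]\times\del M$ and its bulk $W=X\setminus M$ (the latter a compact manifold with boundary $\del M$), running Stokes' theorem on the collar part of the curve, and matching the outcome term by term against the definition \eqref{eq:omega_rel} of $\omega^{rel}$.

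First I would fix the decomposition. Shrinking $M$ slightly along the inward Liouville flow --- which changes neither $[u]$, nor $\tilde\omega(u)$, nor $\omega^{rel}(u)$, the last because $\omega|_M=d\theta$ is exact and one applies Stokes on the intervening collar annulus --- one may assume $u$ meets $\del M$ transversally. Write the domain as $C=C_0\cup_\sigma C_-$, where $C_0=u^{-1}(W)$, $C_-=u^{-1}\big((-\infty,0)\times\del M\big)$, and $\sigma=u^{-1}(\del M)$ is a finite union of circles, oriented as the boundary of $C_0$. Under the identification $H_2^{BM}(\hat W)\cong H_2(X,M)$ used in the statement, $[u]$ is represented by the relative cycle $u|_{C_0}\co(C_0,\sigma)\to(W,\del M)\hookrightarrow(X,M)$: indeed $C_-$ carries the remaining end of the curve, and inside the collar $(-\infty,0)\times\del M$ --- which retracts onto $\del M$ --- the map $u|_{C_-}$ is a homotopy from the loop $u|_\sigma$ down to the asymptotic Reeb orbit $\gamma$, so no extra relative cycle is contributed. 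Since $\theta$ restricts to $\lambda$ along $\del M$, the definition \eqref{eq:omega_rel} then gives
$$
\omega^{rel}(u)=\int_{C_0}u^*\omega-\int_\sigma u^*\lambda .
$$

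Next I would evaluate $\tilde\omega(u)=\int_{C_0}u^*\tilde\omega+\int_{C_-}u^*\tilde\omega$. On $C_0$ one has $\tilde\omega=\omega$ by \eqref{eq:tilde_omega}, so the first summand equals $\int_{C_0}u^*\omega$. On $C_-$ one has $\tilde\omega=d(e^r\lambda)$, so $u^*\tilde\omega=d\big(u^*(e^r\lambda)\big)$, and applying Stokes' theorem to the compact exhausting pieces $C_-\cap\{r\ge-R\}$ gives
$$
\int_{C_-\cap\{r\ge-R\}}d\big(u^*(e^r\lambda)\big)=-\int_\sigma u^*\lambda+e^{-R}\int_{u^{-1}\{r=-R\}}u^*\lambda ,
$$
the minus sign recording that along $\del M$ the circle $\sigma$ bounds $C_-$ with the orientation opposite to the one with which it bounds $C_0$. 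Letting $R\to\infty$, the left-hand side converges to $\int_{C_-}u^*\tilde\omega$ --- the integrand is nonnegative and the total energy is finite --- while $\int_{u^{-1}\{r=-R\}}u^*\lambda$ stays bounded, converging to the action $\int_\gamma\lambda$, by the standard asymptotic analysis of finite-energy punctured holomorphic curves \cite{EGH00,BEE,CM05}; hence the term with the $e^{-R}$ prefactor tends to $0$. Thus $\int_{C_-}u^*\tilde\omega=-\int_\sigma u^*\lambda$, and therefore
$$
\tilde\omega(u)=\int_{C_0}u^*\omega-\int_\sigma u^*\lambda=\omega^{rel}(u),
$$
which is the assertion.

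The only step needing genuine care --- the main obstacle --- is the limit at the non-compact end of $C_-$: one must invoke the exponential convergence of $u$ to the trivial half-cylinder over $\gamma$ to be sure both that the slice integral over $u^{-1}\{r=-R\}$ stays bounded (so that the $e^{-R}$ factor kills it) and that the truncated areas genuinely exhaust $\int_{C_-}u^*\tilde\omega$. All remaining signs take care of themselves, since $\sigma$ enters both computations as $\del C_0$. (When $\del M$ has contractible Reeb orbits and $u$ is augmented by holomorphic planes $v_j$ in the completion of $M$, the identical Stokes computation at each auxiliary puncture of $u$ produces a term $-\int_{\gamma_j}\lambda$ cancelled by $\tilde\omega(v_j)=\int_{\gamma_j}\lambda$, that completion being exact; so the identity persists for the full relative class.)
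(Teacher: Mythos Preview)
Your proof is correct. Both your argument and the paper's use Stokes' theorem on the collar region, but the routes diverge in how the non-compact end is handled. You work directly with the curve, splitting the domain along $u^{-1}(\del M)$ and computing $\int_{C_-}u^*\tilde\omega$ as a limit of truncated integrals, invoking finite energy and the asymptotic convergence of $u$ to a Reeb cylinder to kill the $e^{-R}$ boundary term. The paper instead compactifies: it fixes a diffeomorphism $\phi\co\hat W\to X\setminus M$ squeezing the infinite collar $(-\infty,\epsilon)\times\del M$ onto a finite one $(0,\epsilon)\times\del M$, and then compares $\phi_*\tilde\omega$ with $\omega$ on an arbitrary relative chain $A$ by a single application of Stokes on the (now compact) collar piece. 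The paper's version thus proves the identity $\phi_*\tilde\omega(A)=\omega^{rel}(A)$ for all $A\in H_2(X,M)$ at once, without any appeal to asymptotic analysis of holomorphic curves; your version is tailored to the curve $u$ but is more self-contained in that it does not require setting up the diffeomorphism $\phi$.
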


\begin{proof}
Consider a collar $C\subset X\setminus M$ of $\del M$  and identify it with
$$
C= (0,\epsilon)\times \del M,\quad \omega|_C=d(e^r\lambda)
$$	
where $r\in (0,\epsilon)$ and $\lambda=\theta|_{\del M}$. The same collar embeds into $\hat W$ extending the infinite end of $\hat W$ to an embedding
$$
\tilde C= (-\infty ,\epsilon)\times \del M\subset\hat W,\quad \tilde \omega|_{\tilde C}=d(e^r\lambda).
$$
Fix  a monotone bijective function $ (-\infty,\epsilon)\to (0,\epsilon)$ which is the identity in a neighbourhood of $\epsilon$. It defines a diffeomorphism $\tilde C\to C$ which extends by the identity to the diffeomorphism $\phi\co \hat W\to X\setminus M$.
The claim is that for any 
relative homology class $A\in H_2(X,M)=H_2(X\setminus M,\del M)$, it holds that
\begin{equation}
\label{eq:phi_om}
\phi_*\tilde \omega(A)=\omega^{rel}(A).
\end{equation}
To see this, realise  $A$ as  a chain and break it into a union $A'\sqcup A''$ of chains in $C$ and $(X\setminus M)\setminus C$. One has  $\int_{A''}\omega=\int_{A''}\phi_*\tilde \omega$ because the two forms coincide on $(X\setminus M)\setminus C$. Next one has $\int_{A'}\omega=\int_{A'}\phi_*\tilde \omega+\int_{\del A}{\lambda}$ by the Stokes formula. But $\int_{\del A}{\lambda}=\int_{\del A}{\theta}$, which justifies (\ref{eq:phi_om}). This equality immediately implies the statement of the lemma.
\end{proof}

\begin{lemma}
	If $M\subset X$ is monotone,
the Borman-Sheridan class $\BS\in SH^0_{S^1,+}(M)$ does not depend on the choice of $J$ on $\hat W$ which is compatible with $\tilde \omega$ and cylindrical at infinity.
\end{lemma}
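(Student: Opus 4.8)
The plan is to prove the invariance of the cohomology class $[\BS]$ by a parametrized version of the very argument that shows $\BS$ is a cocycle, in close analogy with the proof that the count of Maslov~$2$ disks bounded by a monotone Lagrangian is independent of the almost complex structure. First I would fix two admissible almost complex structures $J_0,J_1$ on $\hat W$ (compatible with $\tilde\omega$, cylindrical at infinity) and join them by a generic path $\{J_s\}_{s\in[0,1]}$ of such structures. I may arrange that $J_s$ is $s$-independent on the negative cylindrical end $(-\infty,0]\times\del M$, so that the complex $CF^*_{S^1,+}(M)$ and its differential are literally unchanged along the path (if one does let $J_s$ vary there, one precomposes with the canonical continuation map, a standard point). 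For each good Reeb orbit $\gamma\subset\del M$ consider the parametrized moduli space
\[
\widehat{\M}(\gamma)=\{(s,u)\ :\ s\in[0,1],\ u\in\M_{1|0}(\gamma)_W\text{ with respect to }J_s\},
\]
of expected dimension $1+\bigl(2c_1^{rel}(u)+|\gamma|-2\bigr)=|\gamma|+1$. For a generic path this is cut out transversally (or regular after a virtual perturbation, in the presence of contractible Reeb orbits); it is $1$-dimensional when $|\gamma|=0$ and $0$-dimensional when $|\gamma|=-1$. Set $n=\sum_{|\gamma|=-1}\bigl(\#\widehat{\M}(\gamma)\bigr)\cdot\gamma\in CF^{-1}_{S^1,+}(M)$.

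\textbf{Bubbling is excluded by monotonicity.} The heart of the argument is that exactly as in the definition of $\BS$, the only codimension~$1$ degeneration of a $1$-dimensional $\widehat{\M}(\gamma)$ is Floer breaking. Here is where I would use Lemma~\ref{lem:areas}: the $\tilde\omega$-energy of any closed or asymptotically cylindrical component of a limiting SFT building equals the value of $\omega^{rel}$ on its relative class, so monotonicity of $M\subset X$ forces $c_1^{rel}>0$ on every nonconstant component in $\hat W$ and $c_1^{rel}=0$ on the symplectization cylinders. Since the principal plane already carries $c_1^{rel}=1$ and is nonconstant, there is no energy left for sphere bubbles in $\hat W$, for nonconstant bubbling in the symplectization, or for more than one symplectization level. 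Hence a codimension~$1$ boundary stratum is a once-broken building: a plane in $\hat W$ asymptotic to some orbit $\gamma'$, together with a rigid Floer cylinder from $\gamma'$ to $\gamma$. A dimension count shows such a configuration contributes a genuine boundary point of a $1$-dimensional $\widehat{\M}(\gamma)$ only when $|\gamma'|=|\gamma|-1=-1$; then the plane-factor is precisely an element of $\widehat{\M}(\gamma')$ and the cylinder-factor contributes the matrix coefficient $\langle d\gamma',\gamma\rangle$. When $\del M$ carries contractible Reeb orbits one runs the same count in the augmented-curve framework, and monotonicity again caps the number of components, the extra planes in $M$ merely reproducing the augmented differential.

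\textbf{Conclusion.} Consequently, for $|\gamma|=0$ the compactification $\overline{\widehat{\M}}(\gamma)$ is a compact $1$-manifold with boundary equal to the $s=0$ and $s=1$ fibers $\M_{1|0}(\gamma)_W^{J_0}\sqcup\M_{1|0}(\gamma)_W^{J_1}$ together with the broken configurations $\coprod_{|\gamma'|=-1}\widehat{\M}(\gamma')\times\{\text{rigid cylinders }\gamma'\rightsquigarrow\gamma\}$; the signed count of its boundary vanishes. Reading off the coefficient of $\gamma$ (with the usual care about coherent orientations) yields $\BS_{J_1}-\BS_{J_0}=\pm\,dn$ in $CF^0_{S^1,+}(M)$, so $[\BS_{J_0}]=[\BS_{J_1}]$ in $SH^0_{S^1,+}(M)$, as desired.

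\textbf{Main obstacle.} The routine part is the dimension and orientation bookkeeping above; the part needing genuine care is the SFT compactness-and-gluing statement that $\overline{\widehat{\M}}(\gamma)$ is a manifold with boundary whose boundary is exactly the strata enumerated above (and, in general, the accompanying virtual perturbation scheme). This is precisely where the monotonicity hypothesis enters, via Lemma~\ref{lem:areas}. I would note that for $M=T^*T^n$ the situation collapses: $\del(T^*T^n)$ has no contractible Reeb orbits, and since $CF^*_{S^1,+}(T^*T^n)$ is concentrated in non-negative degrees there are no orbits of degree $-1$, so already $n=0$ and $\BS$ is independent of $J$ on the chain level.
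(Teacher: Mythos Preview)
Your proposal is correct and follows essentially the same approach as the paper: both use Lemma~\ref{lem:areas} together with monotonicity to conclude that every nonconstant component in $\hat W$ has $c_1^{rel}\ge 1$, so a $c_1^{rel}=1$ plane in a $1$-parametric family can only break off an augmented Floer cylinder. The paper's proof is terser and leaves the cobordism/chain-homotopy bookkeeping implicit, whereas you spell out the parametrized moduli space and the correction term $n\in CF^{-1}_{S^1,+}(M)$ explicitly.
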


\begin{proof}
Any holomorphic curve $u\co\C\to\hat W$ satisfies $\tilde \omega(u)> 0$; by Lemma~\ref{lem:areas}, $ \omega^{rel}(u)> 0$.
So the monotonicity condition guarantees that  $c_1^{rel}(u)\ge 1$. This implies that 1-dimensional moduli spaces (\ref{eq:m_bs}) with varying $J$ do not undergo any SFT breaking except for the breaking of augmented Floer cylinders. Indeed, any other broken building would have at least two components mapping into $W$, and this would contradict the additivity of $c_1^{rel}$ and the fact that  the Borman-Sheridan class is defined using curves with $c_1^{rel}=1$.
\end{proof}	

\begin{figure}[h]
	\includegraphics[]{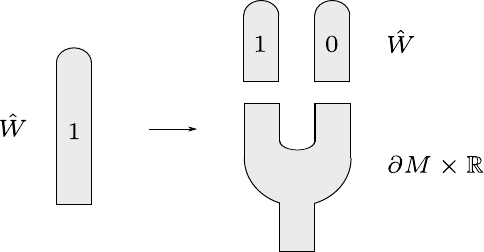}
	\caption{Bubbling responsible for the failure of the invariance of the Borman-Sheridan class in the non-monotone case. The numbers indicate $c_1^{rel}$.}
	\label{fig:bs_bubble}
\end{figure}

\begin{remark}
For a non-monotone Lagrangian submanifold $L\subset M$, 
a
Maslov index~2 disk can break into a Maslov~index~2 and a Maslov index~0 disk inside a 1-parametric family. A similar phenomenon for the Borman-Sheridan class is the breaking of a (relative) Chern number~1 plane into a Chern number~1 plane and a Chern number~0 plane. It may be useful to take a look at a model for this breaking,  shown in Figure~\ref{fig:bs_bubble}.
\end{remark}

\begin{remark}
The definition of the Borman-Sheridan class using Hamiltonians appearing in \cite{To17} uses a slightly stronger assumption on the embedding $M\subset X$ than monotonicity. The condition is that there is a smooth Donaldson divisor $\Sigma\subset X$ away from $M$, $M\subset X\setminus\Sigma$ is a Liouville embedding, and $c_1^{rel}\in H^2(X,M)$ is positively proportional to the intersection number with $\Sigma$. 
\end{remark}

\begin{lemma}
The Borman-Sheridan class lies in the image of $$SH^0_+(M)\to SH^0_{S^1,+}(M).$$
\end{lemma}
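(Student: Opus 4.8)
The plan is to exhibit an explicit preimage. Recall from Section~\ref{subsec:sh} that the positive equivariant complex $CF^*_{S^1,+}(M)$ has one generator $\gamma$ per good Reeb orbit, with a differential counting augmented holomorphic cylinders \emph{without} asymptotic markers, while the positive non-equivariant complex $CF^*_+(M)$ — the non-constant part of $CF^*(M)$ — has generators $\hat\gamma,\check\gamma$ with $|\check\gamma|=|\hat\gamma|+1=|\gamma|+1$. Under the Viterbo isomorphisms $SH^*_+(T^*L)\cong H_{n-*}(\L L,L)$ and $SH^*_{S^1,+}(T^*L)\cong H_{n-*-1}(\L L/S^1,L)$, the natural map $SH^*_+(M)\to SH^*_{S^1,+}(M)$ is fibre integration $\int_{S^1}\co H_*(\L L,L)\to H_{*-1}(\L L/S^1,L)$ — so it has $SH$-degree $0$ — and in the SFT model it is represented on chains by a map whose lowest-filtration part is the ``erasing'' map $\hat\gamma\mapsto\gamma$. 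I will produce a non-equivariant Borman-Sheridan cycle in $CF^0_+(M)$ and check that it maps to $\BS$.

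Define, for each Reeb orbit $\gamma$ with $\hat\gamma$ in degree $0$,
$$
\widetilde\M_{1|0}(\gamma)_W=\left\{
\begin{array}{l}
u\co\C\to\hat W,\ c_1^{rel}(u)=1,\\
u\text{ asymptotic to }\gamma\text{ at }\infty\text{ as output }\hat\gamma,\\
\text{asymptotic marker at }\infty\text{ constrained to a fixed point of }\gamma
\end{array}
\right\}\!\big/\mathit{Aff}_0(\C),
$$
augmented by planes in $M$ if $\del M$ carries contractible Reeb orbits, exactly as for $\M_{1|0}(\gamma)_W$; here $\mathit{Aff}_0(\C)\subset\mathit{Aff}(\C)$ is the subgroup of translations and positive rescalings. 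The marker constraint (codimension $1$) uses up the rotation, so this space has the same virtual dimension $0$ as $\M_{1|0}(\gamma)_W$. Put $\BS^{\mathrm{ne}}=\sum_\gamma\big(\#\widetilde\M_{1|0}(\gamma)_W\big)\,\hat\gamma\in CF^0_+(M)$. This is a cycle by the argument that makes $\BS$ closed: each matrix coefficient of $d\,\BS^{\mathrm{ne}}$ on a degree-$1$ generator is the signed boundary count of a corresponding $1$-dimensional moduli space of $c_1^{rel}=1$ curves in $\hat W$, and Lemma~\ref{lem:areas} together with monotonicity forces $c_1^{rel}\ge 1$ on every $\hat W$-curve, excluding any broken building with two $\hat W$-components, so that moduli space is compact up to those boundary strata and the coefficient vanishes. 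Its class is the non-equivariant Borman-Sheridan class.

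Since the natural map sends $\hat\gamma\mapsto\gamma$ in degree $0$, it is enough to see $\#\widetilde\M_{1|0}(\gamma)_W=\#\M_{1|0}(\gamma)_W$ for each $\gamma$. This is tautological on underlying sets: $\mathit{Aff}(\C)/\mathit{Aff}_0(\C)\cong S^1$ acts freely and transitively on the positions of the asymptotic marker at $\infty$, so each class in $\M_{1|0}(\gamma)_W=\{\cdots\}/\mathit{Aff}(\C)$ has a unique representative modulo $\mathit{Aff}_0(\C)$ with marker at the prescribed point, giving a canonical diffeomorphism $\M_{1|0}(\gamma)_W\cong\widetilde\M_{1|0}(\gamma)_W$; the coherent orientations are normalised so that this identification is sign-preserving. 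Hence $\BS^{\mathrm{ne}}\mapsto\BS$ already at the chain level, and $\BS$ lies in the image of $SH^0_+(M)\to SH^0_{S^1,+}(M)$.

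The step I expect to require genuine care is matching the chain-level description of the natural map $CF^*_+(M)\to CF^*_{S^1,+}(M)$ with this moduli identification: one must verify that the higher ``cascade'' corrections in that chain map do not change $[\BS^{\mathrm{ne}}]$, and one must keep careful track of asymptotic markers and their orientations — in particular in order to know that $\BS^{\mathrm{ne}}$ is closed in $CF^*_+(M)$, where, unlike in the equivariant complex, $d\hat\gamma$ may a priori have $\check\delta$-components. All of this is governed by the standard foundations of $S^1$-equivariant symplectic cohomology (the references of Section~\ref{subsec:sh}) together with the monotonicity input of Lemma~\ref{lem:areas}; alternatively one could avoid the explicit preimage and instead run the Gysin-type exact sequence relating $SH^*_+$ and $SH^*_{S^1,+}$, reducing the claim to the vanishing of a further $\hat W$-curve count, which again follows from monotonicity.
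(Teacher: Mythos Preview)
Your approach is essentially identical to the paper's: the paper defines the non-equivariant Borman-Sheridan class by counting the moduli spaces
$$
\M_{1|0}(\hat\gamma)_W=\left\{
\begin{array}{l}
u\co \C\to\hat W,\ c_1^{rel}(u)=1,\\
u\text{ asymptotic to }\gamma\text{ at }\infty\text{ as output},\\
u(\R_+)\text{ asymptotic to a fixed point on }\gamma
\end{array}
\right\}/\mathit{Aff}_+(\C),
$$
and then simply observes that there is an obvious bijection with the equivariant moduli spaces $\M_{1|0}(\gamma)_W$ after relabelling $\hat\gamma\mapsto\gamma$. Your $\mathit{Aff}_0(\C)$ is exactly the paper's $\mathit{Aff}_+(\C)$, and your bijection argument via the residual $S^1$-action is precisely the ``obvious bijection'' the paper invokes. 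Your write-up is considerably more careful than the paper's two-sentence proof: you spell out closedness via monotonicity and flag the cascade-correction and $\check\delta$-component issues in the chain map, which the paper does not address at all.
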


\begin{proof}
The non-equivariant version of the Borman-Sheridan class can be defined by counting the following moduli spaces:
	$$		
	\M_{1|0}( \hat \gamma)_W=\left\{
	\begin{array}{l}
	u\co \C\to \hat W,\ c_1^{rel}(u)=1,\\
	u\text{ is asymptotic to } \gamma \text{ at }\infty\text{ as output},\\
	u(\R_+) \text{ is asymptotic to a fixed point on } \gamma
	\end{array}
	\right\}/\mathit{Aff}_+(\C).
	$$
Theren is an obvious bijection between these moduli spaces and (\ref{eq:m_bs}),	
after relabelling the generators $\hat\gamma\mapsto \gamma$.	
\end{proof}	

\subsection{The torus case}
Consider a monotone Lagrangian torus $L\subset X$, and let
$M=T^*L$ be embedded into $X$ as a Weinstein neighbourhood of $L$. In this case the Borman-Sheridan class is nothing but a reformulation of the LG potential.
One has
\begin{equation}
\label{eq:SH_T_Laur}
SH^0(T^*T^n)\cong H_{n}(\L T^n)\cong\C[x_1^{\pm 1},\ldots,x_n^{\pm 1}].
\end{equation}
The first isomorphism is the Viterbo isomorphism, and the second one is understood by looking at the Serre fibration
$$
\Omega T^n\to \L T^n\to T^n
$$
where $\Omega T^n$ is the based loop space of $T^n$. Consider the map induced by intersection with the fibre of this fibration:
$$
i_!\co H_n(\L T^n)\to H_0(\Omega T^n).
$$
This is easily seen to be an isomorphism, and
$$
H_0(\Omega T^n)=\Z[\pi_1 T^n ]\cong \C[x_1^{\pm 1},\ldots, x_n^{\pm 1}].
$$
Next, one also has
\begin{equation}
\label{eq:SH_eq_T_Laur}
SH^0_{S^1,+}(T^*T^n)\cong \Z[\pi_1 T^n ]\cong\C[x_1^{\pm 1},\ldots,x_n^{\pm 1}]
\end{equation}
as vector spaces. This can be seen from the exact sequence 
$$\ldots \to SH^*_+(T^*T^n)\to SH^{*-1}_{S^1,+}(T^*T^n)\to SH^{*+1}_{S^1,+}(T^*T^n)\to\ldots$$
from \cite{BO16} (rewritten using the current grading convenstion) and the vanishing of the Euler class appearing in the map
$SH^{*-1}_{S^1,+}(T^*T^n)\to SH^{*+1}_{S^1,+}(T^*T^n)$.
Alternatively, one can choose a perturbaition of the standard Morse-Bott Reeb flow for which $CF^*_{S^1,+}(T^*T^n)$ is minimal up to an arbitrarily high energy truncation, and isomorphic to $\Z[\pi_1(T^n)]$ in degree zero.
\begin{lemma}
	\label{lem:bs_w}
Let $L\subset X$ be a monotone Lagrangian torus, $W_L$ its LG potential, and $\BS\in SH^0_{S^1,+}(T^*L)$ the Borman-Sheridan class for the Weinstein neighbourhood of $L\subset X$.
Under the identification (\ref{eq:SH_eq_T_Laur}), it holds that:
$$\BS=W_L\in\C[x_1^{\pm 1},\ldots, x_n^{\pm 1}].$$
\end{lemma}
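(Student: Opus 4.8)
The plan is to identify both sides of the claimed equality as counts of the same moduli space of holomorphic planes in $\hat W = \widehat{X \setminus T^*L}$, organized by the homotopy class of the asymptotic Reeb orbit. Recall that under the Weinstein identification of $M = T^*L$ with a neighbourhood of $L$, the Reeb orbits of $\partial M$ with $|\gamma| = 0$ correspond exactly to the closed geodesics on the flat torus $L = T^n$ of Morse index $n-1$; these come in $S^1$-families (in fact $T^{n-1}$-families, but for the generic metric one may perturb), one family for each primitive homotopy class, and more generally the Reeb orbits of degree zero are indexed by the nonzero elements of $\pi_1(T^n) = \Z^n$. So the generators $\gamma$ with $|\gamma| = 0$ are naturally labelled by lattice vectors $v = (v^1,\ldots,v^n) \in \Z^n$, and under the isomorphism (\ref{eq:SH_eq_T_Laur}) the generator $\gamma_v$ corresponds to the monomial $x_1^{v^1}\cdots x_n^{v^n}$ (up to a sign to be tracked). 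Thus $\BS = \sum_v (\#\M_{1|0}(\gamma_v)_W)\, x^v$, and it suffices to show that $\#\M_{1|0}(\gamma_v)_W$ equals the coefficient of $x^v$ in $W_L$, i.e.\ the signed count of Maslov index~2 holomorphic disks $(D,\partial D) \subset (X, L)$ with $[\partial D] = v$ passing through a marked point.

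The key step is a correspondence between Maslov index~2 disks with boundary on $L$ and relative Chern number~1 planes in $\hat W$. Given a Maslov~2 disk $u\colon (D,\partial D) \to (X,L)$, one applies SFT neck-stretching along $\partial(T^*L)$ (a fixed Weinstein neighbourhood boundary): the disk converges to a holomorphic building whose part in $T^*L$ is a holomorphic plane asymptotic to a Reeb orbit and whose part in $X \setminus T^*L$ is a collection of planes; by the monotonicity of $L$ and an index/energy argument identical in spirit to the one in Section~\ref{subsec:CM} (the caps contribute Maslov $\ge 2$ each, the total is $2$, so there is exactly one cap, a single plane $w \subset \hat W$ with $c_1^{rel}(w) = 1$, asymptotic to a Reeb orbit $\gamma$ with $|\gamma| = 0$), and the $T^*L$-part is a trivial plane capping it off. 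The homotopy class of $\gamma$ records $[\partial D] = v \in \pi_1(T^n)$. Conversely, a plane $w \in \M_{1|0}(\gamma_v)_W$ glues to the trivial half-cylinder in $T^*L$ to produce a Maslov~2 disk with boundary $v$. This gluing/compactness argument sets up a bijection; one then checks it is orientation-preserving, so the signed counts agree. (Since $\partial(T^*L)$ has no contractible Reeb orbits when $L = T^n$, no augmentation by extra planes can occur, and the moduli spaces are cut out transversally by the standard arguments for $T^*T^n$ referenced in Section~\ref{subsec:sh}, so there are no virtual-perturbation subtleties.)

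The remaining bookkeeping is: (i) confirming that the passage-through-a-point constraint on the disk translates correctly — the plane $w$ in $\hat W$ is counted modulo $\mathit{Aff}(\C)$ with no point constraint, which matches because the point constraint on $L$ is absorbed by the reparametrization freedom of the $T^*L$-component (the trivial plane can be translated to pass its boundary circle through any prescribed point of $L$), so the two moduli problems have the same (zero) dimension and the same count; and (ii) matching signs, i.e.\ checking that the orientation on the disk moduli space used to define the signs in $W_L$ agrees under gluing with the orientation on $\M_{1|0}(\gamma_v)_W$ coming from the coherent orientation scheme for SFT, and that this is compatible with the chosen generator identification $\gamma_v \leftrightarrow x^v$ in (\ref{eq:SH_eq_T_Laur}). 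I expect the sign comparison to be the main obstacle: both the orientation of disk moduli spaces in the Fukaya-category sense and the SFT coherent orientations are delicate, and one must also pin down the normalization in the Viterbo isomorphism (\ref{eq:SH_T_Laur})--(\ref{eq:SH_eq_T_Laur}) carefully enough to know which monomial $\gamma_v$ maps to. The topological/index part of the argument is essentially a recapitulation of the Cieliebak--Mohnke analysis already carried out in Section~\ref{subsec:CM}, specialized to a single Maslov~2 disk rather than a Chern number~$d$ sphere, so it should go through with only cosmetic changes.
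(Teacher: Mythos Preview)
Your approach is essentially the paper's: both decompose a Maslov~2 disk in $(X,L)$ into a Borman--Sheridan plane in $\hat W$ glued to a half-cylinder in $T^*L$ with Lagrangian boundary on the zero-section, and then count. The stretching/gluing step and the index argument ruling out more complicated buildings are fine.

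The gap is in your treatment of the $T^*L$-piece. You assert that it is a ``trivial half-cylinder'' and that the point constraint is ``absorbed by the reparametrization freedom''. This is not accurate as stated: once the boundary-through-$p$ constraint is imposed, the half-cylinder moduli space is rigid (zero-dimensional), and there is no reparametrization left to absorb anything. What you actually need is that this zero-dimensional moduli space has signed count exactly~$1$, and your argument does not supply this. Translating the half-cylinder by the $T^n$-action shows that \emph{a} solution exists through any $p$, but not that it is unique up to sign.

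The paper closes this gap by observing that these half-cylinders (input a degree-zero generator $\hat\gamma$, boundary on $L$ through $p$) are precisely the curves computing the Viterbo map $CF^0(T^*L)\to C_n(\L L)$, and the Viterbo isomorphism says the count is~$1$. This simultaneously fixes the identification $\gamma_v\leftrightarrow \xx^v$ in (\ref{eq:SH_eq_T_Laur}) and handles the sign, resolving your bookkeeping concerns (i) and (ii) in one stroke. For the flat torus one could alternatively verify the count directly (the half-cylinders are cotangent lifts of straight geodesics, one per class through $p$), but invoking Viterbo is the clean way.
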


\begin{proof}
Consider the non-equivariant Viterbo isomorphism. It is proved by considering moduli spaces of semi-infinite cylinders $[0,+\infty)\times S^1\to T^*L$ asympotic to generators $x\in CF^*(T^*L)$ and having free Lagrangian boundary condition on the 0-section $L$, giving rise to a quasi-isomorphism $CF^*(L)\to C_{n-*}(\L L)$. 

This holds for a general $L$, and now recall that $L$ is a torus. 
Fix a point $p\in L$ and a generator $\hat \gamma\in SH^0(T^*L)$ corresponding to a homology class $[\gamma]\in H_1(L;\Z)$. It follows from the Viterbo isomorphism that the count of the cylinders with input $\hat \gamma$ and boundary passing through $p$, equals $1$, moreover the boundary homology class of those cylinders is $[\gamma]$. Since the asymptotic marker for the $\hat \gamma$-orbit is unconstraining, this is equal to the count of analogous cylinders without an asymptotic marker.

Gluing these cylinders (see e.g.~\cite{Pa15} for the gluing in the regular setting) to the moduli space $\M_{1|0}(\gamma)_W$, one gets a sign-preserving bijection onto the moduli space of Maslov~index~2 holomorphic disks in $(X,L)$ passing through $p\in L$ and having boundary homology class $[\gamma]$.  The latter spaces count $W_L$ by definition.
\end{proof}	

\section{Gravitational descendants of Liouville domains}
\label{sec:grav}
This section
recalls the $L_\infty$ structure on the non-equivariant and the equivariant Floer complex; 
defines gravitational descendants of a Liouville domain, which are chain-level operations on its equivariant Floer complex; 
states the theorem computing gravitational descendants for the cotangent bundle of the $n$-torus; and mentions the string topology perspective of the story.

\subsection{Compactifying spaces of marked curves}
\label{subsec:dom_mark}
The first step is to introduce the moduli spaces of  domains used in the definitions of the $L_\infty$ structure and gravitational descendants. Recall the notation
$$
\begin{array}{ll}
\mathit{Aff}(\C)&=\{z\mapsto az+b\ |\ a\in \C,\ b\in\C\}
,\\
\mathit{Aff_+}(\C)&=\{z\mapsto az+b\ |\ a\in \R_+,\ b\in\C\}.
\end{array}
$$
For $k\ge 2$ consider the space $\M_{k+1}$ of $k+1$ distinct numbered marked points $$z_0,z_1,\ldots,z_k\in \C P^1$$ up to $Aut(\C P^1)$.
An \emph{marker} at a point $z_i$ is the choice of a real ray in $T_{z_i}\C P^1$; automorphisms of $\C P^1$ naturally act on  markers.

 By finding an automorphism of $\C P^1$ sending $z_0$ to the fixed point $\infty\in\C P^1$ one identifies
\begin{equation}
\label{eq:m_k_aff}
\M_{k+1}=\mathit{Conf}(\C,k)/\mathit{Aff}(\C),
\end{equation}
the configuration space of $k$ distinct points in $\C$ up to automorphisms of $\C$.
The following is an observation from \cite{EO17}. 

\begin{lemma}
	\label{lem:marker}
The choice of a marker at $z_0$ across the space $\M_{k+1}$ canonically induces a marker at each other  point $z_i$.
\end{lemma}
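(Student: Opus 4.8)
The statement is essentially a rigidity fact about the configuration space: once we have used $z_0\mapsto\infty$ to identify $\M_{k+1}$ with $\mathit{Conf}(\C,k)/\mathit{Aff}(\C)$, a marker at $z_0=\infty$ is the same data as a marker ``at infinity'' for the affine line, and I claim such a marker rigidifies the residual $\mathit{Aff}(\C)$-action down to the translation subgroup, so that the remaining points $z_1,\ldots,z_k\in\C$ acquire canonical tangent directions (the constant horizontal framing $\partial_z$). Concretely, the plan is as follows.

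First I would make precise the notion of a marker at $\infty$. Work in the coordinate $w=1/z$ near $\infty$; a real ray in $T_\infty\C P^1 = T_0(\C_w)$ is a choice of $e^{i\alpha}\R_{\ge0}\subset\C_w$. The subgroup of $\mathit{Aff}(\C)$ (acting on the $z$-line) that fixes $\infty$ is all of $\mathit{Aff}(\C)$, and its action on $T_\infty\C P^1$ is: $z\mapsto az+b$ sends $w\mapsto w/(a+bw)$, whose derivative at $w=0$ is $1/a$; so $\mathit{Aff}(\C)$ acts on markers at $\infty$ through the homomorphism $(a,b)\mapsto \arg(1/a)=-\arg a$. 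Hence the stabiliser of a given marker at $\infty$ is exactly $\{z\mapsto az+b : a\in\R_+\}$, and further quotienting by the positive scalars (which do not move the marker but do move the points) is harmless for the direction data: a real ray in $T_{z_i}\C$ is unchanged under $z\mapsto az$, $a\in\R_+$. Therefore the residual group acting on the pair (configuration, marker at $\infty$) is the translation group $\{z\mapsto z+b\}$, modulo $\R_+$-scaling which acts trivially on rays.

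Second, I would exhibit the induced marker: for each $i\ge1$ declare the marker at $z_i$ to be the ray $\R_{\ge0}\cdot\partial_z|_{z_i}$, i.e.\ the image of the given ray at $\infty$ under parallel transport by the constant vector field $\partial_z$ on $\C$ — equivalently, translate the point $z_i$ to $0$ and read off the standard framing rotated by the angle of the marker at $\infty$. Since translations and positive rescalings act trivially on this horizontal ray (translations obviously, and $z\mapsto az$ with $a\in\R_+$ scales $\partial_z$ by $a>0$ hence fixes the ray), this assignment is well-defined on $\M_{k+1}$ with its marker at $z_0$. To see it is canonical — independent of which automorphism we chose to send $z_0$ to $\infty$ — note any two such automorphisms differ by an element of $\mathit{Aff}(\C)$ fixing $\infty$, i.e.\ exactly the group analysed above, under which the construction is equivariant by the preceding paragraph. (Carrying the marker at $z_0$ along with the automorphism is what pins down the $\arg a$ ambiguity.)

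I do not expect a serious obstacle here; the one point requiring a little care is bookkeeping the two successive quotients — first by $\mathit{Aff}(\C)$ to reach the configuration model, then the implicit residual $\R_+$ from $\mathit{Aff}_+$ appearing elsewhere — and checking that the marker at $\infty$ really kills the rotational part of $\mathit{Aff}(\C)$ while the rest of the group acts trivially on the output rays. A clean way to package this is to say: the data ``$k+1$ marked points on $\C P^1$ plus a marker at $z_0$'' is the same as ``$k$ ordered points in $\C$'' modulo the group $\{z\mapsto z+b\}\rtimes\R_{>0}$ of translations and positive dilations, and this group acts trivially on the framing $\partial_z$, so the framing descends and restricts to each $z_i$. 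I would also remark (as \cite{EO17} do) that the same argument produces markers at the $z_i$ compatibly with the forgetful maps, which is the form in which the lemma gets used in defining the $L_\infty$ operations.
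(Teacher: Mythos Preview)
Your proposal is correct and takes essentially the same approach as the paper: send $z_0$ to $\infty$ with the marker normalised, observe that the residual automorphism group is $\mathit{Aff}_+(\C)$, and assign the horizontal ray $\R_{\ge 0}\cdot\partial_z$ at each $z_i$, which is preserved by $\mathit{Aff}_+(\C)$. The paper's version is terser and avoids your ``parallel transport'' gloss, which is imprecise since $\partial_z$ does not extend nontrivially to $\infty$; your actual argument, however, does not rely on that gloss.
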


\begin{proof}
For each curve $u\in \M_{k+1}$ with a given marker at $z_0$, consider any automorphism of $\C P^1$ taking $z_0$ to $\infty\in \C P^1$ and the marker to $\R_+$. Once such an automorphism has been applied, assign the marker $\R_+$ to each other marked point $z_i$.
Automorphisms of $\C P^1$ preserving $\infty$ with the marker $\R_+$ form the group 
$\mathit{Aff_+}(\C)$. The action of this group preserves the horizontal direction, which ensures that the above assignment of markers is well-defined.
\end{proof}

The space $\M_{k+1}$ has the classical Deligne-Mumford compactification $\overline \M_{k+1}$ by stable curves. The boundary strata of this compactification are the unions of $\M_j$s for $j\le k$; they are of real codimension~2 and higher.

Now consider 
\begin{equation}
\label{eq:R_k}
\cR_{k+1}=\mathit{Conf}(\C,k)/\mathit{Aff_+}(\C)\cong S^1\times \M_{k+1}.
\end{equation}

\begin{remark}
It is useful to think of this space
as the moduli  space of $k+1$ distinct points in $\C P^1$ equipped with a marker at the first point $z_0$. 
\end{remark}

The space $\cR_{k+1}$ has a natural compactification $\overline{\cR}_{k+1}$.
Geometrically, as two or more marked points in $\cR_{k+1}$ approach each other, they create a bubble with a canonically induced marker at the attaching point which points in the direction $\R_+$. For example, the collision of two points together is a codimension~1 phenomenon because this collision can happen in an $S^1$-worth of ways with respect to the marker at infinity, see Figure~\ref{fig:l_comp}. More than two points may also collide in codimension~1.

\begin{figure}[h]
	\includegraphics[]{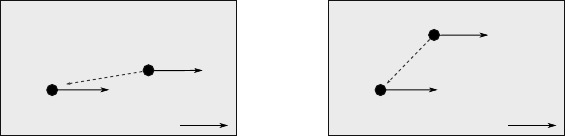}
	\caption{An $S^1$ worth of ways for two marked points to collide with respect to the asymptotic marker at infinity.}
	\label{fig:l_comp}
\end{figure}

Formally, the compactification strata of 
$\overline{\cR}_{k+1}$ are indexed by rooted trees $T$ with $k$ labelled leaves (not counting the root). A stratum corresponding to such a tree is the product of the spaces $\cR_{k_i+1}$ across the vertices $v_i\in T$, where $k_i+1$ is the valency of $v_i$:
$$
{\cR}_{k+1}^T=\prod_{v_i\in T}\cR_{k_i+1}.
$$

\subsection{$L_\infty$ structure on the Floer complex}
\label{subsec:l_infty}
Let $M$ be a Liouville domain with $c_1(M)=0$, and $CF^*(M)$ be the Floer complex computing $SH^*(M)$ graded cohomologically as in Section~\ref{subsec:sh}.
For each $k\ge 1$ there is an operation
$$
l_{neq}^k\co CF^*(M)^{\otimes k}\to CF^*(M),\quad \deg l_{neq}^k=3-2k,
$$
where $l_{neq}^1$ is the Floer differential. Here, {\it neq} stands for `non-equivariant'.
These operations turn $CF^*(M)$ into an \emph{$L_\infty$ algebra}, which means that they satisfy the following two properties. The first one is graded-commutativity: for a permutation $\sigma\in S_k$, one has
$$
l^k_{neq}(x_1,\ldots,x_k)=(-1)^\dagger\, l^k_{neq}(x_{\sigma_1},\ldots,x_{\sigma_k})
$$
where
$$
\dagger=\sum_{\stackrel{i<j:}{\sigma(i)>\sigma(j)}}|x_i|\cdot |x_j|.
$$
The second property is the $L_\infty$ relations:
\begin{equation}
\label{eq:l_inf}
\sum_{\stackrel{1\le r\le k,}{\sigma\in S_k}}(-1)^\maltese\, \tfrac{1}{r!(k-r)!}\, l_{neq}^{k+1-r}(l_{neq}^{r}(x_{\sigma_1},\ldots x_{\sigma_{r}}),x_{\sigma_{r+1}},\ldots, x_{\sigma_{k}})=0
\end{equation}
where an explicit formula for the sign number $\maltese$ is rather lengthy; the easiest way to specify the sign is to reformulate the $L_\infty$ relations in terms of a single equation on the bar complex in the way it is done in e.g.~\cite{Lat14,FO3Book}, cf.~\cite{Fuk01}. The references for this $L_\infty$ structure are \cite{Fuk06,Lat14,Fab13,EO17}. The latter reference defines the dual $L_\infty$ coalgebra, but the $L_{\infty}$ algebra definitions are analogous. See also~\cite{Sei14,PL16} for $l^2_{neq}$, the Lie bracket.

Consider the moduli space
$$
\M_{1|k}(x_0;x_1,\ldots,x_k)
$$ 
of holomorphic curves in $M$ with domain an element of $\cR_{k+1}$, where the output puncture is equipped with an asymptotic marker from the definition of $\cR_{k+1}$,
and each input puncture is equipped the induced asymptotic marker by Lemma~\ref{lem:marker}. The punctures  are asymptotic to $x_i\in CF^*(M)$ at the input punctures and to $x_0$ at the output, with the corresponding asymptotic marker constraints. The dimension formula reads
$$
\dim \M_{1|k}(x_0;x_1,\ldots,x_k)=\dim \cR_{k+1}+|x_0|-\sum_{i=1}^k|x_i|=2k-3+|x_0|-\sum_{i=1}^k|x_i|.
$$
The operations $l^k_{neq}$ are the counts of these moduli spaces when they are 0-dimensional. The graded-commutativity is immediate and the $L_\infty$ relations are derived by looking at the boundaries of the 1-dimensional moduli spaces above, compactified over $\overline\cR_{k+1}$. The orientation analysis responsible for the signs has been explained in \cite{EO17}, in the dual coalgebra setting.

\subsection{$L_\infty$ structure on the equivariant Floer complex}
There is a similar $L_\infty$ structure on the positive equivariant Floer complex:
$$
l^k\co CF_{S^1,+}^*(M)^{\otimes k}\to CF_{S^1,+}^*(M),\quad \deg l^k=3-2k.
$$
The definition, see e.g.~\cite{Sie19}, is analogous to the above with the difference that asymptotic markers are now not used, and the underlying spaces of domains are $\M_{k+1}$, not $\cR_{k+1}$. 

Let $\hat \gamma_i$ be a collection of non-constant hat-generators of the non-equivariant complex $CF^*(M)$. Since these are hat-type generators, whenever they serve as inputs for a  non-equivariant $L_\infty$ operation, all input asymptotic markers of the corresponding curves are unconstrained. So the output of this operation must have constrained asymptotic marker; otherwise the corresponding moduli space has an $S^1$-symmetry, and is not rigid. In other words, the output is also a hat-type orbit $\hat \gamma$, and is moreover non-constant. One can now forget all asymptotic markers, and get an element of the moduli space computing the $L_\infty$ structure on the equivariant complex.
So, up to renaming the output $\hat \gamma$ to $\gamma$:
\begin{equation}
\label{eq:l_neq_to_eq}
l^k_{neq}(\hat \gamma_1,\ldots,\hat \gamma_k)=l^k(\gamma_1,\ldots,\gamma_k).
\end{equation}

\subsection{Gravitational descendants}
\label{subsec:grav_sh}
As above, let $M$ be a Liouville domain with $c_1(M)=0$; fix the data defining the Floer complex $CF^*_{S^1,+}(M)$.
As in Section~\ref{subsec:grav_inv}, fix a point $y\in M$ and assume that $J$ is integrable in its neighbourhood; fix  a germ $Y$ of a hypersurface defined in that neighbourhood.

For a domain $(\C P^1,z_0,\ldots,z_k)\in \M_{k+1}$, and consider the corresponding curve with punctures $$D=\C P^1\setminus\cup_{i=1}^k\{z_i\}$$ equipped with cylindrical ends. Note that the point $z_0$ is not being punctured.
For any $k\ge 1$, $m\ge 1$, and $x_i\in CF^*_{S^1,+}(M)$, consider the \emph{gravitational descendant moduli spaces}
	\begin{equation}
	\label{eq:m_gr_sh}
	\M_{1|k}(\psi_{m-1}\,\pt,x_1,\ldots,x_k)=\left\{
	\begin{array}{l}
	(D,u):D\in\M_{k+1},\ u\co D\to M,\\ 
	(du-X_H\otimes \beta)^{0,1}=0,\\
	u(z_0)=y ,\\
	u \text{ has local intersection} \\
	\quad \text{multiplicity }m\text{ with }Y\text{ at }u(z_0)=y,
	\\
	u\text{ is asymptotic to } x_i, \ i=1,\ldots,k.
	\end{array}
	\right\}
	\end{equation}
The equation in (\ref{eq:m_gr_sh}) carries a Hamiltonian perturbation, chosen exactly as in Section~\ref{subsec:psi_inv}.

The dimension of (\ref{eq:m_gr_sh}) equals
$$
2n+2(k+1)-6-2n-2(m-1)-\textstyle\sum_i|x_i|
$$
or equivalently
\begin{equation}
\label{eq:dim_m_gr_sh}
2(k-1)-2m-\textstyle\sum_i|x_i|.
\end{equation} 

Let $\C[-2m]$ be the 1-dimensional  graded vector space concentrated in degree $-2m$. In view of the above, for each $m,k\ge 1$ one has the gravitational descendant operations
\begin{equation}
\label{eq:tau_cf_dfn}
\psi_{m-1}^k\co CF^*_{S^1,+}(M)^{\otimes k}\to \C[-2m],\quad \deg\psi_{m-1}^k=2-2k.
\end{equation}
By definition of this being a graded map, it vanishes unless
(\ref{eq:dim_m_gr_sh}) holds, in which case it is set to count
$$
\psi_{m-1}^k(x_1,\ldots,x_k)\coloneqq\#\M_{1|k}(\psi_{m-1}\,\pt,x_1,\ldots,x_k).
$$
(The right hand side has been defined on the generators of $CF^*(M)$, and it is extended by linearity to their combinations.) It is clear from the definition that these operations are invariant under permutations of the inputs up to  sign, and orientation analysis similar to \cite{EO17} reveals that
$$
\psi_{m-1}^k(x_1,\ldots,x_k)=(-1)^\dagger\, \psi_{m-1}^k(x_{\sigma_1},\ldots,x_{\sigma_k})
$$
with
$
\dagger
$
is as above.

The curves in  higher-dimensional moduli spaces (\ref{eq:dim_m_gr_sh}) can undergo the following types of bubbling:
\begin{itemize}
	\item bubbling resulting from domain degenerations, when several marked points $z_i$ come together. The combinatorics of the  limiting holomorphic buildings is governed by the underlying compactification $\bM_{k+1}$;
	\item  bubbling of (augmented) Floer cylinders attached to inputs which happen independently of domain degenerations;
	\item the bubbling of a stable constant sphere at the tangency point $y$ which results in the splitting of a curve at $y$ similarly to the way described in the proof of Lemma~\ref{lem:M_codim_2}. Analogously to that lemma, this is a codimension~2 phenomenon.
\end{itemize}

For generic 1-dimensional descendant moduli spaces, only the first two types of bubbling happens, which leads to the next theorem.

\begin{theorem}
	\label{th:gr_l_inf}
For each $m\ge 1$, the collection of maps 
$$\psi_{m-1}=\{\psi_{m-1}^k\}_{k\ge 1}$$
defines an $L_\infty$ morphism from the $L_\infty$ algebra $CF^*_{S^1,+}(M)$ to the 1-dimensional vector space $\C[-2m]$ considered as the trivial $L_\infty$ algebra. In other words, the equations below hold. \qed
$$
\sum_{\stackrel{1\le r\le k,}{\sigma\in S_k}}(-1)^\maltese\, \tfrac{1}{r!(k-r)!}\, \psi^{k+1-r}_{m-1}(l^{r}(x_{\sigma_1},\ldots x_{\sigma_{r}}),x_{\sigma_{r+1}},\ldots, x_{\sigma_{k}})=0.
$$
\end{theorem}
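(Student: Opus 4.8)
The plan is to prove the identity by the standard cobordism argument for $L_\infty$-type relations: the displayed equation is exactly the statement that the signed count of the boundary points of a one-dimensional compactified gravitational descendant moduli space vanishes.

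First I would fix $m\ge 1$, $k\ge 1$ and homogeneous inputs $x_1,\dots,x_k$. Every term in the displayed sum is the count of a zero-dimensional moduli space, and by the dimension formula (\ref{eq:dim_m_gr_sh}) all terms vanish unless $\sum_i|x_i|=2k-3-2m$; so assume this, which makes $\M_{1|k}(\psi_{m-1}\,\pt,x_1,\dots,x_k)$ from (\ref{eq:m_gr_sh}) one-dimensional. Granting the transversality statements of Section~\ref{sec:pr} — available for a general Liouville domain after fixing a consistent abstract perturbation scheme, and by elementary means when $\del M$ carries no contractible Reeb orbits — and fixing the domain-dependent Hamiltonian perturbation data coherently over $\overline{\M}_{k+1}$, the compactification $\overline{\M}_{1|k}(\psi_{m-1}\,\pt,x_1,\dots,x_k)$ is a compact oriented one-manifold with boundary.

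Next I would identify the boundary. By the three types of degeneration recorded before the statement, a boundary point is either (i) a nodal curve arising from a domain degeneration of $\M_{k+1}$, in which at least two of the input punctures collide onto a bubble component; (ii) a configuration in which a Floer cylinder computing the differential $l^1$ has broken off a single input puncture; or (iii) a stable constant sphere at $y$ inheriting the tangency. Case (iii) is of real codimension at least two and hence excluded: this is Lemma~\ref{lem:M_codim_2} adapted to the present moduli problem, using Lemma~\ref{lem:no_const_bub} to forbid the tangency-carrying component from lying in $Y$. In case (i) the bubble carrying the $r\ge 2$ colliding inputs $x_{\sigma_1},\dots,x_{\sigma_r}$ is a curve over $\M_{r+1}$ computing $l^r$; by the structure of the boundary strata of $\overline{\M}_{k+1}$ its end at the node is the output, so it is glued to a curve over $\M_{k-r+2}$ which carries $z_0$ with the point and tangency constraint and the Hamiltonian annulus, the remaining $k-r$ inputs, and the $l^r$-output orbit as an additional input — this second curve computes $\psi^{k+1-r}_{m-1}$. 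Thus each case-(i) boundary point contributes the term $\psi^{k+1-r}_{m-1}(l^r(x_{\sigma_1},\dots,x_{\sigma_r}),x_{\sigma_{r+1}},\dots,x_{\sigma_k})$, with $l^r$ extended linearly over the intermediate orbit, and case (ii) contributes the same expression with $r=1$. A standard gluing theorem matches each such broken configuration with a unique end of the one-manifold, and summing over all ways of choosing the colliding inputs — equivalently, over $S_k$ with the multiplicities $\tfrac{1}{r!(k-r)!}$ — enumerates every boundary point exactly once; compare Figure~\ref{fig:t_l_inf}, right, for $k=4$, $r=2$.

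Finally one must check that the boundary point in the stratum indexed by $(r,\sigma)$ occurs with the sign $(-1)^{\maltese}$ prescribed by the bar formalism for $L_\infty$ morphisms; I would do this by the same orientation bookkeeping as for the $L_\infty$ operations themselves, for which \cite{EO17} supplies the conventions in the dual coalgebra picture, noting that the point and tangency constraints at $z_0$ contribute fixed coorientation factors which drop out of the relative signs. Since the total signed count of the boundary of a compact oriented one-manifold is zero, the displayed identities follow; equivalently $\psi_{m-1}=\{\psi^k_{m-1}\}_{k\ge 1}$ is an $L_\infty$ morphism onto the trivial $L_\infty$ algebra $\C[-2m]$. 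The two places I expect to require the most care are the coherent choice of the Hamiltonian (and, in general, abstract) perturbations making all gluings near the codimension-one strata well defined, and the sign verification; neither should introduce difficulties beyond those already handled in \cite{EO17,Sie19}, and the transversality input is precisely what is deferred to Section~\ref{sec:pr}.
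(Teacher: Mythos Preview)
Your proposal is correct and follows essentially the same approach as the paper: analyze the boundary of the one-dimensional descendant moduli space, observe that type~(iii) degenerations are codimension~$\ge 2$ (the paper refers to Lemma~\ref{lem:M_codim_2} for this, as you do), and identify the remaining codimension-one strata with the terms of the $L_\infty$ morphism relation, deferring signs and analytic details to \cite{EO17,Sie19}. One minor inaccuracy: the transversality input you invoke is not in Section~\ref{sec:pr} but is the standard regularity package for the SFT/linearised setup discussed in Section~\ref{subsec:sh} (elementary when $\del M$ has no contractible Reeb orbits, and otherwise requiring a virtual perturbation scheme).
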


\begin{remark}
	One can equivalently say that $\psi_{m-1}$ is a \emph{shifted $L_\infty$ augmentation} of $CF^*(M)$.
\end{remark}

The second type of bubbling is responsible for the appearance of the $l^1$-terms (the Floer differential) in Theorem~\ref{th:gr_l_inf}, and the first type of bubbling is responsible for the $l^r$, $r\ge 2$. The proof of Theorem~\ref{th:gr_l_inf} is entirely analogous to \cite{Sie19}, and can e.g.~also be obtained by a modification of~\cite{EO17}.

\begin{remark}
	\label{rmk:tan_aug}
	One can consider the purely holomorphic equation in (\ref{eq:m_gr_sh}). (In this case, when $k=1$, one takes quotient by the additional $\R_+$ symmetry.) This gives rise to an $L_\infty$ augmentation discussed by Siegel \cite{Sie19}, which is genuinely different from the one this paper considers. The reasons are analogous to the ones explained in Remarks~\ref{rmk:comp_1},~\ref{rmk:comp_2}.
	
	As noted by Siegel, the purely holomorphic version of the augmentation can also be understood from the perspective of cobordisms. By neck-stretching around a spherical neighbourhood  $S^{2n-1}=\del U(y)$ of $y$,  curves computing the purely holomorphic augmentations essentially reduce to  curves in the cobordism $M\setminus U(y)$ between $\del M$ and $S^{2n-1}$. In the case of the $\psi$-version of the augmentation, one could try stretching  around a bigger neighbourhood of $y$ which encloses the Hamiltonian support. The lower parts of the building are expected to become $\psi$-augmentation curves for the ball, which seem to be non-trivial.
\end{remark}

\subsection{Non-negatively graded domains}
In view of Theorem~\ref{th:gr_l_inf}, the descendant operations $\psi_{m-1}^k$ from (\ref{eq:tau_cf_dfn}) do not generally give rise to operations at the cohomology level.
On the other hand, recall that the domain  needed for proving Theorem~\ref{th:lg_per} is $M=T^*T^n$. In this case one can arrange that $CF^*_{S^1,+}(M)=SH^*_{S^1,+}(M)$ up to an arbitrarily high action truncation, so the operations $\psi_{m-1}^k$ do automatically become cohomological operations. The  definition below is a natural generalisation of this observation.

\begin{definition}
	\label{def:nonneg}
A Liouville domain $(M,\theta)$ is said to have \emph{non-negatively graded Floer complex} if for any $A>0$, there is a Liouville 1-form $\theta'$ on $M$ such that $\theta'-\theta=df$ and such that in the Floer complex $CF^*_{S^1,+}(M)$ defined using $\theta'$, every generator of action smaller than $A$ has non-negative degree. Equivalently, $\theta'|_{\del M}$ is a contact form all of whose Reeb orbits $\gamma$ of action smaller than $A$ have Conley-Zehnder index satisfying $\mu(\gamma)\le n-1$.
\end{definition}

\begin{example}
	\label{ex:non_pos_L}
The cotangent bundle of a manifold with a non-positive sectional curvature has non-negatively graded Floer complex \cite[Lemma~2.2]{CM14}. If the manifold has negative sectional curvature, one can find a $\theta$ making all orbits have non-negative symplectic cohomology degree, otherwise this is only  achieved up to an arbitrarily high action truncation.
\end{example}

\begin{example}
Let $X$ be a compact Fano variety and $\Sigma$ a smooth divisor in the class $Nc_1(X)$ for $N\ge 1$.	Then $X\setminus \Sigma$ has non-negatively graded Floer complex \cite{GP17}; in this case there is Liouville form $\theta$ all of whose Reeb orbits have non-negative symplectic cohomology degree.
\end{example}

Consider a Liouville manifold $M$ with a non-negatively graded Floer complex. For any $k\ge 2$ and generators 
$$x_i\in CF^0_{S^1,+}(M),$$
consider the counts
\begin{equation}
\label{eq:gr_sh_0}
\langle
x_1|\ldots|x_k
\rangle_M=\psi_{k-2}^k(x_1,\ldots,x_k)\in \Z.
\end{equation}

\begin{remark}
For any Liouville domain with non-negatively graded Floer complex, the degree~0 orbits $x_i$ are all of type $\hat{\gamma}$, not $\check{\gamma}$.
\end{remark}

In the rest of the paper, it will be assumed without further notice that all  generators appearing in (\ref{eq:gr_sh_0}) have action $<A$ for a sufficiently large $A$, and that a corresponding Liouville form $\theta'$ from Definition~\ref{def:nonneg} has been fixed.

\begin{proposition}
	\label{prop:gr_0_inv}
Consider a Liouville domain  $M$ with  non-negatively graded Floer complex. Then (\ref{eq:gr_sh_0}) descend to cohomological operations $SH^0_{S^1,+}(M)^{\otimes k}\to \Z$ invariant of the choices of $y$ and $Y$, and of compactly-supported homotopies of $J$.
\end{proposition}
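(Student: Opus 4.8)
The plan is to lean on the non-negative grading twice: once to make sense of the operation on $SH^0_{S^1,+}(M)$ with no quotient involved, and once to kill, on the nose, all the correction terms that would otherwise obstruct invariance. First I would fix $A$ as large as needed and a Liouville form $\theta'$ as in Definition~\ref{def:nonneg}, so that up to the action truncation at $A$ the complex $CF^*_{S^1,+}(M)$ sits in non-negative degrees; then $CF^{<0}_{S^1,+}(M)=0$, the differential out of $CF^{-1}_{S^1,+}(M)$ vanishes, and hence $SH^0_{S^1,+}(M)=\ker\big(d\colon CF^0_{S^1,+}(M)\to CF^1_{S^1,+}(M)\big)$ --- a subspace of $CF^0_{S^1,+}(M)$, with no coboundaries to divide out. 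By (\ref{eq:dim_m_gr_sh}) with $m=k-1$ the moduli spaces in (\ref{eq:gr_sh_0}) are $0$-dimensional, and since $CF^{<0}_{S^1,+}(M)=0$ the chain-level operation $\psi^k_{k-2}$ is supported precisely on tuples of degree-$0$ generators; restricting it along $SH^0_{S^1,+}(M)\subseteq CF^0_{S^1,+}(M)$ gives the required $k$-linear map $SH^0_{S^1,+}(M)^{\otimes k}\to\Z$. Two vanishings will then do all the work: (a) for $r\ge 2$ the map $l^r$ shifts the total input degree by $3-2r\le -1$, so it vanishes identically on degree-$0$ inputs; (b) $\psi^k_{k-2}$ is a graded map to $\C[-2(k-1)]$, so it vanishes on any tuple whose degrees do not sum to zero --- in particular whenever an input is replaced by something of degree $1$, such as $dx_i$.

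For invariance, I would take two admissible choices of data $\mathbf d=(y,Y,H,\beta,J)$, with $J$ fixed near $\del M$ so that the contact form, the Reeb dynamics and the generators of $CF^*_{S^1,+}(M)$ are unchanged, connect them by a generic path $\mathbf d_s$, $s\in[0,1]$, and study the parametrised moduli space $\bigcup_{s}\{s\}\times\M^s_{1|k}(\psi_{k-2}\,\pt,x_1,\ldots,x_k)$, a compact oriented $1$-manifold with boundary. Its $s=0$ and $s=1$ fibres contribute $\pm\big(\langle x_1|\ldots|x_k\rangle_M^{\mathbf d_0}-\langle x_1|\ldots|x_k\rangle_M^{\mathbf d_1}\big)$. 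By SFT/Gromov and Deligne--Mumford compactness (no non-constant closed holomorphic spheres occur, $M$ being exact), the interior codimension-$1$ degenerations are exactly those appearing in Theorem~\ref{th:gr_l_inf}: a (possibly augmented) Floer cylinder $l^1$ breaking off at an input, which contributes $\psi^{k}_{k-2}(x_1,\ldots,dx_i,\ldots,x_k)$-type terms, and a domain degeneration leaving $z_0$ on a component reading $\psi^{k+1-r}_{k-2}(l^r(x_I),x_J)$ with $|I|=r\ge 2$ (stability of the bubble forces $r\ge 2$). The first kind vanishes by (b), the second by (a). The only other possible degeneration --- the $z_0$-component collapsing to a constant sphere at $y$ --- is of codimension $2$ (as in the proof of Lemma~\ref{lem:M_codim_2} and the third bullet after (\ref{eq:dim_m_gr_sh})) and does not appear in a generic $1$-parameter family. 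Hence the two counts agree; specialising $x_i$ to cocycles yields invariance on $SH^0_{S^1,+}(M)$, with signs handled by orientation analysis as in \cite{EO17}.

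The substantive point --- and the place where most of the real work lies --- is the last step: setting up the parametrised moduli space as a genuine compact $1$-manifold with exactly the boundary strata listed. This requires transversality for the $1$-parameter family (elementary for $M=T^*T^n$ and similar targets, but in general part of the ``suitable virtual perturbation scheme'' invoked throughout the paper) together with a complete enumeration of the SFT-compactness breakings. Everything else is bookkeeping: once the boundary is correctly identified, the two degree-counting vanishings (a) and (b), which are immediate consequences of the non-negative grading, make every correction term disappear, leaving only the two endpoint contributions.
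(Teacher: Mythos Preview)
Your overall strategy matches the paper's: run a parametrised cobordism, rule out domain degenerations via the non-negative grading, and handle the Floer-cylinder breakings. Argument (a) is correct and equivalent to the paper's direct dimension count on the tangency-carrying piece (which lands in virtual dimension $\le -2$, hence $\le -1$ after adding the parameter, so is empty).

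There is a slip in how you dispatch the cylinder breakings. The codimension-$1$ contribution from an $l^1$-break in the parametrised problem is \emph{not} $\psi^k_{k-2}(x_1,\ldots,dx_i,\ldots,x_k)$; that expression belongs to the $L_\infty$ relation of Theorem~\ref{th:gr_l_inf}, which is the boundary of a \emph{different} $1$-dimensional moduli space (fixed auxiliary data, inputs of total degree~$1$). In the parametrised setting the relevant operator is the chain homotopy $K^k$ --- the count of the family over $s\in(0,1)$ --- applied to $(x_1,\ldots,dx_i,\ldots,x_k)$. Since $K^k$ has degree one higher than $\psi^k_{k-2}$, it is supported precisely on tuples of total degree~$1$, and your (b) does not make it vanish. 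The correct reason these terms disappear, and the one the paper gives, is simply that the inputs are Floer cocycles: $dx_i=0$. You do write ``specialising $x_i$ to cocycles'' at the very end, but that step is what actually kills the $l^1$ contributions; it is not a post-hoc formality.
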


\begin{proof}
Consider a moduli space (\ref{eq:m_gr_sh}) computing  (\ref{eq:gr_sh_0}) where the data $y$, $Y$ or $J$ vary in a 1-parametric family. The moduli space becomes 1-dimensional as well, and curves in this space can undergo the bubbling outlined above. 

The claim is that bubbling arising from domain degenerations does not happen. Suppose it happens; consider the part of the broken building which inherits the tangency condition. It belongs to a moduli space 
$$
\M_{1|k_0}(\psi_{m-1}\,\pt,x_1',\ldots,x_{k_0}')
$$
for some $k_0<k$ and $x_i'\in CF^*_{S^1,+}(M)$; according to Definition~\ref{def:nonneg} holds that $|x_i'|\ge 0$.
By (\ref{eq:dim_m_gr_sh}) the dimension of this moduli space is $\le -2$, hence such bubbling cannot happen. 

The only bubbling that can happen in codimension~1 is, therefore, the bubbling of  Floer cylinders from the input asymptotics, but they cancel when the inputs $x_i$ are Floer cocycles.
\end{proof}

\subsection{Descendants of the torus}
Recall that
$$
SH^0_{S^1,+}(T^*T^n)\cong \Z[H_1(T^n;\Z)]= \C[x_1^{\pm 1},\ldots,x_n^{\pm 1}]
$$
as vector spaces.
The following notation for the elements of $SH^0_{S^1,+}(T^*T^n)$ shall be adopted: for a vector $v_i\in H_1(T^n;\Z)\cong \Z^n$, the generator corresponding to it is denoted by 
$$
\xx^{v_i}=x_1^{v_i^1}\ldots x^{v_i^n}\in SH^0_{S^1,+}(T^*T^n).
$$
Here $v_i^j$ are the co-ordinates of $v_i\in\Z^n$.
The theorem below is the core computation of this paper; Section~\ref{sec:torus} is devoted to its proof.

\begin{theorem}
	\label{th:m_comput}
Suppose $v_1,\ldots,v_k\in H_1(T^n;\Z)$. Then
$$
\langle\xx^{v_1}|\ldots |\xx^{v_k}\rangle_{T^*T^n}=\begin{cases}
(k-2)!&\textit{if } v_1+\ldots+v_k=0,\\
0&\textit{otherwise}.
\end{cases}
$$
\end{theorem}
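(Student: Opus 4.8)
The plan is to reduce the general descendant $\langle\xx^{v_1}|\ldots|\xx^{v_k}\rangle_{T^*T^n}$ to low-arity ones using the $L_\infty$ augmentation relations from Theorem~\ref{th:gr_l_inf}, exploiting the known Chas-Sullivan (equivariant) bracket $l^2$ on $SH^*_{S^1,+}(T^*T^n)$ and the string-topology description of the higher $l^k$ on the torus. First I would pin down the base cases: when $k=2$ the claim is $\langle\xx^{v_1}|\xx^{v_2}\rangle_{T^*T^n}=\delta_{v_1+v_2,0}\cdot 0!=\delta_{v_1+v_2,0}$, which should follow directly from a geometric count of the rigid moduli space $\M_{1|2}(\psi_1\,\pt,\xx^{v_1},\xx^{v_2})$ — a once-punctured-twice sphere with a single tangency — or alternatively can be deduced from the $k=2$ instance of the augmentation relation once $l^2$ is known. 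The key algebraic input is the equivariant bracket: on $SH^0_{S^1,+}(T^*T^n)\cong\C[x_1^{\pm1},\ldots,x_n^{\pm1}]$ (as a vector space) the $L_\infty$ bracket should, by the computation referenced in Section~\ref{sec:torus} (the "computed Chas-Sullivan bracket on the loop space of the torus"), vanish or at least be tightly constrained on degree-0 generators — indeed for the torus the relevant string bracket $l^2(\xx^{v_1},\xx^{v_2})$ lands in a controlled place, and the higher $l^k$ for $k\ge 3$ on degree-0 generators are expected to vanish for degree reasons (the torus has "minimal" $CF^*_{S^1,+}$ up to action truncation).

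Next I would set up the induction on $k$. Fix $k\ge 3$ and apply Theorem~\ref{th:gr_l_inf} with inputs $\xx^{v_1},\ldots,\xx^{v_k}$, all of degree $0$: the relation
$$
\sum_{\stackrel{1\le r\le k,}{\sigma\in S_k}}(-1)^\maltese\,\tfrac{1}{r!(k-r)!}\,\psi^{k+1-r}_{k-2}\bigl(l^{r}(\xx^{v_{\sigma_1}},\ldots,\xx^{v_{\sigma_r}}),\xx^{v_{\sigma_{r+1}}},\ldots,\xx^{v_{\sigma_k}}\bigr)=0
$$
relates the top term $r=1$ — which after extracting the Floer differential $l^1=0$ on $SH$ is essentially $\psi^k_{k-2}=\langle\ \cdot\ |\ldots|\ \cdot\ \rangle$ composed with something trivial — to terms with $r\ge 2$, each of which feeds the output of $l^r$ (of degree $3-2r\le -1$ for $r\ge2$) into a lower-arity descendant $\psi^{k+1-r}_{k-2}$. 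One must check degree bookkeeping: $\psi^{k+1-r}_{m-1}$ vanishes unless the input degrees sum to $2(k+1-r)-2-2m$; with $m-1=k-2$ and $(k-r)$ degree-0 inputs $\xx^{v_i}$ plus one input $l^r(\ldots)$ of degree $3-2r$, the total is $3-2r$, while the required total is $2(k+1-r)-2-2(k-1)=2-2r$ — these differ by $1$, so the only surviving contributions come from the $r=1$ term and from $r$ such that $l^r$ itself produces a degree-0 class, i.e. $r=1$ (and constant-orbit corrections). I would therefore argue that on $T^*T^n$, with all inputs in degree $0$, the augmentation relation collapses to a recursion purely among the $\langle\cdots\rangle$-brackets of various arities, with coefficients that are rational multiples of binomials, together with contributions of $l^2$ acting into degree $1$ which then must pair with a $\psi^{k-1}_{k-2}$ that vanishes for degree reasons — leaving only combinatorial factors to track.

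The combinatorial heart is then: the $r=1$ term contributes $\psi^k_{k-2}$ applied after $l^1$, and re-indexing the permutation sum produces the factor $(k-2)!$ by comparison with the arity-$(k-1)$ case. Concretely I expect the recursion to take the shape $\langle\xx^{v_1}|\ldots|\xx^{v_k}\rangle=(k-2)\cdot\langle\xx^{v_1+v_2}|\xx^{v_3}|\ldots|\xx^{v_k}\rangle$ (up to grouping), obtained by isolating the unique $r=2$ contribution in which $l^2(\xx^{v_i},\xx^{v_j})$ hits the degree-0 class $\xx^{v_i+v_j}$ with coefficient $1$ — this is exactly where the explicit Chas-Sullivan bracket on the torus loop space enters, and it is the step on which the whole proof pivots. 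Iterating gives $\langle\xx^{v_1}|\ldots|\xx^{v_k}\rangle=(k-2)(k-3)\cdots 1\cdot\langle\xx^{v_1+\ldots+v_k}\rangle$-type expression, and then one needs the arity-$1$ (or arity-$2$) seed: the descendant vanishes unless $\sum v_i=0$ because the boundary homology class of the limiting holomorphic planes (from the Cieliebak-Mohnke stretching picture, Section~\ref{subsec:CM}) must sum to the class of a sphere, i.e. to $0$ in $H_1(L)$; when $\sum v_i=0$ the seed equals $1=0!$. Finally I would match this against the inductive reduction in Section~\ref{sec:torus} "to products of projective spaces", where $\langle x_1+\ldots+x_{n-1}+(x_1\cdots x_{n-1})^{-1}\ \text{monomials}\rangle$ recovers $\phi_d=d!\,\langle\psi_{d-2}\pt\rangle$ for $\C P^{n-1}$ as a consistency check on the constant $(k-2)!$.

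\textbf{Main obstacle.} The hard part will be controlling the bracket $l^2$ (and verifying the vanishing of $l^k$, $k\ge3$) on degree-$0$ classes of $CF^*_{S^1,+}(T^*T^n)$ precisely enough — with the correct sign and normalization — to extract the clean coefficient $(k-2)!$ rather than an uncontrolled rational. This is where the explicit loop-space (Chas-Sullivan) computation on $T^n$ and the careful orientation analysis à la \cite{EO17} are indispensable, and where a naïve dimension count is not enough because the relevant $\psi$-degree condition is off by one from the $l^r$-output degree, forcing one to understand exactly which boundary strata of the $1$-dimensional moduli spaces $\overline{\M}_{1|k}(\psi_{k-2}\pt,\ldots)$ actually contribute.
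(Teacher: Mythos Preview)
Your proposal has a genuine gap at exactly the step you flag as the hinge. You attempt to run the $L_\infty$ augmentation relation (Theorem~\ref{th:gr_l_inf}) with \emph{all} inputs $\xx^{v_1},\ldots,\xx^{v_k}$ in degree~$0$. But then every term vanishes: $l^1=0$ on the torus, and for $r\ge 2$ the operation $l^r$ has degree $3-2r\le -1$, so $l^r(\xx^{v_{i_1}},\ldots,\xx^{v_{i_r}})$ lands in $CF^{3-2r}_{S^1,+}(T^*T^n)$, which is zero since the complex is non-negatively graded. The relation is $0=0$ and yields no recursion. Your hoped-for formula $l^2(\xx^{v_i},\xx^{v_j})=\xx^{v_i+v_j}$ in degree~$0$ is false for exactly this reason. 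Relatedly, your recursion $\langle\xx^{v_1}|\ldots|\xx^{v_k}\rangle=(k-2)\,\langle\xx^{v_1+v_2}|\ldots|\xx^{v_k}\rangle$ cannot come from Theorem~\ref{th:gr_l_inf}: that relation is for \emph{fixed} $m$, whereas the brackets $\langle\,\cdot\,|\ldots|\,\cdot\,\rangle$ with $j$ inputs are $\psi^j_{j-2}$, so different arities live at different tangency orders $m$ and are not linked by a single augmentation relation.

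The paper's trick is to insert an auxiliary \emph{degree-$1$} class $i^{-1}(\Delta(\Omega\otimes\xx^u))\in SH^1_{S^1,+}(T^*T^n)$ alongside the degree-$0$ inputs. Now $l^2$ of this class with a degree-$0$ input lands in degree $1+0-1=0$, and the Chas--Sullivan computation gives $l^2(i^{-1}(\Delta(\Omega\otimes\xx^u)),\xx^{v})=\Omega(u,v)\,\xx^{u+v}$. Feeding this into Theorem~\ref{th:gr_l_inf} produces a \emph{linear relation among brackets of the same arity $k$} (Proposition~\ref{prop:m_main_rel}), not a recursion in $k$. The proof then repeatedly applies this relation, together with $GL(n,\Z)$-invariance and dimensional stabilisation (adding extra coordinates, Lemma~\ref{lem:stab}), to reduce an arbitrary balanced input to the special block-matrix form whose descendant is computed directly as $(k-2)!$ from products of projective spaces (Lemma~\ref{lem:cpm}). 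So the base case is not $k=2$, and there is no induction on $k$; the argument stays at fixed $k$ throughout and moves inside the lattice $(\Z^n)^k$ using the degree-$1$ classes.
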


The left hand side is the descendant invariant (\ref{eq:gr_sh_0}).
It is clear that the descendant  is zero unless $v_1+\ldots+v_k=0$ because the curves computing it can only connect orbits whose sum is null-homologous.
A quick reflection persuades that the rest of the statememt is not obvious. Clearly the descendants from the theorem are invariant under the \emph{diagonal} action of $SL(n,\Z)$ on $(\Z^n)^k$, but this action has infinitely many orbits on $k$-tuples of vectors summing to zero.

\subsection{A reminder on symplectic cohomology operations}
This is a good occasion to recall some basic operations on the (non-equivariant) symplectic cohomology $SH^*(M)$; they will be used in Section~\ref{sec:torus}. 
They are the product
$$
-\cdot-\co SH^*(M)\otimes SH^*(M)\to SH^*(M),
$$
the bracket
$$
l^2_{neq}\co SH^*(M)\otimes SH^*(M)\to SH^{*-1}(M),
$$
and the BV operator
$$
\Delta\co SH^*(M)\to SH^{*-1}(M).
$$
The bracket $l^2_{neq}$ is part of the $L_\infty$ structure defined above, and descends to cohomology by the $L_\infty$ relations.
In the orientation framework being used for symplectic cohomology, both the product and the bracket are graded commutative: 
$$x\cdot y=(-1)^{|x||y|}\,y\cdot x,\quad  l^2_{neq}(x,y)=(-1)^{|x||y|}\, l^2_{neq}(y,x).$$
The three operations are related by the identity
\begin{equation}
\label{eq:delta_bracket}
l^2_{neq}(x,y)=\Delta(x\cdot y)-\Delta(x)\cdot y-(-1)^{|x|}x\cdot \Delta(y),
\end{equation}
see e.g.~\cite{Sei14,Sei16}. It is reminded that the BV operator acts at chain level by $\Delta(\check{\gamma})=\hat{\gamma}$ and $\Delta(\hat{\gamma})=0$; $\Delta$ also vanishes on the constant orbits.
\subsection{String topology}
Let $L$ be a smooth orientable spin $n$-manifold, and 
$M=T^*L$. 
The homology $H_* (\L L)$ of its free loop space, and more generally the space of chains $C_*(\L L)$ on the free loop space, carry an abundance of  algebraic structures. Their study goes under the general name of \emph{string topology} and was pioneered by Chas and Sullivan \cite{CS99}. Examples of cohomological operations include the Chas-Sullivan product
$$
-\cdot-\co H_*(\L L)\otimes H_*(\L L)\to H_{* -n}(\L L),
$$
the Chas-Sullivan bracket
$$
[-,-]\co H_*(\L L)\otimes H_*(\L L)\to H_{* -n+1}(\L L),
$$
and the BV operator
$$
\Delta\co H_{*}(\L L)\to H_{*+1}(\L L).
$$
These three are related by the identity analogous to (\ref{eq:delta_bracket}).

\begin{remark}
This paper uses the convention where the sign behaviour of the Chas-Sullivan bracket matches the symplectic cohomology one.
In \cite{CS99}, the graded-commutativity property of the bracket, and the signs in a version of (\ref{eq:delta_bracket}), are different. One brings them to match the symplectic cohomology signs by redefining $[x,y]\mapsto (-1)^{n+|x|}[x,y]$. \end{remark}

There exist other cohomological operations, see e.g.~\cite{God07,GH09}. And  importantly, there are vastly more operations defined at the chain level, i.e.~operations with inputs and outputs in $C_*(\L L)$. For example, the differential and the chain-level Chas-Sullivan bracket are expected to extend to a sequence of operations which turn $C_*(\L L)$ into an $L_\infty$ algebra, called the Chas-Sullivan algebra. This is sketched in \cite{CS99,Sul07}, but in general the definition of chain-level string topology operation meets a technical obstacle: the natural geometric definitions usually work only when the inputs are sufficiently \emph{transverse} chains of loops, and the issue lies in extending them to all chains. There are recently proposed solutions to this  issue \cite{Iri17, Iri18}.

Now, it is a fundamental fact that the Floer complex $CF^*(M)$ is a model for  the chains on the free loop space of $L$.
Indeed, the Viterbo theorem says that $SH^*(M)\cong H_{n-*}(\L L)$, and one expects there to be a complete correspondence between the algebraic structures on $CF^*(M)$ defined using Floer theory,  and the string topology operations on $C_{n-*}(\L L)$. 

For example, it is known that the Viterbo isomorphism intertwines the symplectic cohomology product and the BV operator on symplectic cohomology with the corresponding string topology appearing above.
The statement about the product is due to Abbondandolo and Schwarz \cite{ASc10},
and the monograph of Abouzaid \cite{Ab15} also includes the BV~structure.

\begin{theorem}
The Viterbo isomorphism $SH^*(T^*L)\cong H_{n-*}(\L L)$ is an isomorphism of BV algebras.\qed
\end{theorem}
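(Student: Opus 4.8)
The plan is to realise the Viterbo isomorphism by a chain-level map and then check compatibility with the product and with the BV operator separately; the bracket will then follow for free from the identity~(\ref{eq:delta_bracket}), which holds on both the symplectic and the string-topology side with matching signs once the Chas--Sullivan bracket has been renormalised as in the convention fixed above. Concretely, I would start from the Abbondandolo--Schwarz comparison: for a Hamiltonian on $T^*L$ growing quadratically at infinity, a hybrid moduli space of half-cylinders interpolating between the Floer equation on one end and the negative gradient flow of the energy functional $E$ on (a finite-dimensional approximation of) $\L L$ on the other produces a quasi-isomorphism $\Phi\co CF^*(T^*L)\to C_{n-*}(\L L)$ inducing the Viterbo isomorphism on homology; the spin and orientability hypotheses on $L$ are what make this work with $\Z$-coefficients. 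For the product one degenerates a pair-of-pants hybrid surface to a figure-eight configuration, and this matches the pair-of-pants product with the Chas--Sullivan product; I would quote this step essentially verbatim from \cite{ASc10,Ab15}, so that $\Phi$ is already known to induce a ring isomorphism.

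The substance is the BV operator. On the Floer side, $\Delta$ is the chain-level operator recalled above ($\Delta(\check\gamma)=\hat\gamma$, $\Delta(\hat\gamma)=0$), which geometrically is the $S^1$-transfer operator for the reparametrisation action on Floer cylinders --- the defect measured by relaxing, versus fixing, an asymptotic marker. On the loop-space side, $\Delta$ is literally the cap with the fundamental class $[S^1]$ under the rotation action $S^1\times\L L\to\L L$. The key observation is that the interpolating half-cylinder used to build $\Phi$ carries the \emph{same} circle of domain rotations on its Floer end and on its loop end. I would therefore enlarge the Abbondandolo--Schwarz moduli space by a parameter in which the asymptotic marker on the Floer end is allowed to sweep the full circle, and examine the boundary of the resulting $1$-dimensional spaces: the ends are exactly the terms $\Phi\circ\Delta_{\mathrm{Floer}}$ and $\Delta_{\mathrm{string}}\circ\Phi$, together with the standard breaking terms, which assemble into a chain homotopy. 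This yields $\Phi\Delta_{\mathrm{Floer}}\simeq\Delta_{\mathrm{string}}\Phi$, hence $\Phi\Delta=\Delta\Phi$ on homology. Since $\Phi$ now intertwines both products and both BV operators, feeding this into~(\ref{eq:delta_bracket}) forces it to intertwine the two brackets as well, and $\Phi$ is an isomorphism of BV algebras.

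The main obstacle is the chain-level $S^1$-equivariance: the functional $E$ is $S^1$-invariant but only Morse--Bott, so the Morse-theoretic model of $C_*(\L L)$ must be built with an $S^1$-\emph{non}-invariant perturbation and the rotation action is realised only up to a prescribed chain homotopy; carrying that homotopy through the interpolating moduli spaces, and the orientation and sign bookkeeping needed to identify the two geometric incarnations of $\Delta$ consistently (including the $[x,y]\mapsto(-1)^{n+|x|}[x,y]$ renormalisation of the Chas--Sullivan bracket), is the technical heart of the argument --- this is precisely the analysis carried out in Abouzaid's monograph~\cite{Ab15}, which is why the statement is attributed there. A slicker but heavier alternative would be to pass to the $S^1$-equivariant theories on both sides, show that $\Phi$ is compatible with an equivariant Viterbo isomorphism, and then deduce the $\Delta$-compatibility from naturality of the two Gysin sequences relating the equivariant and non-equivariant theories; this replaces the delicate gluing analysis with a comparison of the associated long exact sequences.
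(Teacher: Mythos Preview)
The paper does not prove this theorem: the \qed at the end of the statement signals that it is quoted as a known result, with the preceding sentence attributing the product compatibility to Abbondandolo--Schwarz \cite{ASc10} and the BV compatibility to Abouzaid's monograph \cite{Ab15}. There is therefore no ``paper's own proof'' to compare against.

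Your sketch is a reasonable and accurate outline of the argument that actually appears in those references --- the Abbondandolo--Schwarz half-cylinder model for $\Phi$, the pair-of-pants degeneration for the product, and the marker-sweeping $S^1$-parametrised moduli space for the BV operator --- and you correctly identify that the delicate part (the $S^1$-equivariance of the Morse model for $\L L$ and the resulting sign/orientation analysis) is precisely what \cite{Ab15} supplies. In effect you have written more than the paper does: the paper treats the theorem as a black box and only uses its consequence, Corollary~\ref{cor:vit_lie}, that the brackets match.
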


The Lie brackets are recovered from the product and the BV operator as explained above, implying the next corollary which will be useful in Section~\ref{sec:pr}.

\begin{corollary}
\label{cor:vit_lie}
The Viterbo isomorphism intertwines the symplectic cohomology Lie bracket $l^2_{neq}$   with the Chas-Sullivan bracket.
\qed
\end{corollary}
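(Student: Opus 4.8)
The plan is to deduce the statement formally from the preceding theorem, using the BV identity that relates the bracket to the product and the BV operator on each side; there is no analytic input beyond what is already contained in that theorem.

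First I would set up the algebra. Write $\Phi\co SH^*(T^*L)\xrightarrow{\ \sim\ }H_{n-*}(\L L)$ for the Viterbo isomorphism. On the symplectic side, the Lie bracket $l^2_{neq}$ descends to cohomology by the $L_\infty$ relations and there satisfies (\ref{eq:delta_bracket}):
$$
l^2_{neq}(x,y)=\Delta(x\cdot y)-\Delta(x)\cdot y-(-1)^{|x|}\,x\cdot\Delta(y),
$$
expressing $l^2_{neq}$ as the failure of $\Delta$ to be a derivation of the symplectic cohomology product. On the string topology side, the Chas-Sullivan bracket $[-,-]$ on $H_{n-*}(\L L)$ satisfies the analogous identity with respect to the Chas-Sullivan product and the Chas-Sullivan BV operator $\Delta$ — this is exactly the BV-algebra compatibility recalled in the discussion of string topology, valid once the normalisation convention for $[-,-]$ mentioned in the text (the redefinition $[x,y]\mapsto(-1)^{n+|x|}[x,y]$) is in force, so that the two BV identities have literally the same signs.

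Next I would invoke the theorem stated immediately above, that $\Phi$ is an isomorphism of BV algebras: $\Phi(x\cdot y)=\Phi(x)\cdot\Phi(y)$ and $\Phi(\Delta x)=\Delta(\Phi x)$. Applying $\Phi$ to the right-hand side of (\ref{eq:delta_bracket}) and using these two intertwining properties term by term gives
$$
\Phi\bigl(l^2_{neq}(x,y)\bigr)=\Delta\bigl(\Phi(x)\cdot\Phi(y)\bigr)-\Delta(\Phi x)\cdot\Phi(y)-(-1)^{|x|}\,\Phi(x)\cdot\Delta(\Phi y)=[\Phi(x),\Phi(y)],
$$
which is the assertion that $\Phi$ intertwines $l^2_{neq}$ with the Chas-Sullivan bracket.

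The only genuine point to verify — and the one place a sign could go wrong — is that the grading shift by $n$ built into $\Phi$ and the adopted convention for the Chas-Sullivan bracket are mutually compatible, so that the string-topology BV identity transported through $\Phi$ is the symplectic one on the nose, rather than a sign-twisted variant. This is precisely what the convention $[x,y]\mapsto(-1)^{n+|x|}[x,y]$ is designed to arrange; once it is adopted the corollary is a one-line consequence of the BV-algebra isomorphism, so I would not expect any substantial obstacle here.
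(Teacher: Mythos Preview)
Your proposal is correct and is exactly the argument the paper intends: the corollary is stated with a \qed and is preceded by the sentence ``The Lie brackets are recovered from the product and the BV operator as explained above, implying the next corollary,'' so the paper's proof is precisely the formal deduction from the BV-algebra isomorphism via (\ref{eq:delta_bracket}) that you wrote out.
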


The references given for the above theorem  use the Hamiltonian framework for symplectic cohomology but the continuation isomorphism \cite{BO09} between the SFT and Hamiltonian versions of symplectic cohomology is easily shown to intertwine the bracket, cf.~\cite{EO17}, so Corollary~\ref{cor:vit_lie} holds in the SFT framework as well.

\section{From Gromov-Witten to Landau-Ginzburg}
\label{sec:pr}
This section gathers the results from Sections~\ref{sec:cm},~\ref{sec:bs} and~\ref{sec:grav} together to prove Theorem~\ref{th:lg_per}. It is reminded that the discussion in Sections~\ref{sec:bs} and~\ref{sec:grav} is  more general than actually needed: it concerns general Liouville domains while the proof of Theorem~\ref{th:lg_per} only requires working with the domain $T^*T^n$.

\subsection{Proof of Theorem~\ref{th:lg_per}}
One picks up from the outcome from the end of Section~\ref{sec:cm}. Recall that  formula  (\ref{eq:def_psi_t}) introduces closed-string  enumerative descendants  $\langle\psi_{d-2}\,\pt\rangle_d^\en$,
computed (up to a normalising factor)
by
the curves in the moduli space (\ref{eq:m_psi}). Section~\ref{sec:cm} also explains how to stretch 
the curves (\ref{eq:m_psi})
 around a monotone Lagrangian torus $L\subset X$ to produce holomorphic buildings $(u,w_1,\ldots, w_d)$.
Recall that they consist of $u\subset T^*L$ and  holomorphic planes $w_1,\ldots,w_d\subset X\setminus L$.

Recall that before stretching, the curves had $Nd$ auxiliary marked points $\{z_i\}$ mapping to a Donaldson divisor $\Sigma$.
Each $w_i$ has intersection number $N$ with $\Sigma$, because $[\Sigma]$ is dual to $2N$ times the Maslov class in $H_2(X,L)$. 
After stretching, each $w_i$ inherits exactly $N$ points
among the
$\{z_i\}$. 
Forgetting these marked points, each $w_i$ becomes a curve computing the Borman-Sheridan class $\BS$, see Section~\ref{sec:bs}. 

Denote $M=T^*L$, embedded into $X$ as the Weinstein neighbourhood of $L$ which was used for stretching. The completion of $X\setminus M$ is naturally isomorphic to the completion of $X\setminus L$. 

Next consider the curve $u$.
Its domain has $d+1$ marked points: the point $z_0$ which carries the tangency condition, and $d$ input punctures asymptotic to $\gamma_i$.
Note that 
$$
| \gamma_i|=0,
$$
by rephrasing (\ref{eq:deg_cm_stretch}) and (\ref{eq:degree_dic}).Also recall that $u$ inherits an annulus with a Hamiltonian perturbation.
This way each $u$ becomes an element of the moduli space computing gravitational descendants (\ref{eq:gr_sh_0}).

The curves $w_i$ and $u$ are generically simple. The SFT compactness \cite{CompSFT03} and gluing \cite{Pa15} theorems imply that  up to a reordering of the inputs,  there is a sign-preserving bijection between the closed curves computing $\langle\psi_{d-2}\,\pt\rangle_d^\en$, and the buildings $(u,w_1,\ldots,w_d)$. One gets:
$$
\langle\psi_{d-2}\,\pt\rangle_d^\en=\frac 1 {(Nd)!}\cdot \frac{1}{d!}\cdot
\binom {Nd} {N,\ldots, N}\cdot (N!)^d
\cdot
\langle  \BS|\ldots|\BS\rangle_{T^*L}= \frac{1}{d!}\cdot\langle  \BS|\ldots|\BS\rangle_{T^*L}.
$$
Here the factor $\frac 1 {(Nd)!}$  comes from  (\ref{eq:def_psi_t}), $\frac{1}{d!}$ comes from reordering the inputs, the multinomial term comes from the ways of distributing the marked points among the $w_i$, and $(N!)^d$ from the divisor axiom applied to the $w_i$-curves.
\qed

\subsection{Generalisation to Liouville subdomains}

The theorem below generalises Theorem~\ref{th:lg_per} which is its special case when $M$ is taken to be the Weinstein neighbourhood of a monotone Lagrangian torus, see Example~\ref{ex:l_m_mon} and Example~\ref{ex:non_pos_L}.

\begin{theorem}
	\label{th:gw_bs}
	Let $X$ be a closed monotone symplectic manifold, and $M\subset X$ a monotone Liouville subdomain (Definition~\ref{def:m_monot}) admitting a non-negatively graded Floer complex (Definition~\ref{def:nonneg}).
	
	Additionally, assume that there exists a Donaldson divisor $\Sigma\subset X$ such that
	 $M\subset X \setminus\Sigma$ is exact, and $c_1^{rel}\in H^2(X,M)$ is proportional to the Poincar\'e dual of $\Sigma$ (notice that the cohomology is relative).
	
	Then
	$$
	\tfrac{1}{d!}\langle\underbrace{ \BS|\ldots|\BS}_d
	\rangle_M=\langle\psi_{d-2}\,\pt\rangle_{X,d}^\en
	$$
	where $\BS$ is the Borman-Sheridan class from Section~\ref{sec:bs}, the left-hand side descendants are from (\ref{eq:gr_sh_0}), and the right-hand side Gromov-Witten invariant is from (\ref{eq:def_psi_t}).
\end{theorem}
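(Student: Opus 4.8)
The plan is to run the SFT neck-stretching argument from the proof of Theorem~\ref{th:lg_per} for a general monotone Liouville subdomain $M\subset X$ in place of the Weinstein neighbourhood $T^*L$ of a monotone Lagrangian torus: the identity we want is precisely the intermediate formula $\langle\psi_{d-2}\,\pt\rangle^\en_{X,d}=\tfrac1{d!}\langle\BS|\dots|\BS\rangle_M$ that appears in the proof of Theorem~\ref{th:lg_per}, now freed of the hypothesis $M=T^*L$. Concretely, fix the Donaldson divisor $\Sigma\subset X$ supplied by the hypothesis; by the assumed proportionality of $c_1^{rel}\in H^2(X,M)$ with the Poincar\'e dual of $\Sigma$ together with exactness of $M\subset X\setminus\Sigma$, the divisor $\Sigma$ is disjoint from $M$ and takes over the role of the anticanonical/Maslov divisor used in Section~\ref{sec:cm}. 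Choose the point constraint $y$ inside $M$, a $J$ that is integrable near $y$ and near $\Sigma$ and cylindrical near $\del M$, and for $r\to\infty$ stretch the curves of the moduli space (\ref{eq:m_psi}) around $\del M$ exactly as in Section~\ref{subsec:CM}, with $\hat W$ the completion of $W=X\setminus M$.

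First I would carry out the broken-building analysis of Section~\ref{subsec:CM} in this setting. The three inputs used there --- monotonicity of the Lagrangian $L$, its non-positive sectional curvature, and the divisor dual to $2N\mu_L$ --- are replaced, respectively, by monotonicity of the embedding $M\subset X$ (Definition~\ref{def:m_monot}), the non-negatively graded Floer complex hypothesis (Definition~\ref{def:nonneg}), and the Donaldson divisor $\Sigma$. Monotonicity of $M\subset X$ together with Lemma~\ref{lem:areas} forces every non-constant piece mapped to $\hat W$ to have $c_1^{rel}\ge 1$, so by additivity of $c_1^{rel}$ and the fact that the original sphere has Chern number $d$, the piece $u\subset\hat M$ carrying the tangency condition has at most $d$ negative punctures. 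The action argument of Section~\ref{subsec:CM} bounds a priori the actions of the Reeb orbits that can occur, so Definition~\ref{def:nonneg} applies after the relevant truncation and gives $\mu(\gamma_i)\le n-1$, equivalently $|\gamma_i|\ge 0$ by (\ref{eq:degree_dic}), for each asymptotic orbit $\gamma_i$ of $u$. Feeding these two facts into the Fredholm index computation for $u$, verbatim from Section~\ref{subsec:CM}, forces $u$ to have exactly $d$ negative punctures, all with $|\gamma_i|=0$, no symplectization levels, no sphere or disk bubbles, and each $\hat W$-piece to be a plane $w_i$ with $c_1^{rel}(w_i)=1$, i.e.\ a curve of the type counting the Borman--Sheridan class. (When $\del M$ carries contractible Reeb orbits, the pieces are allowed to be augmented by holomorphic planes in $M$, which have $c_1^{rel}=0$ and do not disturb the count, exactly as in the definitions of Section~\ref{sec:bs}.) I expect this step --- showing that the stretched curves break into \emph{only} the expected configuration $(u,w_1,\dots,w_d)$, ruling out all other degenerations --- to be the main obstacle; it is where regularity of the broken moduli spaces must be invoked, which is deferred to Section~\ref{sec:pr} and is unconditional in the case $M=T^*T^n$ that the applications need.

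It then remains to identify the pieces with the objects in the statement and to count, which is word-for-word the argument of the proof of Theorem~\ref{th:lg_per}. Each plane $w_i$ meets $\Sigma$ in exactly $N$ points by the proportionality $c_1^{rel}\propto[\Sigma]$, hence inherits exactly $N$ of the $Nd$ marked points of (\ref{eq:m_psi}); forgetting these, $w_i$ becomes an element of the moduli space $\M_{1|0}(\gamma_i)_W$ defining $\BS\in SH^0_{S^1,+}(M)$, and $|\gamma_i|=0$ confirms that $\gamma_i$ is a degree-zero generator. The curve $u$, with its $d$ input punctures asymptotic to the $\gamma_i$, its unpunctured marked point $z_0$ carrying the order-$(d-2)$ tangency to $Y$, and the inherited annulus with a Hamiltonian perturbation, is precisely an element of the gravitational-descendant moduli space (\ref{eq:m_gr_sh}) with $m=d-1$ and $k=d$ computing $\psi^d_{d-2}(\gamma_1,\dots,\gamma_d)=\langle\gamma_1|\dots|\gamma_d\rangle_M$ of (\ref{eq:gr_sh_0}); by Proposition~\ref{prop:gr_0_inv} and the non-negatively graded hypothesis this descendant is a cohomological operation, so it depends only on the classes $[\gamma_i]$, each of which is $\BS$. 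SFT compactness and gluing --- all the curves involved being generically simple --- then give a sign-preserving bijection between the curves of (\ref{eq:m_psi}) and the buildings $(u,w_1,\dots,w_d)$, and the numerical bookkeeping is identical to that in the proof of Theorem~\ref{th:lg_per}: the factor $\tfrac1{(Nd)!}$ from (\ref{eq:def_psi_t}), a factor $\tfrac1{d!}$ from reordering the inputs, the multinomial $\binom{Nd}{N,\dots,N}$ from distributing the marked points among the $w_i$, and $(N!)^d$ from the divisor axiom applied to each $w_i$ multiply to $\tfrac1{d!}$, yielding $\langle\psi_{d-2}\,\pt\rangle^\en_{X,d}=\tfrac1{d!}\langle\BS|\dots|\BS\rangle_M$ as claimed.
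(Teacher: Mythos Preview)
Your proposal is correct and follows essentially the same approach as the paper: stretch around $\partial M$ instead of around the torus, use the action bound together with Definition~\ref{def:nonneg} to get $\mu(\gamma_i)\le n-1$, use monotonicity of $M\subset X$ to force the broken building to have the shape $(u,w_1,\dots,w_d)$ (with augmentations when $\partial M$ has contractible Reeb orbits), and then repeat the bookkeeping from the proof of Theorem~\ref{th:lg_per}. The paper's own proof is a terse paragraph saying exactly this; you have simply unpacked the details.
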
	

\begin{proof} Choose $\Sigma$ from  (\ref{eq:def_psi_t}) to be the given one.
	The stretching argument from Section~\ref{sec:cm} may  be performed around $M$ rather than around a neighbourhood of $L$.
Once again the actions of the Reeb orbits $\gamma_i$ arising from the stretching are bounded by an apriori constant determined by the size of a Liouville collar of $M$ embeddable into $X$, and one arranges using Definition~\ref{def:nonneg} that the Conley-Zehnder indices of all such orbits satisfy $\mu(\gamma_i)\le n-1$.
 The argument in Section~\ref{sec:cm} only needs this assumption, and the assumption of monotonicity, to yield exactly the same structure of  broken buildings. (In the case when $\del M$ has contractible Reeb orbits, the curves $w_i$ are additionally augmented by holomorphic planes in $M$ in accordance with the definition of the Borman-Sheridan class.)	
 The rest of the proof is analogous to the proof of Theorem~\ref{th:lg_per} given in this section.
 \end{proof}
 
 \begin{remark}
  
  \begin{figure}[h]
  	\includegraphics[]{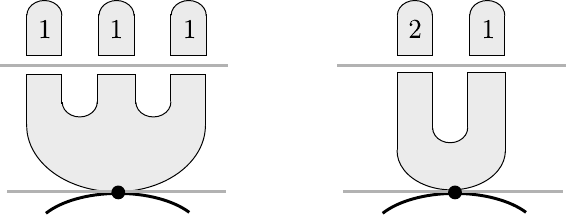}
  	\caption{Broken curves that may arise from stretching closed-string descendants when $CF^*(M)$ has negative degree generators. The numbers indicate $c_1^{rel}$.}
  	\label{fig:stretch_neg_degree}
  \end{figure}

As a final remark notice that the non-negatively graded assumption on $M$, or the non-positive sectional curvature assumption on $L$, was used crucially in the stretching argument to guarantee that Figure~\ref{fig:gw_stretch} is combinatorially the only possible configuration of broken curves. If $CF^*_{S^1,+}(M)$ has negative degree generators, different combinatorial types of broken curves become possible. For example, the stretching of $c_1=3$ spheres around $M\subset X$ may result in either of the broken curves shown in Figure~\ref{fig:stretch_neg_degree} (these two possibilities may not be exhaustive). This tallies with the fact that gravitational descendants of $M$ are no longer cohomological operations since they can undergo domain degenerations.
It would be interesting to obtain a version of Theorem~\ref{th:gw_bs} in this general case, involving chain-level  descendants.
\end{remark}

	\section{Gravitational descendants of the torus}
	\label{sec:torus}
This section presents a  proof of Theorem~\ref{th:m_comput}, the core computation of the paper. 	
	
\subsection{Basic relations}	
Consider the free loop space $\L T^n$ of the $n$-torus,
a vector $v\in \Z^n=H_1(T^n;\Z)$ and recall the notation for the corresponding symplectic cohomology class
$$
\xx^v=x_1^{v^1}\ldots x_n^{v^n}\in SH^0_{S^1,+}(T^*T^n)\cong \Z[H_1(T^n;\Z)]\cong\C[x_1^{\pm 1},\ldots,x_n^{\pm 1}].
$$
Here $v^i$ are the co-ordinates of $v$.
Now let $v_1,\ldots,v_k\in\Z^n$ be $k$ vectors; the aim is to compute descendant invariants
$$
\langle\xx^{v_1}|\ldots |\xx^{v_k}\rangle_{T^*T^n}\in\Z$$
from (\ref{eq:gr_sh_0}).
The subscript $T^*T^n$ will be dropped when not crucial.

The computation will rely on several basic properties for these invariants collected below, among which Proposition~\ref{prop:m_main_rel} is the most important one.

\begin{definition}
A collection of vectors $v_1,\ldots,v_k$ is said to be \emph{balanced} if $v_1+\ldots+v_k=0$.
\end{definition}

\begin{lemma}
	\label{lem:bal}
Descendants of the torus vanish on non-balanced inputs:
		\begin{equation}
		\label{eq:m_sum_0}
		\langle\xx^{v_1}|\ldots|\xx^{v_k}\rangle=0\quad\textit{unless}\quad
		\textstyle\sum_{i=1}^kv_i=0.
		\end{equation}
\end{lemma}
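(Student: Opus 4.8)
The plan is to treat (\ref{eq:m_sum_0}) as a purely homological constraint on the moduli spaces (\ref{eq:m_gr_sh}): I will argue that the moduli space $\M_{1|k}(\psi_{k-2}\,\pt,\xx^{v_1},\ldots,\xx^{v_k})$ computing $\langle\xx^{v_1}|\ldots|\xx^{v_k}\rangle$ via (\ref{eq:gr_sh_0}) is empty whenever $\sum_i v_i\neq 0$, so that no compactness, transversality, or orientation input is needed. First I would fix conventions: $M=T^*T^n$ deformation retracts onto its zero section, so $H_1(M;\Z)\cong H_1(T^n;\Z)=\Z^n$; the boundary $\del M=S^*T^n$ has no contractible Reeb orbits, so the definition of the moduli space involves no augmented configurations; and the degree-zero generator $\xx^{v}\in SH^0_{S^1,+}(T^*T^n)$ is carried by the multiplicity-one Reeb orbit $\gamma$ lying over the closed geodesic in the free homotopy class $v$, so that $[\gamma]=v\in H_1(M;\Z)$ under the above identification.

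Next I would take an arbitrary $(D,u)$ in that moduli space, with $D=\C P^1\setminus\{z_1,\ldots,z_k\}$; the extra marked point $z_0$ carrying the tangency condition does not affect $H_1(D)$, and the Hamiltonian perturbation is irrelevant since $u$ is in any case a continuous map $D\to M$. For each $i$ let $\ell_i\subset D$ be a small positively oriented loop encircling $z_i$. In $H_1(D;\Z)$ these satisfy the single relation
$$
[\ell_1]+\cdots+[\ell_k]=0,
$$
because the $\ell_i$ together bound the surface obtained from $D$ by deleting small open disks around the punctures, whose closure is a sphere. Since $u$ is asymptotic to $\gamma_i$ at $z_i$, pushing $\ell_i$ toward $z_i$ shows that $u|_{\ell_i}$ is freely homotopic in $M$ to $\gamma_i$, hence $u_*[\ell_i]=[\gamma_i]=v_i$. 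Applying $u_*$ to the displayed relation yields $\sum_{i=1}^k v_i=u_*([\ell_1]+\cdots+[\ell_k])=0$. Therefore, if $\sum_i v_i\neq 0$ the moduli space is empty and the count (\ref{eq:gr_sh_0}) vanishes, which is exactly (\ref{eq:m_sum_0}).

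I do not expect any genuine obstacle here: this is precisely the ``clear'' remark following Theorem~\ref{th:m_comput}, spelled out. The only point requiring attention is bookkeeping of conventions — that the generator $\xx^v$ corresponds to the orbit of homology class $v$, and that the $k$ meridian loops around the punctures of a sphere satisfy precisely one homological relation — and a brief check that neither the marked point $z_0$ nor the Hamiltonian perturbation interferes with the argument.
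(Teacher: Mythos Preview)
Your argument is correct and is precisely the paper's one-line proof (``any holomorphic curve contributing to a descendant invariant provides a nullhomology for the sum of its input Reeb orbits'') unpacked in detail: the compact surface $S\subset D$ obtained by excising small disks around the punctures realises $\sum_i[\ell_i]=0$ in $H_1(D)$, and pushing forward by $u$ gives $\sum_i v_i=0$. No further input is needed.
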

		
\begin{proof}
Any holomorphic curve contributing to a descendant invariant provides a nullhomology for the sum of its input Reeb orbits.
\end{proof}		
		
		\begin{lemma}
			\label{lem:SL}
			Descendants  of the torus are invariant under the action of $GL(n,\Z)$, that is, for any $g\in
			GL(n,\Z)$ one has
			$$
			\langle\xx^{v_1}|\ldots|\xx^{v_k}\rangle=\langle\xx^{g(v_1)}|\ldots|\xx^{g(v_k)}\rangle.
			$$
		\end{lemma}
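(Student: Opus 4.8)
The plan is to realise the $GL(n,\Z)$-action by Liouville automorphisms of $T^*T^n$ and to appeal to the naturality of the moduli spaces (\ref{eq:m_gr_sh}) defining the descendants, together with the choice-independence established in Proposition~\ref{prop:gr_0_inv}. First I would fix $g\in GL(n,\Z)$, let $\phi_g\co T^n\to T^n$ be the induced linear diffeomorphism of $T^n=\R^n/\Z^n$, and let $\Phi_g\co T^*T^n\to T^*T^n$ be its cotangent lift. In canonical coordinates $(q,p)$ with $\theta=\sum p_i\,dq_i$ one has $\Phi_g(q,p)=(gq,(g^{-1})^Tp)$, and a direct computation gives $\Phi_g^*\theta=\theta$, hence $\Phi_g^*\omega=\omega$; moreover $\Phi_g$ extends to the completion as a strict automorphism of the Liouville domain $T^*T^n$. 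On homology $\Phi_g$ acts on $H_1(T^n;\Z)\cong\Z^n$ by $g$, so under the identification $SH^0_{S^1,+}(T^*T^n)\cong\Z[H_1(T^n;\Z)]$ it sends $\xx^v\mapsto\xx^{g(v)}$; at the level of Reeb orbits, $\phi_g$ carries closed geodesics of a flat metric $\mu$ in the class $v$ to closed geodesics of the flat metric $(\phi_g)_*\mu$ in the class $g(v)$, with equal length.

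Next I would push forward along $\Phi_g$ all the auxiliary data entering (\ref{eq:m_gr_sh}): the contact form on $\del(T^*T^n)$ (and the Reeb-flow perturbation) realising the non-negatively graded Floer complex of Definition~\ref{def:nonneg}, the almost complex structure $J$, the marked point $y$, the germ $Y$, and the Hamiltonian $H$. The pushed-forward data is again admissible: $(\phi_g)_*\mu$ is still flat, so the contact form remains non-negatively graded up to the same action bound, and $\Phi_{g*}J$ is still integrable near $\Phi_g(y)$. Post-composition $u\mapsto\Phi_g\circ u$ then defines a bijection
$$
\M_{1|k}(\psi_{k-2}\,\pt,\xx^{v_1},\ldots,\xx^{v_k})\ \xrightarrow{\ \sim\ }\ \M_{1|k}(\psi_{k-2}\,\pt,\xx^{g(v_1)},\ldots,\xx^{g(v_k)}),
$$
where the left-hand moduli space is built from the original data and the right-hand one from the $\Phi_g$-pushed-forward data; the tangency and asymptotic constraints are carried to the corresponding ones, and the bijection is sign-preserving because $\Phi_g$, being a symplectomorphism, respects the orientations used to orient these zero-dimensional spaces. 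Counting both sides and then using Proposition~\ref{prop:gr_0_inv} to replace, on the right, the pushed-forward data by the original data, one obtains $\langle\xx^{v_1}|\ldots|\xx^{v_k}\rangle=\langle\xx^{g(v_1)}|\ldots|\xx^{g(v_k)}\rangle$, which is the assertion of Lemma~\ref{lem:SL}.

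I do not expect any serious obstacle: the content is the naturality of the construction (\ref{eq:m_gr_sh}) under an exact symplectomorphism, combined with the choice-independence of Proposition~\ref{prop:gr_0_inv}, and no new analysis is involved. The only points requiring genuine care are bookkeeping ones: checking that $\Phi_g$ respects the action-truncated non-negative grading (immediate, since $\phi_g$ permutes the flat metrics on $T^n$ and preserves the grading trivialisation of the canonical bundle up to homotopy), and, more delicately, the orientation and sign conventions — including the effect of $\phi_g$ on the fixed spin (or relative spin) structure on $T^n$, which one can absorb either by fixing the conventions functorially in the Liouville automorphism, or by observing that altering the spin structure on $T^n$ only twists the theory by a sign character compatible with the identification $SH^0_{S^1,+}(T^*T^n)\cong\Z[H_1(T^n;\Z)]$.
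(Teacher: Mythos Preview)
Your proposal is correct and follows exactly the paper's approach: the paper's proof is the single sentence ``this is true because the $GL(n,\Z)$-action on the torus lifts to symplectomorphisms of its cotangent bundle,'' and you have carefully unpacked precisely that statement. The additional bookkeeping you mention (pushing forward the auxiliary data, invoking Proposition~\ref{prop:gr_0_inv}, and noting that $\Phi_g$ preserves the grading trivialisation and the non-negatively graded condition) is implicit in the paper's one-liner and is handled correctly in your write-up.
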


\begin{proof}
	This is true because the $GL(n,\Z)$-action on the torus lifts to symplectomorphisms of its cotangent bundle.
\end{proof}

\begin{lemma}
\label{lem:stab} Descendants of the torus are invariant under stabilisation, that is, if $v_1,\ldots,v_k\in\Z^n\subset \Z^{n+1}$, then
$$
\langle\xx^{v_1}|\ldots|\xx^{v_k}\rangle_{T^*T^n}=\langle\xx^{v_1}|\ldots|\xx^{v_k}\rangle_{T^*T^{n+1}}.
$$
\end{lemma}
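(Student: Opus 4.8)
\subsection*{Proof proposal for Lemma~\ref{lem:stab}}

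The plan is to use the product decomposition $T^*T^{n+1}\cong T^*T^n\times T^*S^1$ (the cotangent bundle of a product is the product of cotangent bundles) together with the fact that the inputs $\xx^{v_i}$, $v_i\in\Z^n\subset\Z^{n+1}$, have vanishing $S^1$-component. By Proposition~\ref{prop:gr_0_inv} the descendant $\langle\xx^{v_1}|\ldots|\xx^{v_k}\rangle_{T^*T^{n+1}}$ may be computed with any admissible data, so I would choose \emph{product data} on $T^*T^{n+1}$: a product Liouville structure with a product (flat Morse-Bott) metric, a split almost complex structure $J=J_0\times j$ with $J_0$ an admissible almost complex structure on $T^*T^n$ and $j$ the standard one on $T^*S^1$, the tangency point $y=(y_0,s_0)$ with $s_0$ on the zero section of $T^*S^1$, the germ $Y=Y_0\times T^*S^1$ of a hypersurface, and the Hamiltonian perturbation pulled back from the $T^*T^n$ factor, so that $X_H$ is everywhere tangent to the $T^*T^n$-direction (its transversality to $Y_0$ then gives transversality to $Y$). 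For such data the perturbed equation $(du-X_H\otimes\beta)^{0,1}=0$ in (\ref{eq:m_gr_sh}) decouples: a solution $u=(u_0,\rho)\colon D\to T^*T^n\times T^*S^1$ consists of a solution $u_0$ of the descendant problem for $T^*T^n$ and an honestly $j$-holomorphic map $\rho\colon D\to T^*S^1$.

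The key point is that $\rho$ is forced to be constant. Since the input orbits $\xx^{(v_i,0)}$ have zero winding in the $S^1$-direction, their images under the projection to the $T^*S^1$ factor are single points of the zero section; hence $\rho$ converges at each puncture $z_i$ and, by removal of singularities, extends to a finite-energy $j$-holomorphic sphere $\bar\rho\colon\C P^1\to\widehat{T^*S^1}$. As $\widehat{T^*S^1}$ is exact, Stokes' theorem forces $\bar\rho$ to have zero energy, so $\bar\rho$ is constant, and the constraint $\rho(z_0)=s_0$ pins it to the value $s_0$. Therefore $u=(u_0,s_0)$, and $u_0$ lies in $\M_{1|k}(\psi_{k-2}\,\pt,\xx^{v_1},\ldots,\xx^{v_k})$ for $T^*T^n$ (the local intersection multiplicity of $u=(u_0,s_0)$ with $Y=Y_0\times T^*S^1$ at $z_0$ equals that of $u_0$ with $Y_0$); conversely every such $u_0$ yields the solution $(u_0,s_0)$ over $T^*T^{n+1}$. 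This gives a bijection between the two rigid $0$-dimensional moduli spaces, so there is no compactification to analyse. Finally, the linearization at $(u_0,s_0)$ splits as $D_{u_0}\oplus D_\rho$, where $D_\rho$ is the $\bar\partial$-operator on the trivial line bundle over $\C P^1$ cut down by the point condition at $z_0$ — a complex-linear isomorphism, canonically oriented and of index $0$. Hence the product problem is regular for generic $J_0$ exactly when the $T^*T^n$ problem is, the bijection is sign-preserving, and the signed counts coincide, which is the asserted identity.

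The main obstacle is entirely technical and concerns realizing the product picture rigorously: $T^*T^{n+1}$ with its standard completion is not literally $\widehat{T^*T^n}\times\widehat{T^*S^1}$, so one must realize the split almost complex structure as a genuinely cylindrical one via the standard corner-rounding used in proofs of the K\"unneth formula for symplectic cohomology, and one must identify the generators $\xx^{(v,0)}$ of $SH^0_{S^1,+}(T^*T^{n+1})$ — which live in a perturbed Morse-Bott model — with the corresponding generators of $SH^0_{S^1,+}(T^*T^n)$ compatibly with the product. Neither step affects the curves under consideration, which stay in the product region, but they are where the care is required; the rest is routine.
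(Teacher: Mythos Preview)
Your approach is correct in spirit and morally equivalent to the paper's, but the route is genuinely different. You propose a split almost complex structure and argue by decoupling: the $T^*S^1$-component $\rho$ is a finite-energy holomorphic sphere in an exact target, hence constant. The paper instead works directly on $T^*T^{n+1}$ with a \emph{cylindrical} $J$ chosen so that the hypersurface $\Sigma=T^*(T^n\times\{p\})$ is $J$-complex and $X_H$ is tangent to it; it then argues by contradiction that any curve not contained in $\Sigma$ would have strictly positive intersection number $D\cdot\Sigma$ (positivity of intersections), while the projection $\pi\co T^*T^{n+1}\to T^*S^1$ contracts the asymptotics to a point, forcing $\deg(\pi|_D)=0$ and hence $D\cdot\Sigma=0$. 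This is the workaround from \cite[Section~4]{DGI16} that the paper explicitly invokes to avoid the non-cylindricity of product $J$.

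The trade-off is exactly the one you flag at the end: your decoupling argument is cleaner once the product setup is in place, but making it rigorous requires either corner-rounding or a K\"unneth-type identification of the product of completions with the completion of the product, and your assertion that the curves ``stay in the product region'' is not literally true (they are asymptotic to Reeb orbits and so enter the cylindrical end where the modification happens). The paper's intersection-number argument sidesteps this entirely by never leaving the genuine cylindrical model of $T^*T^{n+1}$; the price it pays is the slightly delicate point that positivity of intersections with $\Sigma$ survives the Hamiltonian perturbation because $X_H$ is tangent to $\Sigma$. Either path closes, but the paper's needs fewer external inputs.
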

	
\begin{proof}
The usual technical issue with this type of statements is that a  product almost complex structure on $T^*T^n\times T^*S^1$ is not cylindrical, calling for a workaround. The one used here is inspired by \cite[Section~4]{DGI16}.

Let the tangency condition point $y\in T^*T^{n+1}$ defining gravitational descendants belong to the zero-section $T^{n+1}$.
Consider the splitting $T^{n+1}=T^n\times S^1$ and a perturbation of the flat metric on it with two totally geodesic tori of the form $T^n\times\{p\}$, $T^n\times \{P\}$, the first one containing geodesics of symplectic cohomology degree~0. All inputs $\xx^{v_i}$ correspond to closed geodesics within $T^n\times \{p\}$.
Assume that $y\in T^n\times \{p\}$. There is a compatible cylindrical almost complex structure $J$ on $T^*T^{n+1}$  such that $\Sigma=T^*(T^n\times\{p\})$ is a complex hypersurface. Pick a Hamiltonian vector field $X_H$ tangent to $\Sigma$.

Let $D\subset T^*T^{n+1}$ be the image of a curve $u$ contributing to the count of the descendant from the statement; it is enough to show that $D\subset \Sigma$ so assume that this is not the case.
Then for a generic such $J$, $D\cap\Sigma$ is discrete and non-empty because $y\in D\cap \Sigma$. So the intersection number $D\cap\Sigma$ is positive. This intersection number, by definition, only counts interior intersections and  ignores the fact that  $D$ is asymptotic to $\Sigma$ at infinity.

The projection $\pi\co T^*T^{n+1}\to T^*S^1$ to the last factor contracts the asymptotic orbits of $D$ to the point $p\subset T^*S^1$, and with this contraction the map $\pi|_D$ is null-homotopic because $T^*S^1$ is aspherical. So $\pi|_D$  has zero degree. The claim is that, on the other hand,  $D\cap \Sigma$ equals that degree, hence the contradiction. The claim holds because, since $\pi$ contracts the asymptotics of $D$ to $p$, the restriction of $\pi$ to a neighbourhood of each asymptotic of $D$ has a well-defined and vanishing degree, and  the fact that the asymptotics of $D$ are contained in $\Sigma$  may be ignored for the purpose of computing the total degree as the intersection  number $D\cap\Sigma$.
\end{proof}

\begin{lemma}
	\label{lem:cpm}	
Let $k=\sum_{i=1}^s (k_i+1)$, $n=\sum_{i=1}^s k_i$,	and $$\{x_{i,j}\}_{i=1,\ldots, s, \ j=1,\ldots, k_i}$$ be the variables corresponding to a basis of $H_1(T^n)$, grouped into $s$ groups. Then
$$
\langle x_{1,1}|\ldots|x_{1,k_1}|x_{1,1}^{-1}\ldots x_{1,k_1}^{-1}|x_{2,1}|\ldots |x_{2,k_2}|x_{2,1}^{-1}\ldots x_{2,k_2}^{-1}| \ldots \rangle_{T^*T^n}=(k-2)!
$$
Above, for each $i=1,\ldots, s$ there is a group of $(k_i+1)$ inputs as follows: $$x_{i,1},\ \ldots,\  x_{i,k_i},\ \prod_{j=1}^{k_i}x_{i,j}^{-1}.$$
In other words, Theorem~\ref{th:m_comput} holds for such inputs.
\end{lemma}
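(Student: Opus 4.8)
The plan is to deduce the Lemma from the (classically known) quantum periods of products of projective spaces, using the stretching package already assembled. Apply Theorem~\ref{th:gw_bs} to $X=\C P^{k_1}\times\cdots\times\C P^{k_s}$, a toric Fano of complex dimension $n=\sum_i k_i$, with $L$ the product of the Clifford tori; then $M=T^*L=T^*T^n$ has non-negatively graded Floer complex, so the hypotheses are met. By Lemma~\ref{lem:bs_w} the Borman--Sheridan class is $\BS=W_L=\sum_{i=1}^s W_i$, where $W_i$ is the standard toric potential of $\C P^{k_i}$ written in the block of variables $x_{i,1},\dots,x_{i,k_i}$, so $W_i=\sum_j x_{i,j}+\prod_j x_{i,j}^{-1}$ has $k_i+1$ monomials, all with coefficient $+1$. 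Expanding the left side of Theorem~\ref{th:gw_bs} multilinearly and using Lemma~\ref{lem:bal} together with Lemma~\ref{lem:desc}, the theorem becomes
\[
\sum_{(\alpha_1,\dots,\alpha_k)\ \text{balanced}}\langle \xx^{v_{\alpha_1}}|\cdots|\xx^{v_{\alpha_k}}\rangle_{T^*T^n}=k!\,(k-2)!\,\langle\psi_{k-2}\,\pt\rangle_{X,k},
\]
the sum running over ordered $k$-tuples of monomials of $W_L=\sum_\alpha \xx^{v_\alpha}$ whose product is $1$. The right side is known: by Givental's computation of the $J$-function of a toric Fano — or, here, by directly evaluating the constant term of $W_L^k$ and invoking $G_{X_1\times X_2}=G_{X_1}G_{X_2}$ — one has $k!\,\langle\psi_{k-2}\,\pt\rangle_{X,k}=\phi_k(W_L)$, which is exactly the number of balanced ordered $k$-tuples above. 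Hence the average of the descendant over balanced $k$-tuples equals $(k-2)!$.

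For $s=1$ this finishes the proof immediately. With $k=k_1+1$, the only balanced unordered $k$-tuple of monomials of $W_1$ uses each monomial exactly once, so all $k!$ balanced orderings are permutations of the diagonal tuple of the statement; by the permutation-invariance of the descendant they are all equal, hence equal to $(k-2)!$. (The same computation with $d=r(k_1+1)$ shows that the descendant on $r$ copies of each monomial of $W_1$ equals $(rk_1+r-2)!$; this more general single-block statement is needed below.)

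For $s\ge 2$ I would argue by induction on the number of blocks. In the displayed identity with $k=\sum_i(k_i+1)$, the diagonal tuple of the Lemma is the unique balanced tuple using all monomials of $W_L$, and it accounts for $k!$ of the summands; every other balanced tuple omits at least one $W_i$, since $\sum_i(k_i+1)t_i=\sum_i(k_i+1)$ with $t_i\ge 0$ and $k_i\ge 1$ forces $t_i\equiv 1$ unless some $t_i=0$. Such an omitting tuple is, after Lemma~\ref{lem:SL} and Lemma~\ref{lem:stab}, a Clifford-type descendant with fewer blocks, equal to $(k-2)!$ by the inductive hypothesis. Substituting into the linear identity and using $\phi_k(W_L)=k!\,\langle\psi_{k-2}\,\pt\rangle_{X,k}$ gives $\langle\text{diagonal tuple}\rangle_{T^*T^n}=(k-2)!$, closing the induction.

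The main obstacle is that the omitting tuples carry arbitrary multiplicities, so the inductive hypothesis must be the stronger statement that every \emph{Clifford-type} descendant — on a tuple which, after a $GL(n,\Z)$ change of basis, is a disjoint union over blocks of $r_i$ copies of the diagonal tuple of $\C P^{k_i}$ — equals $(\text{length}-2)!$. Proving this requires either a second, finer induction (the block count alone no longer strictly decreases, so one must also control a complexity such as the total multiplicity or the number of distinct generators), or, more transparently, a splitting formula expressing the descendant of a tuple supported in two complementary sub-tori as a binomial multiple of the product of the descendants of its halves. Such a formula should follow from a neck-stretching argument of the type used in a special case in the proof of Lemma~\ref{lem:stab}, arranged so that a product almost complex structure governs the relevant limit; I expect this splitting, rather than the bookkeeping of periods, to be the delicate point.
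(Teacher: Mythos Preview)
Your setup is exactly the paper's: take $X=\C P^{k_1}\times\cdots\times\C P^{k_s}$, $L$ the product Clifford torus, and apply Theorem~\ref{th:gw_bs} together with Lemma~\ref{lem:bs_w}. The difference is that you apply Theorem~\ref{th:gw_bs} in its stated form, summing over all Chern-number-$k$ curves, whereas the paper observes that the stretching argument behind Theorem~\ref{th:gw_bs} \emph{obviously refines to a fixed curve class} $\beta\in H_2(X)$. Taking $\beta$ to be the multidegree $(1,\ldots,1)$ class, the known toric descendant is $\langle\psi_{k-2}\,\pt\rangle_{X,\beta}=1$, and on the broken side the only $k$-tuple of Maslov-2 disk classes summing to $\beta$ is the one using each monomial of $W_L$ exactly once. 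So the diagonal tuple is isolated in a single step, and Lemma~\ref{lem:desc} gives $(k-2)!$ immediately, for every $s$.

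Missing this class refinement is what forces you into the induction on $s$, and your own diagnosis of the resulting gap is accurate. The omitting tuples that appear for $s\ge 2$ are not, in general, diagonal tuples with fewer blocks: they are tuples with arbitrary multiplicities $(r_1,\ldots,r_{s'})$ on the surviving blocks (e.g.\ for $(\C P^1)^3$ at $k=6$ one meets the $(2,1)$ pattern on two blocks). So the inductive hypothesis must be strengthened exactly as you say, and then neither the number of blocks nor the length strictly decreases. The splitting formula you propose would close this, but you have not proved it, and it is not needed: the class-refined version of Theorem~\ref{th:gw_bs} bypasses the whole issue.
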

\begin{proof}
Let $X=\prod_{j=1}^s \C P^{k_i}$, and $\beta\in H_2(X)$ be the class of multidegree $(1,\ldots,1)$. This means, if $h_i\in H_2(\C P^{k_i})\subset H_2(X)$ are the line classes, that $\beta=\sum h_i$.
As a general observation, Gromov-Witten invariants can be updraded to take into account the homology classes of holomorphic curves. In particular, consider the GW invariant
$$
\langle \psi_{k-2}\, \pt\rangle_{X,\beta}
$$
defined as usual but only allowing curves in class $\beta$. Note that $c_1(\beta)=k$. Descendant GW invariants for toric manifolds are known, see e.g.~\cite[Corollary~C.2]{CCGK16}, and in particular:
$$
\langle \psi_{k-2}\, \pt\rangle_{X,\beta}=1.
$$

Next consider the monotone Lagrangian torus $L\subset  X$ which is the product  of Clifford tori in each factor. Its LG~potential is:
$$
W_L=\sum_{i=1}^s \left( x_{i,1}+\ldots+ x_{i,k_i}+\prod_{j=1}^{k_i}x_{i,j}^{-1}\right).
$$

One turns to Theorem~\ref{th:gw_bs},
and Lemma~\ref{lem:bs_w} which relates the Borman-Sheridan  class with the disk potential.
In the present case, the domain $M$ in Theorem~\ref{th:gw_bs} is a neighbourhood of $L$; observe that the inclusion $H_2(L)\to H_2(X)$ vanishes.
In this case Theorem~\ref{th:gw_bs}
obviously upgrades to an identity between descendant GW~invariants in a given curve class $\beta\in H_2(X)$, and those disk tuples whose classes $\beta_i\in H_2(X,L)$ satisfy $\beta=\sum \beta_i$, assuming that $\sum \del \beta_i=0\in H_1(L)$.

Recall that $W_L$ has one monomial summand for each holomorphic disk $\beta_i$ contributing to the potential of $L$; there are $k$ disks in total.
The only $k$-tuple of holomorphic disks $\{\beta_i\}$ satisfying $\sum \beta_i=\beta$ is the tuple which uses each holomorphic disk once, without repetitions.

A version of Theorem~\ref{th:gw_bs} for the class $\beta$ and Lemma~\ref{lem:bs_w} yield the following:
$$
\langle \psi_{k-2}\, \pt\rangle_{X,\beta}^\en =\frac 1 {k!}\cdot k!\cdot \langle x_{1,1}|\ldots|x_{1,k_1}|x_{1,1}^{-1}\ldots x_{1,k_1}^{-1}|x_{2,1}|\ldots \rangle_{T^*T^n},
$$
where the right hand side is the desired invariant. The $k!$ factor is the number of the reorderings of all $k$ monomials of $W$. 
Lemma~\ref{lem:desc} finishes the proof.
\end{proof}		

\begin{remark}
If one considers the purely holomorphic version of the $L_\infty$ augmentation from Remark~\ref{rmk:tan_aug},
the numbers in a version of Lemma~\ref{lem:cpm} become different. They are somewhat harder to compute and will be treated in a forthcoming paper. 	
\end{remark}
		
Denote $$\Lambda^2(\Z^n)=
\Lambda^2(H^1(T^n;\Z))= H^2(T^n;\Z)\cong H_{2n-2}(T^n;\Z);$$
this space can be seen as the space of skew-symmetric bilinear maps $\Z^n\otimes\Z^n\to\Z$.
Below is the last and most important property which will allow to compute descendants of the torus.

\begin{proposition}
	\label{prop:m_main_rel}
Consider a collection of vectors $v_1,\ldots,v_k\in\Z^n$ and any
$$
\Omega\in \Lambda^2(\Z^n),\quad u\in\Z^n.
$$
 The following relation holds, where $\Omega(u,v_i)\in\Z$ are the pairings.
\begin{equation}
\label{eq:m_main_rel}
\sum_{i=1}^k\Omega(u,v_i)\cdot  \langle \xx^{v_1}|\ldots| \xx^{v_{i-1}}|\xx^{v_i+u}|\xx^{v_{i+1}}|\ldots|\xx^{v_k}\rangle=0.
\end{equation} 
\end{proposition}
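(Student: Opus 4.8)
The plan is to obtain (\ref{eq:m_main_rel}) as a special case of the $L_\infty$ morphism relations of Theorem~\ref{th:gr_l_inf}, for a well-chosen list of inputs. Fix an action bound $A$ exceeding the actions of all orbits relevant to the $v_i$ and $u$, and (as in the discussion after (\ref{eq:SH_eq_T_Laur})) work with a perturbation for which $CF^*_{S^1,+}(T^*T^n)$ is minimal and non-negatively graded up to action $A$; then $l^1$ vanishes on all generators in sight and $CF^{<0}_{S^1,+}(T^*T^n)=0$. Besides the degree-$0$ generators $\xx^{v}$, I will use a single \emph{degree-one} generator $y=y_{u,\Omega}$: the one supported on the loop component $u\in H_1(T^n;\Z)$ and determined there by the covector $\Omega(u,-)\in H^1(T^n;\Z)\cong(\Z^n)^*$. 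This is legitimate precisely because $\Omega(u,u)=0$, which is exactly the condition cutting out $SH^1_{S^1,+}(T^*T^n)$ inside $\bigoplus_{w}(\Z^n)^*\cdot\xx^{w}$; this is the point where skew-symmetry of $\Omega$ is used. Now write down the relation of Theorem~\ref{th:gr_l_inf} for the augmentation $\psi_{k-2}$ with the $k+1$ inputs $\xx^{v_1},\dots,\xx^{v_k},y$.

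The second step is a degree count eliminating all but one family of terms. A term $\psi^{k+2-r}_{k-2}(l^r(\cdots),\cdots)$ is nonzero only if the total input degree of $\psi^{k+2-r}_{k-2}$ equals $4-2r$, while the output of $l^r$ on generators among our inputs has degree $3-2r$ plus the degrees of its arguments. Since $CF^*_{S^1,+}(T^*T^n)$ vanishes in negative degrees and $l^1$ kills our generators, one reads off: the $r=1$ terms vanish; for $r\ge 3$ the output of $l^r$ lands in a negative degree and vanishes; for $r=2$, the bracket $l^2(\xx^{v_i},\xx^{v_j})$ has degree $-1$ and vanishes, whereas $l^2(y,\xx^{v_i})$ has degree $0$ and survives. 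Hence the relation collapses to
$$
\sum_{i=1}^k (-1)^{\maltese_i}\,\psi^k_{k-2}\!\big(l^2(y,\xx^{v_i}),\,\xx^{v_1},\dots,\widehat{\xx^{v_i}},\dots,\xx^{v_k}\big)=0,
$$
where the prefactor $\tfrac1{2!\,(k-1)!}$ has been absorbed against the $2\,(k-1)!$ permutations $\sigma$ producing each term (using graded symmetry of $\psi^k_{k-2}$ and of $l^2$, all Koszul signs being trivial as the arguments are of even degree). One must also check that the signs $(-1)^{\maltese_i}$ agree for all $i$; this follows from the $L_\infty$ morphism sign conventions, the terms differing only by permutations of even-degree elements.

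The third step computes $l^2(y,\xx^{v_i})\in CF^0_{S^1,+}(T^*T^n)$. By (\ref{eq:l_neq_to_eq}) this equals the non-equivariant bracket $l^2_{neq}$ of the corresponding hat-generators of $CF^*(T^*T^n)$, which by Corollary~\ref{cor:vit_lie} (and the Viterbo isomorphism of BV algebras) is the Chas--Sullivan bracket on $H_*(\L T^n)$. Using the string topology of the torus, where under Viterbo $SH^*(T^*T^n)\cong H^*(T^n)\otimes\Z[H_1(T^n)]$ as a ring and the BV operator acts on the summand $\xx^{w}$ by contraction with $w$, the identity (\ref{eq:delta_bracket}) gives
$$
l^2_{neq}\big(\eta\otimes\xx^u,\,1\otimes\xx^{v_i}\big)=\langle\eta,v_i\rangle\,\xx^{u+v_i}\qquad(\eta\in H^1(T^n)).
$$
Taking $\eta=\Omega(u,-)$ this reads $l^2(y,\xx^{v_i})=\Omega(u,v_i)\,\xx^{u+v_i}$. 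Substituting into the reduced relation, and using $\psi^k_{k-2}(\xx^{u+v_i},\dots)=\langle\xx^{v_1}|\dots|\xx^{u+v_i}|\dots|\xx^{v_k}\rangle$ (reordering of the even-degree inputs is sign-free), produces (\ref{eq:m_main_rel}) up to an overall nonzero sign, which is immaterial.

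The main obstacle is the third step: correctly identifying the degree-one equivariant generator $y_{u,\Omega}$ and its bracket with $\xx^{v_i}$, i.e.\ matching the hat-subcomplex of $CF^*(T^*T^n)$ with the relevant part of the string topology of the torus and pinning down the precise constant and sign. The degree bookkeeping of the second step is routine but must be carried out carefully so that no stray term is overlooked; in particular it genuinely uses non-negativity of the grading, not merely that the $\xx^{v_i}$ are cocycles. I do not expect the sign analysis to be a real difficulty, as every object involved is of even degree, but it should be written out.
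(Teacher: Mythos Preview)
Your proposal is correct and follows essentially the same route as the paper: apply the $L_\infty$ morphism relation of Theorem~\ref{th:gr_l_inf} to the inputs $\xx^{v_1},\ldots,\xx^{v_k}$ together with one degree-one equivariant class, use non-negativity of the grading to kill everything except the $l^2$ terms pairing the degree-one class with a single $\xx^{v_i}$, and compute that bracket via string topology of the torus to obtain $\Omega(u,v_i)\,\xx^{u+v_i}$.

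The only difference is in how the degree-one class is introduced. The paper constructs it as $i^{-1}(\Delta(\Omega\otimes\xx^u))$, arguing that $\Delta(\Omega\otimes\xx^u)=\iota_u\Omega\otimes\xx^u$ is a hat-type element in $SH^1_+(T^*T^n)$ and therefore lifts along the Gysin map $i\co SH^1_{S^1,+}\to SH^1_+$; your element $y_{u,\Omega}$ is the same thing described intrinsically via the covector $\Omega(u,-)$. Your condition $\Omega(u,u)=0$ singling out $SH^1_{S^1,+}$ on the component $u$ is equivalent to the paper's observation that the class lies in the image of $\Delta$. The bracket computation is also the same: the paper packages it as Lemma~\ref{lem:bracket_t2}, while you compute directly from (\ref{eq:delta_bracket}), (\ref{eq:torus_prod}), (\ref{eq:torus_d}), and the $\eta(u)$ terms cancel regardless of whether $\eta(u)=0$.
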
		
	
\begin{remark}
The inputs in (\ref{eq:m_main_rel}) are simultaneously either all balanced or not. In the latter case the proposition is obvious. 
The balanced case in Proposition~\ref{prop:m_main_rel} is the one when $\sum_{i=1}^k v_i=-u$. In this case it is useful to note that Theorem~\ref{th:m_comput} implies Proposition~\ref{prop:m_main_rel}; indeed, assuming Theorem~\ref{th:m_comput}, (\ref{eq:m_main_rel})  rewrites as 
$$
\sum_{i=1}^k\Omega(u,v_i)\cdot  (k-2)!=(k-2)!\cdot \Omega(u,\textstyle \sum_{i=1}^k v_i)=(k-2)!\cdot \Omega(u,-u)=0.
$$
The goal will be  to prove the converse, also using the previous properties.
\end{remark}

\begin{example}
	\label{ex:comp}
Let 
$$v_1=(-2,-1),\quad v_2=(1,0),\quad v_3=(1,0).$$
Denoting the variables by $x,y$ instead of $x_1,x_2$ for convenience, the corresponding monomials $\xx^{v_i}$ are
$$
x^{-2}y^{-1},\ x,\ x.
$$
Take $u=(0,1)$ so that $\xx^u=y$, and take $\Omega=dy\wedge dx$.
The relation from Proposition~\ref{prop:m_main_rel} gives
$$
-2\, \langle x^{-2}|x|x\rangle+\langle x^{-2}y^{-1}|xy|x\rangle+\langle x^{-2}y^{-1}|x|xy\rangle=0
$$
The last two terms are equal to~1 by Lemmas~\ref{lem:cpm} and~\ref{lem:SL} because $x$, $xy$ are monomials corresponding to a basis in $\Z^2$. Therefore Proposition~\ref{prop:m_main_rel} proves that
$$
\langle x^{-2}|x|x\rangle=1.
$$
Observe that no two unputs here form a basis in $\Z^2$ so this is a new identity justifying Theorem~\ref{th:m_comput}, which does not follow from Lemma~\ref{lem:cpm} directly. 
\end{example}

Theorem~\ref{th:m_comput} will be proved by applying
Proposition~\ref{prop:m_main_rel} in a systematic way to reduce a given balanced input sequence $\xx^{v_1},\ldots,\xx^{v_k}$ to a combination of sequences of the basic form appearing in Lemma~\ref{lem:cpm}. Dimensional stabilisation will be used along the way.

The proof of Proposition~\ref{prop:m_main_rel} is a consequence of the $L_\infty$ relations from Theorem~\ref{th:gr_l_inf} together with a computation of the string Lie bracket on the torus.

\begin{remark}
It is possible to give a different proof not relying on the $L_\infty$ relations or string topology, but using the fact that descendants are well-behaved under Viterbo restriction maps, and using a version of the wall-crossing formula for toric mutations, cf.~\cite{Pa13,Pa14,PT17,To17}. 
\end{remark}
		
\subsection{Lie bracket and a string topology computation}
For this computation, it is convenient to start with non-equivariant symplectic cohomology. Recall that
$$
H_*(\L T^n)\cong H_*(T^n)\otimes\Z[x_1^{\pm 1},\ldots,x_n^{\pm 1}];
$$
this isomorphism is realised as follows. The part $\Z[x_1^{\pm 1},\ldots,x_n^{\pm 1}]$ keeps track of the homology class of the loops in a given cycle, and the part $H_*(T^n)$ is obtained by evaluating at the origin of each loop.
It is convenient to identify
$$
H_*(T^n)\cong\Lambda^{*}(H_1(T^n))\cong  \Lambda^{n-*}(H^1(T^n))
$$
where $\Lambda^*$ is the exterior algebra.
For $a\in \Lambda^{n-*}(H^1(T^n))$ and $u\in H_1(T^n)$, denote the corresponding element of $H_*(\L T^n)$ by
$$
a\otimes \xx^u \in H_*(\L T^n).
$$
Recall that the Chas-Sullivan product 
$$
-\cdot -\co H_*(\L T^n)\otimes  H_*(\L T^n)\to H_{*-n}(\L T^n)
$$
is defined as follows. Consider two cycles $x,y$ of parametrised loops on $L$. One evaluates the loops in the $x$-cycle and in the $y$-cycle at $0\in S^1$, and concatenates them pairwise over the fibre product of these evaluations. For the torus, the product is easily computed:
\begin{equation}
\label{eq:torus_prod}
(a\otimes \xx^u)\cdot (b \otimes \xx^v)=(a\wedge b)\otimes \xx^{u+v}.
\end{equation}
Next recall the BV operator
$$\Delta\co H_*(\L T^n)\to H_{*+1}(\L T^n).$$ It modifies cycles of loops by letting the parametrisations of loops sweep a once-around turn.
One computes in the adopted notation 
\begin{equation}
\label{eq:torus_d}
\Delta(a\otimes \xx^u)=\iota_ua\otimes \xx^u
\end{equation}
where $\iota_ua\in\Lambda^{|a|-1}(H^1(T^n))$ is the  interior product, or the result of insertion.

\begin{lemma}
	\label{lem:bracket_t2}
Consider a 2-form $\Omega\in \Lambda^2(H^1(T^n))$
and vectors $u,v\in H_1(T^n)$. 
For bracket on the non-equivariant Floer complex, it holds that
$$
[\Delta(\Omega\otimes \xx^u),\,\xx^v]=\Omega(u,v)\cdot \xx^{u+v}.
$$
\end{lemma}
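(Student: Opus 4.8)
\subsection*{Proof proposal for Lemma~\ref{lem:bracket_t2}}

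The plan is to transport the statement to string topology and then read it off from the formulas already recorded for $H_*(\L T^n)$. By Corollary~\ref{cor:vit_lie} the Floer bracket $l^2_{neq}$ is intertwined with the Chas--Sullivan bracket, and the Viterbo isomorphism also intertwines the BV operators (it is an isomorphism of BV algebras), so it suffices to verify the identity inside $H_*(\L T^n)$ with its Chas--Sullivan structures, using the notation $a\otimes\xx^u$ from the excerpt. Under $H_*(T^n)\cong\Lambda^{n-*}(H^1(T^n))$, the class $\Omega\in\Lambda^2(H^1(T^n))$ lives in $H_{n-2}(\L T^n)$ in loop class $u$, while $\xx^v=1\otimes\xx^v$ with $1\in\Lambda^0(H^1(T^n))$.

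First I would apply the identity (\ref{eq:delta_bracket}) with $x=\Delta(\Omega\otimes\xx^u)$ and $y=\xx^v$, giving
$$
[x,y]=\Delta(x\cdot y)-\Delta(x)\cdot y-(-1)^{|x|}\,x\cdot\Delta(y).
$$
Two of the three terms vanish at once: $\Delta(x)=\Delta^2(\Omega\otimes\xx^u)=0$ since $\Delta$ is a differential, and $\Delta(y)=\Delta(1\otimes\xx^v)=\iota_v1\otimes\xx^v=0$ by (\ref{eq:torus_d}), because $\iota_v$ annihilates $\Lambda^0(H^1(T^n))$. So only $\Delta(x\cdot y)$ survives, and in particular no Koszul signs enter at this stage.

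It remains to compute $\Delta(x\cdot y)$. By (\ref{eq:torus_d}), $x=\Delta(\Omega\otimes\xx^u)=\iota_u\Omega\otimes\xx^u$ with $\iota_u\Omega\in\Lambda^1(H^1(T^n))$. Then (\ref{eq:torus_prod}) yields $x\cdot y=(\iota_u\Omega\otimes\xx^u)\cdot(1\otimes\xx^v)=\iota_u\Omega\otimes\xx^{u+v}$, and applying (\ref{eq:torus_d}) once more,
$$
\Delta(x\cdot y)=\iota_{u+v}(\iota_u\Omega)\otimes\xx^{u+v}.
$$
Since $\iota_u\Omega$ is a $1$-form, $\iota_{u+v}(\iota_u\Omega)$ is the scalar $\Omega(u,u+v)=\Omega(u,v)$ by skew-symmetry of $\Omega$, so $\Delta(x\cdot y)=\Omega(u,v)\cdot\xx^{u+v}$, which is exactly the asserted formula. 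I do not expect any substantive obstacle here: the entire content is the three explicit formulas (\ref{eq:torus_prod}), (\ref{eq:torus_d}), (\ref{eq:delta_bracket}) together with $\Delta^2=0$. The only point requiring care is pinning down the interior-product and BV sign conventions so the scalar comes out as $+\Omega(u,v)$ rather than $-\Omega(u,v)$; with the conventions fixed in the excerpt this is routine bookkeeping.
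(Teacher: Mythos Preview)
Your proof is correct and follows essentially the same route as the paper: apply the BV identity (\ref{eq:delta_bracket}) to $x=\Delta(\Omega\otimes\xx^u)$ and $y=\xx^v$, kill the second and third terms, and compute the surviving term via (\ref{eq:torus_prod}) and (\ref{eq:torus_d}) to get $\Omega(u,u+v)=\Omega(u,v)$. The only cosmetic difference is your justification for $\Delta(\xx^v)=0$ (you use $\iota_v 1=0$ in $\Lambda^{-1}$, the paper phrases it as $\Delta$ landing in $H_{n+1}(\L T^n)=0$), and your explicit invocation of the Viterbo isomorphism to pass to string topology, which the paper treats as already ambient.
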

\begin{proof}
	Recall that the Chas-Sullivan bracket is expressed in terms of the product and the BV~operator via (\ref{eq:delta_bracket}). Specifically,
	$$
[\Delta(\Omega\otimes \xx^u),\xx^v]=\Delta(\Delta(\Omega\otimes \xx^u)\cdot \xx^v)-\Delta(\Delta(\Omega\otimes \xx^u))\cdot \xx^v-\Delta(\Omega\otimes \xx^u)\cdot \Delta(\xx^v).
$$	
The second term vanishes because $\Delta^2=0$, and the third term vanishes because according to the notation being used, $\xx^v\in H_n(\L L)$ so the image of $\Delta$ lands in $H_{n+1}(\L L)=0$. Therefore
$$
[\Delta(\Omega\otimes \xx^u),\,\xx^v]=\Delta(\Delta(\Omega\otimes \xx^u)\cdot \xx^v).
$$
The computation is continued using (\ref{eq:torus_prod}) and (\ref{eq:torus_d}).
\begin{multline*}
\Delta(\Delta(\Omega\otimes \xx^u)\cdot \xx^v)=\Delta((\iota_u\Omega\otimes \xx^{u})\cdot \xx^v)=\Delta(\iota_u\Omega\otimes \xx^{u+v})
\\
=\Omega(u,u+v)\cdot \xx^{u+v}=\Omega(u,v)\cdot\xx^{u+v}.
\end{multline*}
The result is as claimed.
\end{proof}

\begin{remark}
Before the emergence of string topology,  Goldman \cite{Gol86} discovered a Lie bracket on the space of free loops on a surface. Let $\hat \pi$ be the set of free homotopy classes of oriented unparametrised loops on the genus $g$ surface $L=\Sigma_g$ (i.e.~the set of conjugacy classes of its fundamental group). The Goldman bracket is an operation
$$
[-,-]_G\co \Z[\hat \pi]\otimes \Z[\hat \pi]\to \Z[\hat \pi].
$$
For two transverse unparametrised oriented loops $\alpha$, $\beta$, one puts
$$
[\alpha,\beta]_G=\sum_{p\in\alpha\cap\beta}\epsilon(p)\,\alpha*_p\beta
$$
where $\epsilon(p)$ is the intersection sign and $\alpha*_p\beta$ is the concatenation  of the two loops at $p$. 
From the point of view of string topology,  the Goldman bracket is equivalent to the Chas-Sullivan bracket  on the $S^1$-equivariant loop space homology,
$$
[-,-]\co H_0(\L L/S^1, L)\otimes H_0(\L L/S^1,L)\to H_0^{S^1}(\L L/S^1,L),
$$
The Goldman bracket on the torus admits the following straightforward computation, see \cite{Tab16a,Tab16b,Cha10}.
Identify $\Z^2=H_1(T^2)$ with the set of free homotopy classes of loops on $T^2$; then for any $u,v\in \Z^2$, the Goldman bracket is given by
	$$
	[u,v]_G=(u^1v^2-u^2v^1)(u+v).
	$$
	Here $u^i$, $v^i$ are the co-ordinates of $u$ resp.~$v$. Lemma~\ref{lem:bracket_t2} is a higher-dimensional relative of that. In fact, Lemma~\ref{lem:bracket_t2} will now be recast in  the equivariant context. 
\end{remark}

Returning to the symplectic cohomology bracket on the $n$-torus,  its part
\begin{equation}
\label{eq:lie_1_0}
l_{neq}^2\co SH^1(T^*T^n)\otimes SH^0(T^*T^n)\to SH^{0}(T^*T^n).
\end{equation}
will be of particular interest. Using the Viterbo isomorphism and Corollary~\ref{cor:vit_lie}, Lemma~\ref{lem:bracket_t2} translates into
\begin{equation}
l_{neq}^2(\Delta (\Omega\otimes \xx^u),\,\xx^v)=\Omega(u,v)\cdot \xx^{u+v}
\end{equation}
where
$$
\Omega\otimes \xx^u\in SH^2(T^*T^n),\quad 
\Delta (\Omega\otimes \xx^u)\in SH^1(T^*T^n),\quad
\xx^v\in SH^0(T^*T^n). 
$$
Finally, one translates this computation to the equivariant case.
For this, observe that the elements $\Delta(\Omega\otimes \xx^u)$ and $\xx^v$ from Lemma~\ref{lem:bracket_t2} are realised as  hat-type generators on $CF^*(T^*T^n)$. Indeed, $\xx^v$ is a hat-generator by definition, and the first element has this property because it lies in the image of the BV operator. Relabelling these elements to the corresponding elements of $CH^*_{S^1,+}(T^*T^n)$, in view of (\ref{eq:l_neq_to_eq}) one gets the following identity for the equivariant bracket:

\begin{equation}
\label{eq:l2_comput}
l^2(i^{-1}(\Delta (\Omega\otimes \xx^u)),\,\xx^v)=\Omega(u,v)\cdot \xx^{u+v}.
\end{equation}
Here $i$ is the map $SH^*_{S^1,+}(T^*T^n)\to SH^*_{+}(T^*T^n)$ from the Gysin exact sequence, and the above argument has shown that $\Delta (\Omega\otimes \xx^u)$ lies in the image of $i$.

\begin{proof}[Proof of Proposition~\ref{prop:m_main_rel}]
This follows by combining (\ref{eq:l2_comput}) with the $L_\infty$~relations from Theorem~\ref{th:gr_l_inf}.
Recall that the Floer differential on $CF^*(T^*T^n)$ vanishes so there is a natural identification $CF^*(T^*T^n)=SH^*(T^*T^n)$.
Consider the sequence of elements in $CF^*_{S^1,+}(T^*T^n)$ below:
$$
i^{-1}(\Delta(\Omega\otimes \xx^u)),\ \xx^{v_1},\ \ldots,\ \xx^{v_k}.
$$
They have degrees $1,0,\ldots,0$ respectively. Apply the $L_\infty$ morphism equation from Theorem~\ref{th:gr_l_inf}.
The only non-trivial $l^r$ operations that can be applied to a subcollection of the above inputs are $l^2(i^{-1}(\Delta(\Omega\otimes \xx^u)),\,\xx^{v_i})$, since all other possible applications of an $l^r$ land in negative degree and  vanish.

So by Theorem~\ref{th:gr_l_inf}, 
$$
\sum_{i=1}^k  \tfrac{1}{2!(k-2)!} \langle (l^2(i^{-1}(\Delta(\Omega\otimes \xx^u)),\,\xx^{v_i})|\ldots|\xx^{v_{i-1}}|\xx^{v_{i+1}}|\ldots|\xx^{v_k}\rangle=0.
$$
The common factor may be dropped,
and the first term may be moved to the $i$th position. Finally (\ref{eq:l2_comput}) gives
\begin{multline*}
\sum_{i=1}^k  \langle \xx^{v_1}|\ldots|\xx^{v_{i-1}}|\,l^2(i^{-1}(\Delta(\Omega\otimes \xx^u)),\,\xx^{v_i})\,|\xx^{v_{i+1}}|\ldots|\xx^{v_k}\rangle
\\
=
\sum_{i=1}^k\Omega(u,v_i)\cdot  \langle \xx^{v_1}|\ldots| \xx^{v_{i-1}}|\xx^{v_i+u}|\xx^{v_{i+1}}|\ldots|\xx^{v_k}\rangle
\end{multline*}
which proves Proposition~\ref{prop:m_main_rel}.
\end{proof}
	
	\subsection{Proof of Theorem~\ref{th:m_comput}}
The following shorthand notation will be used:
$$\langle v_1|\ldots |v_k\rangle\coloneqq \langle \xx^{v_1}|\ldots|\xx^{v_k}\rangle_{T^*T^n}.$$
By the stabilisation property from Lemma~\ref{lem:stab}, one may forget the actual value of $n$. It is convenient to think of the inputs $v_1|\ldots |v_k$ as of the columnus of a matrix. Lemma~\ref{lem:stab} says that one can append one or several rows of zeroes to the matrix without changing the invariant:
$$
\langle v_1|\ldots |v_k\rangle=
\left\langle 
\begin{array}{c|c|c}
v_1&\ldots& v_k\\
0&\ldots&0
\end{array}
\right\rangle.
$$

Instead of appending a rows of zeroes, now consider appending the row $(1,0,\ldots,0)$:
$$
\begin{array}{c|c|c|c}
v_1&v_2&\ldots& v_k\\
1&0&\ldots&0
\end{array}.
$$
The next step is to apply Proposition~\ref{prop:m_main_rel} taking the columns of the above matrix as the vectors $v_i$ in the statement of Proposition~\ref{prop:m_main_rel}. It remains to specify which $\Omega$ and $u$ to use.

Take $u=(0,\ldots,0,-1)$ to be the basic vector corresponding to the last coordinate; call the corresponding variable $z_1$ for convenience.
Take the 2-form $\Omega$ to be $dz_1\wedge \alpha$ where $\alpha$ is a 1-form chosen generically so that $\alpha(v_1)\neq 0$.
Proposition~\ref{prop:m_main_rel}
yields the following:
\begin{equation}
\label{eq:proof_1}
\alpha(v_1)
\left\langle 
\begin{array}{c|c|c}
v_1&\ldots& v_k\\
0&\ldots&0
\end{array}
\right\rangle
+
\sum_{i=2}^k
\alpha(v_i)
\left\langle 
\begin{array}{c|c|c|c|c|c}
v_1&v_2&\ldots&v_i&\ldots& v_k\\
1&0&\ldots&-1&\ldots &0
\end{array}
\right\rangle.
\end{equation}

Suppose one has shown that
\begin{equation}
\label{eq:proof_2}
\left\langle 
\begin{array}{c|c|c|c|c|c}
	v_1&v_2&\ldots&v_i&\ldots& v_k\\
	1&0&\ldots&-1&\ldots &0
\end{array}
\right\rangle=(k-2)!
\end{equation}
for all $i\ge 2$.
Since $v_1+\ldots+v_k=0$, it  follows from (\ref{eq:proof_1}) that 
$$
\left\langle 
\begin{array}{c|c|c}
v_1&\ldots& v_k\\
0&\ldots&0
\end{array}
\right\rangle=(k-2)!
$$
as desired.

To compute the left hand side of (\ref{eq:proof_2}),
consider the linear transformation defined by $z\mapsto z\xx^{-v_1}$. By Lemma~\ref{lem:SL},
\begin{equation}
\label{eq:proof_3}
\left\langle 
\begin{array}{c|c|c|c|c|c}
v_1&v_2&\ldots&v_i&\ldots& v_k\\
1&0&\ldots&-1&\ldots &0
\end{array}
\right\rangle
=
\left\langle 
\begin{array}{c|c|c|c|c|c}
0&v_2&\ldots&v_i+v_1&\ldots& v_k\\
1&0&\ldots&-1&\ldots &0
\end{array}
\right\rangle.
\end{equation}
One may redenote $v_i+v_1$ simply by $v_i$, and reorder the  inputs so that $v_i$ becomes $v_2$. The task is now to show that
\begin{equation}
\label{eq:proof_4}
\left\langle 
\begin{array}{c|c|c|c|c}
0&v_2&v_3&\ldots& v_k\\
1&-1&0&\ldots &0
\end{array}
\right\rangle=(k-2)!
\end{equation}
for all vectors $\{v_i\}_{i= 2}^n$ such that $\sum_{i= 2}^n v_i=0$. Here $v_i\neq 0$ for $i\ge 3$, but $v_2$ may be zero.

There are two possible cases: $v_2=0$ and $v_2\neq 0$. Consider the case $v_2\neq 0$ first.
One repeats a version of the above procedure, starting with $v_2$. One adds an extra row $(0,1,0,\ldots)$  to the matrix:
$$
\begin{array}{c|c|c|c|c}
0&v_2&v_3&\ldots& v_k\\
1&-1&0&\ldots&0\\
0&1&0&\ldots&0
\end{array}.
$$
Again apply Proposition~\ref{prop:m_main_rel} taking the columns as the vectors $v_i$ in the Proposition.
Take $u=(0,\ldots,0,-1)$ to be the basic vector corresponding to the (new) last coordinate; call the corresponding variable $z_2$.
Take the 2-form $\Omega$ to be $dz_2\wedge \alpha$ where $\alpha$ is a 1-form such that $\alpha(v_2)\neq 0$. The fact that $v_2\neq 0$ is being used here.

Proposition~\ref{prop:m_main_rel}
yields the following:
\begin{equation}
\label{eq:proof_5}
\alpha(v_2)
\left\langle 
\begin{array}{c|c|c|c|c}
0&v_2&v_3&\ldots& v_k\\
1&-1&0&\ldots&0\\
0&0&0&\ldots&0
\end{array}
\right\rangle
+
\sum_{i=3}^k
\alpha(v_i)
\left\langle 
\begin{array}{c|c|c|c|c|c|c}
0&v_2&v_3&\ldots& v_i&\ldots& v_k\\
1&-1&0&\ldots&0&\ldots \\
0&1&0&\ldots&-1&\ldots& 0
\end{array}
\right\rangle.
\end{equation}
The first summand is the one appearing in (\ref{eq:proof_4}).
Using the fact that $\alpha(v_2)\neq 0$, to establish~(\ref{eq:proof_4})
it suffices to treat the other summands and prove:
\begin{equation}
\label{eq:proof_6}
\left\langle 
\begin{array}{c|c|c|c|c|c|c}
0&v_2&v_3&\ldots& v_i&\ldots& v_k\\
1&-1&0&\ldots&0&\ldots \\
0&1&0&\ldots&-1&\ldots& 0
\end{array}
\right\rangle
=(k-2)!
\end{equation}
for all $i\ge 3$. Now consider the linear change of coordinates given by $z_2\mapsto z_2z_1\xx^{-v_2}$.
This transforms (\ref{eq:proof_6})  to
\begin{equation}
\label{eq:proof_7}
\left\langle 
\begin{array}{c|c|c|c|c|c|c}
0&0&v_3&\ldots& v_i+v_2&\ldots& v_k\\
1&0&0&\ldots&-1&\ldots \\
0&1&0&\ldots&-1&\ldots& 0
\end{array}
\right\rangle
\end{equation}
Again, redenoting $v_i+v_2$ to $v_i$ and reordering so that $v_i$ becomes $v_3$ reduces the task to showing that
\begin{equation}
\label{eq:proof_8}
\left\langle 
\begin{array}{c|c|c|c|c|c}
0&0&v_3&v_4&\ldots & v_k\\
1&0&-1&0&\ldots&0 \\
0&1&-1&0&\ldots& 0
\end{array}
\right\rangle=(k-2)!
\end{equation}
for all vectors $\{v_i\}_{i= 3}^n$ such that $\sum_{i= 3}^n v_i=0$. Here $v_i\neq 0$ for $i\ge 4$, but $v_3$ may be zero.
The pattern is clear: if $v_3$ is non-zero, one continues analogously. For example, after one more iteration one reduces to descendants of the form
$$
\left\langle 
\begin{array}{c|c|c|c|c|c|c}
	0&0&0&v_4&v_5&\ldots & v_k\\
	1&0&0&-1&0&\ldots&0 \\
	0&1&0&-1&0&\ldots& 0\\
	0&0&1&-1&0&\ldots &0
\end{array}
\right\rangle
$$

On the other hand if, at one of these steps the vector $v_2$, or $v_3$, or $v_4$ etc.~happens to be zero, one simply ignores its column, moves one column right, and continues the procedure. For example, assuming $v_4=0$ in the matrix just above,
$$
\begin{array}{c|c|c|c|c|c|c}
0&0&0&0&v_5&\ldots & v_k\\
1&0&0&-1&0&\ldots&0 \\
0&1&0&-1&0&\ldots& 0\\
0&0&1&-1&0&\ldots &0
\end{array}
$$
one repeats the procedure starting from the column of $v_5$.

Iterating this procedure reduces the computation to the basic inputs which are the columns of a matrix of the following form, where all empty entries are zero, and the identically zero rows are omitted:
$$
\begin{array}{|c c c c c| c c c c c| c c c c c c }
\hline
1&&\ldots&&-1&&&&&&& \\
&1&\ldots&&-1&&&&&&& \\ 
&&&&&&&&&&& \\
&&\ldots&1 &-1 &&&&&&\\
\hline
&&&& &1&&\ldots&&-1&& \\
&&&&&&1&\ldots&&-1&& \\ 
&&&&&&&&&&&\\
&&&&&&&\ldots&1 &-1 & &\\
\hline
&&&&&&&& & & 1&\ldots
\end{array}
$$
These are precisely the inputs appearing in Lemma~\ref{lem:cpm}, and  Theorem~\ref{th:m_comput} follows.
	\bibliography{Symp_bib}{}
	\bibliographystyle{plain}

\end{document}